\DeclareSymbolFont{yhlargesymbols}{OMX}{yhex}{m}{n} 
\DeclareMathAccent{\yhwidehat}{\mathord}{yhlargesymbols}{"62}
\definecolor{deepblue}{rgb}{0,0,0.5}
\definecolor{deepred}{rgb}{0.6,0,0}
\definecolor{deepgreen}{rgb}{0,0.5,0}
\definecolor{DarkBlue}{rgb}{0.00,0.00,0.55}
\definecolor{Black}{rgb}{0.00,0.00,0.00}
	\def\({}
	\def\){}
\newcommand{\X}{{\check{\bm{x}}}}
\newcommand{\x}{{\bm{x}}}
\newcommand{\dx}{\text{ d}\x}
\newcommand{\dX}{\text{ d}\X}
\newcommand{\K}{{\check{K}}}
\newtheorem{theorem}{Theorem}[section]
\newtheorem{lemma}[theorem]{Lemma}
\newtheorem{corollary}[theorem]{Corollary}
\theoremstyle{definition}
\newtheorem{definition}{Definition}[section]
\newtheorem{example}{Example}[section]
\theoremstyle{remark}
\newtheorem{remark}{Remark}[section]
\newtheorem{assumption}{Assumption}[section]
\definecolor{myblue}{RGB}{135, 206, 250}
\providecommand{\keywords}[1]{\textbf{\textit{Keywords:}} #1}
\title{Mixed-precision finite element kernels and assembly:\\Rounding error
analysis and hardware acceleration}
\author{Matteo Croci\footnote{BCAM, Basque Center for Applied Mathematics,
Bilbao, Spain \& Ikerbasque, Basque Foundation for Science, Bilbao, Spain
Email: \texttt{mcroci@bcamath.org}.}\hspace{3pt}  \and Garth N.
Wells\footnote{Department of Engineering, University of Cambridge, UK. Email:
\texttt{gnw20@cam.ac.uk}.}}
\date{}
\begin{document}

\maketitle

\begin{abstract}
    In this paper we develop the first fine-grained rounding error analysis of
    finite element (FE) cell kernels and assembly. The theory includes
    mixed-precision implementations and accounts for hardware-acceleration via
    matrix multiplication units, thus providing theoretical guidance for
    designing reduced- and mixed-precision FE algorithms on CPUs and GPUs.
    Guided by this analysis, we introduce hardware-accelerated mixed-precision
    implementation strategies which are provably robust to low-precision
    computations. Indeed, these algorithms are accurate to the lower-precision
    unit roundoff with an error constant that is independent from: the
    conditioning of FE basis function evaluations, the ill-posedness of the
    cell, the polynomial degree, and the number of quadrature nodes.
    Consequently, we present the first AMX-accelerated FE kernel implementations
    on Intel Sapphire Rapids CPUs.  Numerical experiments demonstrate that the
    proposed mixed- (single/half-) precision algorithms are up to 60 times faster
    than their double precision equivalent while being orders of magnitude more
    accurate than their fully half-precision counterparts.
\end{abstract}

\keywords{Mixed precision, finite
element method, finite element kernel and assembly, rounding error analysis,
hardware acceleration, matrix units, Intel AMX.}

\section{Introduction}
\label{sec:intro}
Hardware accelerators such as graphical processing units (GPUs) have been
ubiquitously employed to accelerate finite element (FE) computations, leading to
orders-of-magnitude speedups \cite{fu2014architecting, reguly2015finite,
dziekonski2013generation, trotter2023targeting, banas2014numerical}. These
improvements arise from the parallelization capabilities of GPUs and their
efficiency at performing computations in reduced- and mixed-precision
arithmetic.  However, there is no comprehensive rounding error analysis of
the FE method that can guide reduced-precision implementations and ensure
their accuracy. In this paper we develop the first fine-grained rounding
error analysis of FE cell kernels and assembly which includes
mixed-precision implementations and hardware accelerators.  This theory
establishes the accuracy of mixed-precision hardware-accelerated FE kernels
and assembly and can act as a guide for algorithmic design. Furthermore, we propose
a strategy for mixed-precision hardware accelerated FE kernel
implementations. We show that for high polynomial degrees these kernels are
not just faster than double precision kernels, but are also
orders of magnitude more accurate than their reduced-precision equivalent.
Therefore, this work is a stepping stone towards comprehensive
reduced-/mixed-precision FE implementations and their complete rounding
error analysis.

Artificial intelligence and machine learning applications have been driving the
development of computer chips that are particularly efficient at performing
reduced- and mixed-precision computations\footnote{Parallelization is also an important
aspect, but it is not the focus of this work.}. The list includes GPUs,
tensor/intelligence/neural processing units (TPUs/IPUs/NPUs), field-programmable
gate arrays (FPGAs), and others. Key components of these chips are
matrix multiplication units%
\footnote{The naming convention of accelerators and matrix
    units is not standardized and dictated by marketing choices. For instance,
    we have tensor cores (NVIDIA), matrix cores (AMD), matrix multiply units
(Google TPUs), advanced matrix extensions units (Intel), etc.}
which perform fused-matrix-multiply-add instructions in reduced- or mixed-precision
with a higher computational throughput than standard (e.g.,
vectorized) operations. For instance, NVIDIA H100 GPU half-precision tensor
cores are over 32 times faster than standard double precision computations%
\footnote{See \url{https://developer.nvidia.com/blog/nvidia-hopper-architecture-in-depth/}.}.
However, these chips are specifically tailored to graphics and machine learning
workloads and exploiting these accelerators in broader scientific computing
applications is an active field of investigation \cite{abdelfattah2021survey,
higham2022mixed}.

Hardware-accelerated reduced- and mixed-precision scientific computing poses two
challenges: 1) Computational bottlenecks must be offloaded, if possible, onto
matrix-matrix multiplications (which are relatively inexpensive).  2) Rounding
errors must be kept under control to ensure the accuracy of computations.
Algorithmic advances thus require both a careful implementation and a thorough
rounding error analysis.

The advantages of accelerating matrix-matrix multiplications via reduced- and
mixed-precision matrix multiplication units have been widely investigated in the
numerical linear algebra literature \cite{fasi2021numerical,
abdelfattah2021survey, higham2022mixed, lopez2023mixed, haidar2020mixed,
lewis2022large, fasi2023matrix, blanchard2020mixed}.  However, these
implementation strategies have not been fully incorporated within FE
computations. Indeed, multiple studies \cite{fu2014architecting,
reguly2015finite, pazner2023end, dziekonski2013generation, beams2020high,
andrej2024high, trotter2023targeting, banas2014numerical, cecka2011assembly,
remacle2016gpu, knepley2013finite} (the reference lists are in no way
exhaustive) have presented GPU-accelerated FE algorithms, but these mainly
exploit GPUs for parallelization and focus less on rounding error implications.
For instance, the cited FE works do not employ half-precision floating-point
arithmetic and do not mention tensor cores\footnote{Some references
\cite{reguly2015finite, knepley2013finite, trotter2023targeting,
beams2020high} mention or use cuBLAS which, among other functionalities,
implements tensor-core-accelerated matrix-matrix multiplications. It is not
stated in these papers whether these features have been exploited.}.

Similarly, the rounding error analysis of the FE method is limited.  Most of the
available studies \cite{maryvska2000schur, fried1971discretization,
alvarez2012round, babuvska2018roundoff} exclusively focus on rounding errors
arising from the linear solver and from the conditioning of FE linear systems.
The only exceptions are two studies (\cite{utku1984solution} and Section 8.5 in
\cite{croci2022stochastic}) that focus on
timestepping for parabolic problems and two more by Melosh and collaborators
which analyze rounding errors in one of the first instances of the FE method:
the FE analysis of structures \cite{melosh1969manipulation,
melosh1973inherited}.  There is no existing work on any other component of the
FE method, including FE kernels and assembly.  Therefore, it is
theoretically unclear whether FE computations in reduced- and mixed-precision can
attain enough accuracy for practical purposes.  On the other hand, there is a
wide breadth of work on reduced- and mixed-precision numerical linear algebra
algorithms (see \cite{abdelfattah2021survey, higham2022mixed} and the references
therein) as well as multiple studies on reduced-/mixed-precision timestepping
\cite{burnett2024stability, croci2023effects,grant2022perturbed,
burnett2024stability, balos2023leveraging, croci2022mixed, klower2022fluid} and
Newton's method \cite{tisseur2001newton, kelley2022newton}. These results,
together with classical rounding error theory \cite{higham2002accuracy,
wilkinson2023rounding}, can be used as building blocks to derive a comprehensive
rounding error analysis of the FE method. Such an analysis can then guide
reduced- and mixed-precision hardware-accelerated implementations and the design
of mixed-precision iterative solvers and related preconditioning strategies.

In this paper we take the first step in this direction by developing a
fine-grained rounding error analysis of reduced- and mixed-precision FE cell
kernels and assembly. In particular, we make the following new contributions:
\begin{itemize}
    \item We derive the first rounding error analysis of FE kernels and
        assembly. This theory includes basis function tabulation, geometry
        computations, FE function and coefficient evaluations, and the
        evaluation of quadrature rules. Furthermore, it accounts for
        mixed-precision implementations and mixed-precision hardware
        accelerators, thus providing theoretical guidance for designing reduced-
        and mixed-precision FE algorithms. For the sake of brevity, the analysis
        is presented for Lagrange FE and mass and Poisson forms (both bilinear
        forms and actions) on simplices and boxed-cells (e.g., hexahedra), but
        should qualitatively apply to more general settings.

    \item We present a new mixed-precision FE kernel implementation strategy
        which employs: a) higher precision for geometry computations and FE
        tabulation, b) lower precision for storage, and c) mixed-precision for
        FE function evaluations and for summing together quadrature node
        contributions. We prove that these kernels are accurate to the machine
        precision of the lower precision format with a small error constant that
        is independent to leading order from: the conditioning of FE basis
        function evaluations, the ill-posedness of the cell, the polynomial
        degree, and the number of quadrature nodes.  Conversely, we prove that
        the error constant is not independent from these quantities in fully
        reduced-precision implementations.

    \item We provide the first hardware-accelerated FE kernel implementations
        using mixed-precision fp32/bf16 Intel advanced matrix extension (AMX)
        units on Intel CPUs. For high-degree polynomials, AMX acceleration leads
        to a speedup factor of up to $60\times$ with respect to double-precision
        vectorized computations for mass forms and up to $15\times$ for Poisson
        forms. In contrast, we show that vectorized reduced- (fp16) and mixed-
        (fp32/bf16) precision kernels are never more than $4$-$6$ times
        faster than double precision.  Numerical results on a single CPU core
        support our rounding error analysis and thus provenly demonstrate that
        for high-degree polynomials mixed-precision hardware-accelerated kernels
        are faster than double precision kernels and orders of magnitude more
        accurate than their corresponding low-precision implementations. These
        results also show that it is possible to evaluate FE kernels robustly at
        half-precision accuracy.

    \item As a side theoretical contribution, we provide the first rounding
        error analysis of the evaluation of multivariate Lagrange polynomials
        and their gradients on simplicies (and boxed cells). The analysis shows
        that multivariate Lagrange polynomials and their derivatives can be
        evaluated to high absolute and relative (absolute only for derivatives)
        pointwise accuracy. We highlight that the rounding error analysis of the
        evaluation of 1D Lagrange polynomials and generic multivariate
        polynomials is well-known (cf.~\cite{higham2004numerical} and
        \cite{pena2000multivariate} respectively). 
\end{itemize}

We remark that while our rounding error analysis and implementation strategies
are not specific to a given set of floating-point formats, matrix multiplication
units often employ half precision which may limit the ultimate accuracy of the
FE method. However, this is not necessarily problematic: Often in computational
sciences and engineering a few digits of accuracy is all is needed.
Furthermore, half-precision-accurate FE kernels and assembly can still
accelerate computations in applications that warrant higher accuracy. Indeed,
there is a wide range of mixed-precision techniques which obtain high-precision
accuracy at low-precision costs by exploiting fast reduced-precision accurate
computations.  The list includes: mixed-precision iterative refinement methods
\cite{amestoy2024five, haidar2020mixed, carson2023mixed, oktay2022multistage},
mixed-precision Newton \cite{tisseur2001newton, kelley2022newton},
mixed-precision multigrid \cite{mccormick2021algebraic,
tamstorf2021discretization}, and mixed-precision iterative solvers and
preconditioning strategies \cite{carson2023mixed, georgiou2023mixed,
flegar2021adaptive, abdelfattah2021survey}. We leave the investigation of how
these techniques can be combined with mixed-precision FE kernels and assembly
to future work.

The paper is structured as follows: Section \ref{sec:background} contains a
brief introduction to rounding error analysis and hardware accelerators. In
Section \ref{sec:local_kernel_algorithm_and_assembly} we introduce the forms,
cell kernels, and assembly algorithms we consider in the paper. In Section
\ref{sec:rounding_error_analysis} we develop a comprehensive rounding error
analysis of these algorithms which accounts for mixed-precision implementations
and hardware accelerators. Numerical results supporting the theory are
presented in Section \ref{sec:numerical_results}. Finally, Section \ref{sec:conclusions}
contains our concluding remarks.

\section{Background}
\label{sec:background}

We begin this section by describing the notation adopted throughout the paper.
Then, we provide an introduction to rounding error analysis and to the theoretical
results from numerical linear algebra to be used as building blocks
for the theory developed in Section \ref{sec:rounding_error_analysis}. Finally,
we provide some background on mixed-precision hardware accelerators in
Section \ref{subsec:background_on_accelerators}.

\subsection{Notation}
In this paper, we adopt the following notation:
\begin{itemize}
    \item \textbf{Constants.} For simplicity of notation we indicate with $c$ a
        generic constant and we use numbers to denote different constants
        appearing in the same equation, e.g., $c_1$, $c_2$, etc. Constants with
        the same name appearing in different equations do not necessarily share
        the same value. We use $a \lesssim b$ to indicate that $a \leq cb$ for a
        constant $c>0$. We indicate dimension-dependent constants with $c(d)$.

    \item \textbf{Scalars, vectors, matrices, and tensor.} We indicate scalars
        with lower-case letters (e.g., $a$, $b$, etc.), column vectors with
        lower-case bold letters (e.g., $\bm{a}$, $\bm{b}$, etc.), and matrices
        and tensors with capitals. The only exception to this rule is for
        variables which are of higher tensor order in the Poisson kernels than
        in the mass kernels. In this case we use the higher tensor order
        notation for both for simplicity (e.g., if a variable is a matrix for
        Poisson and a vector for the mass form we use an upper case letter).  We
        employ the following partial indexing notation for tensors: if, e.g.,
        $A\in\mathbb{R}^{m\times n \times q \times r}$ is a fourth-order tensor,
        we indicate with $A_i\in \mathbb{R}^{n \times q \times r}$ and
        $A_{ij}\in \mathbb{R}^{q \times r}$ respectively the third- and
        second-order tensors obtained after fixing the first dimensions.

    \item \textbf{Linear algebra.} For a matrix $A$ or a column vector $\bm{b}$
        we indicate with $A^T$ and $\bm{b}^T$ their transposes and with $\lVert
        A \rVert_p$ and $\lVert \bm{b} \rVert_p$ their $p$ norm respectively.
        For a set $\{A^k\}_{k=1}^n$ of square matrices we indicate with
        $\textnormal{diag}_{k=1}^n(A^k)$ the block diagonal matrix whose $k$-th
        block is given by $A^k$. For a set $\{\bm{v}^k\}_{k=1}^n$ of column
        vectors we denote with $\textnormal{vstack}_{k=1}^n(\bm{v}^k)$ the
        column vector obtained by vertically stacking the column vectors. For a
        set of scalars $\{a_k\}_{k=1}^n$ we indicate with $[a_k]_{k=1}^n$ the
        column vector $\bm{a}$ such that its $k$-th entry is given by $a_k$.

    \item \textbf{Rounding errors.} We denote with $u$ a unit roundoff and we
        possibly use subscripts to distinguish between the units roundoff of
        different precisions and floating-point formats (cf.~Table \ref{tab:precision}).
	We indicate with $\delta$ single roundoffs which
        satisfy $|\delta|\leq u$. We denote with $\theta_n$
        generic $O(nu)$ rounding error terms which we define in Lemma
        \ref{lemma:higham_theta_gamma}. Different $\theta_n$ terms need not have
        the same value, but all $\theta_n$ terms are bounded by $|\theta_n|\leq
        \gamma_n\leq cu$ (also defined in Lemma \ref{lemma:higham_theta_gamma}).
        We will use superscripts to denote $\delta$, $\theta_n$, $\gamma_n$
        terms which arise from calculations performed in different precisions.
        Finally, for any quantity $a$ we indicate with $\hat{a}$ the result of
        its finite-precision computation and with $\Delta a$ we denote the
        corresponding rounding error: $\Delta a := \hat{a}-a$.

    \item \textbf{Condition numbers.} We denote condition numbers of numerical
        computations with the Greek letter $\kappa$. Additionally, for a square
        matrix $A$, we indicate with $\kappa_p(A)$ the $p$-norm condition
        number of $A$.

    \item \textbf{Domain and mesh}. Let $D\subset\mathbb{R}^d$ be an open
        polyhedral domain in $\mathbb{R}^d$. For any such $D$, we denote with
        $D_h$ a non-degenerate tessellation of $D$ made of either simplices or
        boxed cells (e.g., quadrilaterals or hexahedra). We denote with $K\in
        D_h$ a cell of $D_h$ and with $\K$ the reference cell.

\end{itemize}

\subsection{Background on rounding error analysis}
\label{subsec:background_on_rounding_error_analysis}

\begin{table}
	\centering
	\begin{tabular}{@{}llllcc@{}}
		\toprule
		\multicolumn{1}{c}{Format} & \multicolumn{1}{c}{$u$} & \multicolumn{1}{c}{$x_{\min}$} & \multicolumn{1}{c}{$x_{\max}$} & $t$ & exponent bits \\ \midrule
		bf16/bfloat16 (half)                   & $3.91\times 10^{-3}$    & $1.18\times 10^{-38}$          & $3.39\times 10^{38}$           & $8$         & $8$      \\
		fp16 (half)                       & $4.88\times 10^{-4}$    & $6.10\times 10^{-5}$           & $6.55\times 10^{4}$            & $11$        & $5$      \\
		fp32 (single)              & $5.96\times 10^{-8}$    & $1.18\times 10^{-38}$          & $3.40\times 10^{38}$           & $24$        & $8$      \\
		fp64 (double)              & $1.11 \times 10^{-16}$  & $2.22\times 10^{-308}$         & $1.80\times 10^{308}$          & $53$        & $11$     \\ \bottomrule
	\end{tabular}
	\caption{\textit{Overview of commonly used floating point systems. Here $u=2^{-t}$ is the roundoff unit, $x_{\min}$ in the smallest normalized positive number, $x_{\max}$ is the largest finite number and $t$ is the precision. While bf16 and fp32 have the same range and exponent bits, fp16 has a smaller roundoff unit (higher precision) at the cost of a smaller range.}}
	\label{tab:precision}
	\vspace{-6pt}
\end{table}

There are multiple floating-point number formats available. In
this paper we only use four, which are listed in Table \ref{tab:precision}
together with their properties. We start by giving some basic definitions:

\begin{definition}
    The terms \emph{reduced precision} and \emph{low precision} denote the use of
    floating-point number formats with larger unit roundoffs (e.g., fp16 or
    bf16), in contrast to \emph{high precision} formats which are more
    accurate (e.g., fp32 or fp64).
\end{definition}

\begin{definition}
    \emph{Reduced- and high-precision algorithms} are algorithms which are run
    entirely in the same precision (low or high respectively) as opposed to
    \emph{mixed-precision algorithms} that employ different formats. Typically a
    mixed-precision algorithm employs both high- and low-precision
    computations.
\end{definition}

Next, we describe the standard floating point error model which applies to
round-to-nearest computations (cf.~Chapter 2 of \cite{higham2002accuracy}):
\begin{align}
\label{eq:std_floating_point_error_model}
\widehat{(a \text{ op } b)} = (a \text{ op } b)(1 + \delta),\quad |\delta| <
u,\quad\text{op}\in\{+,-,\times,\backslash\}.
\end{align}
Here $u$ is the roundoff unit and $\delta$ is called a roundoff error. In the
above and throughout the paper we use hats to denote quantities that are the
result of finite precision computations. By using the model in
\eqref{eq:std_floating_point_error_model} it is possible to derive \emph{a
priori} rounding error bounds for a variety of different algorithms and
operations \cite{higham2002accuracy}.

Products and ratios of multiple $(1+\delta)$ terms often arise in rounding error
analysis. It is thus advantageous to simplify the notation by
invoking the following result:
\begin{lemma}[Lemma 3.1 in \cite{higham2002accuracy}]
    \label{lemma:higham_theta_gamma}
        If $|\delta_i|\leq u$ and $\rho_i=\pm 1$ for $i=1,\dots,n$ and $nu<1$,
        then
        \begin{align}
            \prod_{i=1}^n(1+\delta_i)^{\rho_i}=(1+\theta_n),\quad\text{where}\quad
            |\theta_n|\leq \frac{nu}{1-nu}:=\gamma_n.
        \end{align}
        If $u$ is sufficiently small, then there exists $c \geq 1$ such that
        $\gamma_n\leq cu$.
\end{lemma}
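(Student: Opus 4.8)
The plan is to establish the factorization $\prod_{i=1}^n (1+\delta_i)^{\rho_i} = 1+\theta_n$ together with the bound $|\theta_n|\le\gamma_n$ by induction on $n$, peeling off one factor at a time, and then to read off $\gamma_n\le cu$ directly from the definition of $\gamma_n$ for $u$ small. For the base case $n=1$ I would split on the sign of $\rho_1$: when $\rho_1=+1$ take $\theta_1=\delta_1$, and when $\rho_1=-1$ take $\theta_1=-\delta_1/(1+\delta_1)$, which is well defined since $u<1$ forces $|1+\delta_1|\ge 1-u>0$; in both cases $|\theta_1|\le u/(1-u)=\gamma_1$.

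For the inductive step, assume $\prod_{i=1}^{n-1}(1+\delta_i)^{\rho_i}=1+\theta_{n-1}$ with $|\theta_{n-1}|\le\gamma_{n-1}$, and multiply through by $(1+\delta_n)^{\rho_n}$, absorbing the new factor into a single term $\theta_n$. If $\rho_n=+1$ this gives $\theta_n=\theta_{n-1}+\delta_n+\theta_{n-1}\delta_n$ and hence $|\theta_n|\le\gamma_{n-1}(1+u)+u$; if $\rho_n=-1$ it gives $\theta_n=(\theta_{n-1}-\delta_n)/(1+\delta_n)$ and hence $|\theta_n|\le(\gamma_{n-1}+u)/(1-u)$. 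It then remains to check that each right-hand side is at most $\gamma_n=nu/(1-nu)$. Substituting $\gamma_{n-1}=(n-1)u/(1-(n-1)u)$ and clearing the positive denominators $1-u$, $1-(n-1)u$, $1-nu$ (all positive since $nu<1$ and $n\ge 1$), both reduce to elementary polynomial inequalities valid for all $n$ with $nu<1$: the $\rho_n=+1$ case collapses to the identity $\gamma_{n-1}(1+u)+u=nu/(1-(n-1)u)$, which is then $\le\gamma_n$ because $1-(n-1)u\ge 1-nu$, and the $\rho_n=-1$ case reduces to $(n-1)u^2\ge 0$.

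For the final sentence I would observe that $\gamma_n/u=n/(1-nu)\to n$ as $u\to 0$, so as soon as $u$ is small enough that $nu\le 1/2$ one has $\gamma_n\le 2nu$, and $c:=2n\ge 1$ works; more generally, if $nu\le\eta$ for a fixed $\eta\in(0,1)$ then $\gamma_n\le(n/(1-\eta))\,u$, so $c=n/(1-\eta)$ suffices.

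I expect no serious obstacle here: the argument is a clean single-variable induction, and the ordering of the $\rho_i$ is immaterial since factors are removed one at a time, so mixed signs need no separate treatment. The only step requiring a little care is the $\rho_n=-1$ inductive bound $(\gamma_{n-1}+u)/(1-u)\le\gamma_n$, which is not obvious by inspection and needs the short cross-multiplication check described above; everything else is routine bookkeeping.
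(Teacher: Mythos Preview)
Your induction argument is correct and is essentially the standard proof of this result. Note, however, that the paper does not actually prove this lemma: it is quoted verbatim as Lemma~3.1 from Higham's \emph{Accuracy and Stability of Numerical Algorithms} and used as a black box, so there is no ``paper's own proof'' to compare against. Your write-up is precisely the argument Higham gives, so nothing further is needed.
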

We will use the $\theta_n$ and $\gamma_n$ notations throughout the paper. We
remark that $\theta_n$ indicates a generic error term and two $\theta_n$ terms
need not be the same value unless they arise from exactly the same computation.
Nevertheless, all error terms indicated with $\theta_n$ are always bounded in
absolute value by $\gamma_n$. Manipulation of multiple $\theta_n$ and $\gamma_n$
terms is made easier thanks to the following lemma:
\begin{lemma}[Lemma 3.3 in \cite{higham2002accuracy}]
    \label{lemma:higham_manipulation_theta_gamma}
    For any positive integer $k$ let $\theta_k$ denote a quantity bounded
    according to $|\theta_k|\leq \gamma_k=ku/(1-ku)$. The following relations
    hold:
    \begin{align*}
        (1+\theta_k)(1+\theta_j) = 1 + \theta_{k+j},\quad
        i\gamma_k \leq \gamma_{ik},\quad
        \gamma_k + u \leq \gamma_{k+1},\quad
        \gamma_k + \gamma_j + \gamma_k\gamma_j \leq \gamma_{k+j}.
    \end{align*}
\end{lemma}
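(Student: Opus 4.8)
The plan is to prove the four identities directly from the defining relations $|\theta_k|\le\gamma_k$ and $\gamma_k = ku/(1-ku)$, carrying out the fourth identity first because the first one is an easy consequence of it, while the two middle ones are independent elementary manipulations. Throughout I would keep track of the implicit requirement that every index appearing in a $\gamma$ subscript, multiplied by $u$, stays below $1$, so that all $\gamma$ quantities are defined and all denominators $1-ku$, $1-ju$, $1-(k+j)u$, $1-(k+1)u$ are positive; this is the only hypothesis that is ever used beyond pure algebra, and it is automatically met in every application since $u$ is tiny and the indices are bounded by the problem size.

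For $\gamma_k+\gamma_j+\gamma_k\gamma_j\le\gamma_{k+j}$ the key observation is that $1+\gamma_k = 1/(1-ku)$, which is immediate from the definition. Hence $(1+\gamma_k)(1+\gamma_j) = 1/[(1-ku)(1-ju)]$, and since $\gamma_k+\gamma_j+\gamma_k\gamma_j = (1+\gamma_k)(1+\gamma_j)-1$ and $\gamma_{k+j} = 1/(1-(k+j)u) - 1$, the claimed inequality is equivalent to $(1-ku)(1-ju)\ge 1-(k+j)u$. Expanding the left-hand side gives $1-(k+j)u+kju^2$, so the inequality reduces to the trivially true statement $kju^2\ge 0$. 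With this in hand, the first identity $(1+\theta_k)(1+\theta_j)=1+\theta_{k+j}$ follows by expanding the product as $1+(\theta_k+\theta_j+\theta_k\theta_j)$ and estimating $|\theta_k+\theta_j+\theta_k\theta_j|\le|\theta_k|+|\theta_j|+|\theta_k||\theta_j|\le\gamma_k+\gamma_j+\gamma_k\gamma_j\le\gamma_{k+j}$, so the bracketed quantity is a legitimate $\theta_{k+j}$ term.

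The remaining two relations are direct computations. For $i\gamma_k\le\gamma_{ik}$ with $i$ a positive integer, I would write $i\gamma_k = iku/(1-ku)$ and $\gamma_{ik} = iku/(1-iku)$; since $i\ge 1$ forces $1-iku\le 1-ku$ with both denominators positive, dividing the common positive numerator $iku$ by the smaller denominator yields the larger value, i.e.\ $\gamma_{ik}\ge i\gamma_k$. For $\gamma_k+u\le\gamma_{k+1}$, I would put $\gamma_k+u$ over the common denominator $1-ku$, obtaining $[(k+1)u-ku^2]/(1-ku)$, and compare with $\gamma_{k+1}=(k+1)u/(1-(k+1)u)$; cross-multiplying by the two positive denominators and cancelling a factor $u^2$ reduces the claim to $k(k+1)u\le 2k+1$, which holds whenever $(k+1)u\le 1$. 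There is no genuine obstacle here: the arithmetic is routine, and the only point requiring a little care is the sign bookkeeping when cross-multiplying in the last relation together with the reminder that the standing smallness assumption on the indices times $u$ is what legitimises the cancellations.
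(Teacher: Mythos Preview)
Your proof is correct and is essentially the standard argument; the paper itself does not prove this lemma at all but simply cites it from Higham's book, so there is nothing to compare against beyond noting that your derivation matches the textbook one.
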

\noindent In this paper we will make use of the above relations without
explicitly referring to this lemma. We will also make use of a series of results
of classical rounding error analysis related to inner products, matrix-vector
and matrix-matrix products, matrix inversion, and determinant computations.

\paragraph{a) Inner products.}
The following backward error result holds for the inner products between two
vectors (see equation (3.4) in \cite{higham2002accuracy}): Let $\bm{a},
\bm{b}\in\mathbb{R}^n$, then,
\begin{align}
    \label{eq:inner_prods}
    \widehat{\bm{a}^T\bm{b}}=(\bm{a}+\Delta \bm{a})^T\bm{b},\quad\text{where}\quad |\Delta \bm{a}|\leq \gamma_n|\bm{a}|\leq cnu|\bm{a}|,
\end{align}
where the bound is to be interpreted entrywise. The above bound implies that an
inner product computed in finite-precision arithmetic is equivalent to an exact
inner product between perturbed vectors.

\paragraph{b) Matrix-vector products.}
The following backward error result holds for matrix-vector products (see equation (3.11) in \cite{higham2002accuracy}): Let $A\in\mathbb{R}^{m\times n}$, $\bm{b}\in\mathbb{R}^n$, then,
\begin{align}
    \label{eq:matvec_higham}
    \widehat{A\bm{b}}=(A+\Delta A)\bm{b},\quad\text{where}\quad |\Delta A|\leq \gamma_n|A|\leq cnu|A|,
\end{align}
where the bound is to be interpreted entrywise. The above bound implies that a
matrix-vector product computed in finite-precision arithmetic is equivalent to an exact
matrix-vector product with a perturbed matrix.

\paragraph{c) Matrix-matrix products.}
The following forward error result holds for matrix-vector products (see equation (3.13) in \cite{higham2002accuracy}): Let $A\in\mathbb{R}^{m\times n}$, $B\in\mathbb{R}^{n\times l}$, $C=AB$, then,
\begin{align}
    \label{eq:matmat_higham}
    \hat{C} = C + \Delta C,\quad\text{where}\quad |\Delta C|\leq \gamma_n|A||B|\leq cnu|A||B|.
\end{align}
Again, the bound is to be interpreted entrywise.

\paragraph{d) Matrix inversion.}
Another result which we will be using is a forward error bound for computing the
inverse of a matrix $A\in\mathbb{R}^{n\times n}$. Since there are many
algorithms for matrix inversion available (cf.~\cite{higham2002accuracy}) we
assume that the one we use satisfies the following assumption (see equation
(14.3) in \cite{higham2002accuracy}):

\begin{assumption}
    \label{ass:matrix_inverse_algorithm}
    The matrix inversion algorithm yields an approximation $\widehat{A^{-1}}\approx A^{-1}$
    which satisfies
    \begin{align}
        \label{eq:matrix_inv}
        |\widehat{A^{-1}}-A^{-1}| \leq c(n)u|A^{-1}||A||A^{-1}|,
    \end{align}
    for a positive constant $c(n)$ which grows with $n$.
\end{assumption}

\paragraph{e) Determinant computation.}
The following theorems describe how rounding errors affect the computation of
determinants computed through an LU factorization (e.g., this is what LAPACK
does). 
\begin{theorem}[Theorem 9.3, Section 9, and Section 14.6 in \cite{higham2002accuracy}]
    \label{th:LU_det}
    The LU factorization of a matrix $A\in\mathbb{R}^{n\times n}$ yields LU factors $
    \hat{L}$ and $\hat{U}$ satisfying
    \begin{align}
        \hat{L}\hat{U} = A + \Delta A,\quad\text{where}\quad |\Delta A|\leq c(n)u|\hat{L}||\hat{U}|,
    \end{align}
    where $c(n)$ is a constant growing with $n$. Assuming that no
    underflow/overflow occurs\footnote{For determinants in particular, this is
    a strong assumption, although there are remedies: see Section 14.6 in \cite{higham2002accuracy}.}, computing the
    determinant as $\text{det}(\hat{U})$ yields
    \begin{align}
        \label{eq:_LU_det_theta_n}
        \widehat{\textnormal{det}(A)} = \textnormal{det}(\hat{U})(1 + \theta_n),\quad\text{where}\quad |\theta_n|\leq cnu.
    \end{align}
\end{theorem}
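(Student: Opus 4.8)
The plan is to treat the two displayed bounds separately: the first is the classical backward error analysis of Gaussian elimination (Higham, Theorem 9.3, and its partial‑pivoting variant), and the second is a short consequence.

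For the first bound I would analyze one step of the elimination. Writing $A^{(1)}=A$ and letting $A^{(k+1)}$ denote the active submatrix after eliminating the $k$-th column, the model \eqref{eq:std_floating_point_error_model} gives $\hat\ell_{ik} = (\hat a_{ik}^{(k)}/\hat a_{kk}^{(k)})(1+\delta_{ik})$ for $i>k$, and $\hat a_{ij}^{(k+1)} = \big(\hat a_{ij}^{(k)} - \hat\ell_{ik}\hat a_{kj}^{(k)}(1+\delta'_{ij})\big)(1+\delta''_{ij})$ for $i,j>k$, with all roundoffs bounded by $u$. Rearranging the update identity expresses $\hat\ell_{ik}\hat a_{kj}^{(k)}$ in terms of $\hat a_{ij}^{(k)}$, $\hat a_{ij}^{(k+1)}$, and relative perturbations of these; summing (telescoping) over $k=1,\dots,\min(i,j)-1$ and identifying $\hat u_{kj}=\hat a_{kj}^{(k)}$ and $\hat\ell_{kk}=1$, the partial sums of $\sum_k\hat\ell_{ik}\hat u_{kj}$ collapse to $a_{ij}$ plus an accumulated error. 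Invoking Lemma \ref{lemma:higham_theta_gamma} to replace the products of $(1+\delta)$ factors by a single $\theta_{\min(i,j)}$ term, each entry of the error is bounded by $\gamma_{\min(i,j)}\sum_k|\hat\ell_{ik}||\hat u_{kj}|$, which is dominated entrywise by $c(n)u\,|\hat L||\hat U|$. When partial pivoting is used, as in LAPACK, the same argument is applied to the row‑permuted matrix $PA$; the permutation only reorders the rows of $\Delta A$, leaving the bound intact.

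For the determinant bound, note that $\hat U$ is upper triangular, so its exact determinant is $\det(\hat U)=\prod_{k=1}^n\hat u_{kk}$. The algorithm forms this product in finite precision; assuming no intermediate underflow or overflow, the $n-1$ multiplications each contribute a factor $(1+\delta_k)$ with $|\delta_k|\le u$, so the computed value equals $\big(\prod_{k=1}^n\hat u_{kk}\big)\prod_{k=2}^n(1+\delta_k) = \det(\hat U)(1+\theta_{n-1})$ by Lemma \ref{lemma:higham_theta_gamma}, and $|\theta_{n-1}|\le\gamma_{n-1}\le\gamma_n\le cnu$, which is \eqref{eq:_LU_det_theta_n}. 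With pivoting one additionally multiplies by the exact sign of the permutation, which does not affect the bound.

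The main obstacle is the bookkeeping in the first part: one must carefully track which roundoff factors multiply which partial products through the elimination recursion so that, after telescoping, the accumulated $(1+\delta)$ terms absorb cleanly into one $\theta_{\min(i,j)}$ per entry (and so that the factor multiplying the error is $|\hat L||\hat U|$ rather than $|A|$). The determinant step is then essentially immediate; its only delicate point is the genuinely restrictive assumption, flagged in the footnote, that the diagonal product neither underflows nor overflows even when the true determinant is well scaled.
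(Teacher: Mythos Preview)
Your proposal is a correct and well-organized sketch of the classical backward error analysis of Gaussian elimination and the subsequent determinant computation, essentially reproducing the argument from Higham's textbook (Theorem~9.3 and Section~14.6). However, the paper does not provide its own proof of this statement: Theorem~\ref{th:LU_det} is stated as a direct citation from \cite{higham2002accuracy} and is used as a black box in the subsequent analysis (in particular, in the proofs of Corollary~\ref{coroll:detbound_final} and Theorem~\ref{th:detJ}). There is thus nothing in the paper to compare your argument against; what you have written is in fact the standard proof one would find in the cited reference, and it is sound.
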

Since $\text{det}(\hat{U})=\text{det}(A+\Delta A)$, this means that in finite
precision we compute the (almost) exact determinant of a slightly perturbed
matrix. To obtain a final bound for the error, we invoke the following
perturbation result for determinants:
\begin{theorem}[Corollary 2.14 in \cite{ipsen2008perturbation}]
    \label{th:ipsen_coroll}
    If $A\in\mathbb{R}^{n\times n}$ is non-singular, then it holds that
    \begin{align}
        |\textnormal{det}(A+E)-\textnormal{det}(A)|\leq \left(\left(\lVert
        A^{-1}\rVert_2\lVert E\rVert_2+1\right)^n-1\right)|\textnormal{det}(A)| \leq
        cn\lVert A^{-1}\rVert_2\lVert E\rVert_2|\textnormal{det}(A)|.
    \end{align}
\end{theorem}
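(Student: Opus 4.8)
The plan is to peel off $\det(A)$ as a common factor and reduce everything to a perturbation of the identity. Since $A$ is non-singular I can write $A + E = A\,(I + A^{-1}E)$, so multiplicativity of the determinant gives
\begin{align*}
    \det(A+E) - \det(A) = \det(A)\bigl(\det(I + A^{-1}E) - 1\bigr).
\end{align*}
Hence it suffices to bound $|\det(I+M) - 1|$ for $M := A^{-1}E$, and by submultiplicativity of the $2$-norm we have $\lVert M \rVert_2 \leq \lVert A^{-1}\rVert_2 \lVert E \rVert_2 =: \rho$.

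Next I would express $\det(I+M)$ through the eigenvalues $\lambda_1,\dots,\lambda_n$ of $M$ (over $\mathbb{C}$, counted with algebraic multiplicity): by Schur triangularization $\det(I+M) = \prod_{i=1}^n (1+\lambda_i)$, and expanding this product in terms of the elementary symmetric polynomials $e_k$ of the $\lambda_i$ gives $\det(I+M) - 1 = \sum_{k=1}^n e_k(\lambda_1,\dots,\lambda_n)$. Applying the triangle inequality and then using that $e_k$ is nondecreasing in each (nonnegative) argument together with $|\lambda_i| \leq \lVert M \rVert_2 \leq \rho$, I obtain
\begin{align*}
    |\det(I+M) - 1| \;\leq\; \sum_{k=1}^n e_k\bigl(|\lambda_1|,\dots,|\lambda_n|\bigr) \;=\; \prod_{i=1}^n\bigl(1+|\lambda_i|\bigr) - 1 \;\leq\; (1+\rho)^n - 1 .
\end{align*}
Substituting into the identity above yields the first inequality of the statement. (An eigenvalue-free alternative is to use the principal-minor expansion $\det(I+M) = \sum_{S \subseteq \{1,\dots,n\}} \det(M_S)$ and bound each principal minor by $\lVert M_S \rVert_2^{|S|} \leq \rho^{|S|}$; the counting then again produces $(1+\rho)^n - 1$.)

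Finally, for the coarser bound I would apply the mean value theorem to $x \mapsto (1+x)^n$ on $[0,\rho]$, which gives $(1+\rho)^n - 1 \leq n\rho\,(1+\rho)^{n-1}$; under the standing convention that $\rho = \lVert A^{-1}\rVert_2 \lVert E \rVert_2$ (equivalently $n\rho$) stays bounded, the factor $(1+\rho)^{n-1} \leq e^{(n-1)\rho}$ is $O(1)$, so $(1+\rho)^n - 1 \lesssim n\rho$ and the second inequality follows. I expect the only step requiring genuine care to be the passage from $\det(I+M)$ to a bound purely in terms of $\lVert M \rVert_2$: one must go through a triangularization (or the principal-minor identity) and the elementary symmetric functions, since the cruder estimate $|\det(I+M)| \leq \prod_i (1+|\lambda_i|)$ bounds $\det(I+M)$ itself but not its distance from $1$. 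Everything else is elementary.
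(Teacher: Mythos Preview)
Your argument is correct. Note, however, that the paper does not actually prove this statement: it is quoted verbatim as Corollary~2.14 of the cited reference and used as a black box in the subsequent corollary. So there is no ``paper's own proof'' to compare against here; what you have written is a self-contained proof of the cited result, and each step (factoring out $\det(A)$, bounding $|\det(I+M)-1|$ via elementary symmetric functions of the eigenvalues, and the final mean-value estimate under the smallness convention on $\lVert A^{-1}\rVert_2\lVert E\rVert_2$) is sound.
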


\begin{corollary}
    \label{coroll:detbound_final}
    Let $A\in\mathbb{R}^{n\times n}$ be non-singular and assume that $u$ is
    sufficiently small. If $\det(A)$ is computed via the LU factorization of $A$
    and no underflows/overflows occur, then it holds
    \begin{align}
        \label{eq:detbound_final}
        |\widehat{\textnormal{det}(A)}-\textnormal{det}(A)|\leq
        c(n)u\kappa_2(A)|\textnormal{det}(A)|,
    \end{align}
    where $\kappa_2(A)$ is the $2$-norm condition number of $A$ and $c(n)$
    is a positive constant growing with $n$.
\end{corollary}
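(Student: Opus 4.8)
The plan is to compose the two non-trivial facts already available: Theorem~\ref{th:LU_det}, which expresses the computed determinant as a tiny relative perturbation of the \emph{exact} determinant of a backward-perturbed matrix $A+\Delta A$, and the determinant perturbation estimate of Theorem~\ref{th:ipsen_coroll}, which measures the effect of $\Delta A$ on $\det(A)$ through $\lVert A^{-1}\rVert_2$. The only glue needed is to convert the entrywise backward-error bound $|\Delta A|\le c(n)u|\hat L||\hat U|$ into a normwise bound proportional to $\lVert A\rVert_2$.

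First I would record, from Theorem~\ref{th:LU_det}, that $\widehat{\det(A)}=\det(\hat U)(1+\theta_n)$ with $|\theta_n|\le cnu$, and that $\hat L\hat U=A+\Delta A$. Since the LU factor $\hat L$ is unit lower triangular (up to the row permutation used for pivoting), $\det(\hat L)=\pm1$ exactly, so $\det(\hat U)=\det(\hat L\hat U)/\det(\hat L)=\pm\det(A+\Delta A)$; the sign is irrelevant for the absolute-value estimate we want, so I take it to be $+1$. Hence $\widehat{\det(A)}=\det(A+\Delta A)(1+\theta_n)$, and I split
\begin{align*}
    \widehat{\det(A)}-\det(A)=\big(\det(A+\Delta A)-\det(A)\big)+\theta_n\,\det(A+\Delta A).
\end{align*}
Theorem~\ref{th:ipsen_coroll} (with $E=\Delta A$) bounds the first bracket by $cn\lVert A^{-1}\rVert_2\lVert\Delta A\rVert_2|\det(A)|$, while the second term is at most $cnu\,|\det(A+\Delta A)|\le cnu\,|\det(A)|+cnu\,|\det(A+\Delta A)-\det(A)|$.

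Next I would bound $\lVert\Delta A\rVert_2$. The spectral norm is monotone under entrywise absolute values, so $\lVert\Delta A\rVert_2\le\big\lVert\,|\Delta A|\,\big\rVert_2\le c(n)u\,\big\lVert\,|\hat L|\,|\hat U|\,\big\rVert_2$; invoking backward stability of the (pivoted) LU factorization — equivalently, that element growth is controlled — gives $\big\lVert\,|\hat L|\,|\hat U|\,\big\rVert_2\le c(n)\lVert A\rVert_2$ after absorbing the growth factor into $c(n)$, hence $\lVert\Delta A\rVert_2\le c(n)u\lVert A\rVert_2$. Substituting, the first bracket is $\le c(n)u\,\kappa_2(A)|\det(A)|$ with $\kappa_2(A)=\lVert A^{-1}\rVert_2\lVert A\rVert_2$; since $\kappa_2(A)\ge1$, a bound of the same form also dominates the $cnu\,|\det(A)|$ contribution from the $\theta_n$ term. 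Collecting everything gives $|\widehat{\det(A)}-\det(A)|\le c(n)u\,\kappa_2(A)|\det(A)|$ up to a remainder of order $u^2\kappa_2(A)^2|\det(A)|$, which is absorbed into the leading term under the hypothesis that $u$ is sufficiently small; this is exactly \eqref{eq:detbound_final}.

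The main obstacle is the step $\big\lVert\,|\hat L|\,|\hat U|\,\big\rVert_2\le c(n)\lVert A\rVert_2$: in full generality this exceeds $\lVert A\rVert_2$ only by the LU growth factor, which for partial pivoting is at worst exponential in $n$ (and, for determinants, one must additionally assume no overflow, as already flagged in Theorem~\ref{th:LU_det}). I would handle this by folding the growth factor into the $n$-dependent constant $c(n)$ — consistent with how Theorems~\ref{th:LU_det} and~\ref{th:ipsen_coroll} already hide $n$-dependent constants — while noting that in practice the growth factor behaves like a small constant; a reader wanting a fully explicit statement can instead carry the growth factor as a separate hypothesis, which changes nothing in how the corollary is subsequently used.
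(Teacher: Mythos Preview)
Your proposal is correct and follows essentially the same route as the paper: set $E=\Delta A$ in Theorem~\ref{th:ipsen_coroll}, use Theorem~\ref{th:LU_det} for the backward error, and invoke the bound $\lVert\,|\hat L||\hat U|\,\rVert_2\le c(n)\lVert A\rVert_2$ (which the paper attributes to \cite[Lemma~9.6]{higham2002accuracy}, exactly the growth-factor estimate you discuss). Your treatment is simply more explicit about the $\theta_n$ term and the growth-factor caveat, but the argument is the same.
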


\begin{proof}
    The proof is obtained by setting $E=\Delta A$ in Theorem
    \ref{th:ipsen_coroll} and applying Theorem \ref{th:LU_det} to bound the
    perturbation together with the bound $\lVert |\hat{L}||\hat{U}|\rVert_2\leq
    c(n)\lVert A \rVert_2$ which is a consequence of
    \cite[Lemma~9.6]{higham2002accuracy}.
\end{proof}

We will use all the above results as building blocks for deriving rounding
error bounds for the whole assembly process.

\begin{remark}
    All the above bounds (and thus the bounds derived in the next pages) are
    worst-case rounding error bounds. We remark that rounding errors may be more
    lenient in practice: they often behave as independent random variables
    leading to smaller error growth rates with respect to $n$ due to stochastic
    error cancellations \cite{higham2019new}. Under this scenario all the bounds
    listed here would still hold after replacing $\gamma_n$ with the
    smaller $\tilde{\gamma}_n=O(n^{1/2})$. However, it
    is difficult to predict \emph{a priori} whether this will happen for a given
    calculation.
\end{remark}

\subsection{Background on mixed-precision hardware accelerators}
\label{subsec:background_on_accelerators}

Mixed-precision hardware accelerators are specialized units which perform
multiply-add operations with superior performance. They are typically present in
GPUs as well as in chips tailored to artificial intelligence and machine
learning workloads.  Intel Xeon 4th-generation scalable processors also include
mixed-precision hardware accelerators. 

In this paper we aim to exploit these hardware capabilities to either accelerate
FE kernels (and therefore assembly) and/or to increase their accuracy when
reduced-precision formats are employed.  There are two types of accelerators
which are relevant to our work: 1) Mixed-precision vector units, performing
entrywise multiply-add operations between vectors. 2) Mixed-precision matrix
units, performing fused-matrix-multiply-add operations. In both cases the input
arrays are stored in a lower-precision format (e.g., fp16 or bf16) and the
multiply-add operation is performed in a higher precision format (e.g., fp32),
which is also used for storing the result.  The maximum array size is restricted
by the size of the accelerator registers and the multiplication between larger
arrays must necessarily be blocked.

Although our rounding error analysis is independent from the accelerator or chip
used, we only consider CPU implementations in our numerical experiments
(cf.~Section \ref{sec:numerical_results}). Therefore, we only describe the
properties of Intel CPU accelerators in detail here.  Intel vector accelerators
are called ``advanced vector extensions'' (AVX) units and Intel matrix
accelerators are called ``advanced matrix extensions'' (AMX) units.

\paragraph{AVX units.}
AVX is the name used to denote vector accelerators in general, i.e., it also
includes units which operate in the same precision and operations which are not
necessarily entrywise vector-multiply-adds (for a full list, see the Intel
Intrinsics Guide \cite{intel-intrinsics}). AVX units perform computations on
vector registers of different sizes (up to 512 bits on AVX512 units). In particular, in
this work we will use AVX512 operations when working in double, single, or fp16
half precision. For mixed-precision computations we will employ the AVX512-bf16
unit which performs a product between two
bf16 vectors of $32$ entries (i.e., 512 bits), adds the result to a single
precision vector of $16$ entries (again 512 bits), and accumulates it into
another single precision vector of the same size. Namely, given the vectors
\begin{align}
    \bm{a} = [a_1,\dots, a_{32}]^T,\quad 
    \bm{b} = [b_1,\dots, b_{32}]^T,\quad 
    \bm{c} = [c_1,\dots, c_{16}]^T,
\end{align}
with $\bm{a},\bm{b}$ stored contiguously in bf16 and $\bm{c}$ stored
contiguously in single precision, the AVX512-bf16 unit computes
\begin{align}
    \bm{c} \mathrel{+}= \tilde{\bm{a}}^1\circ\tilde{\bm{b}}^1 +
    \tilde{\bm{a}}^2\circ \tilde{\bm{b}}^2,
\end{align}
where $\circ$ denotes the entrywise vector product and all operations are
performed in single precision \cite{intel-intrinsics}. The vectors
$\tilde{\bm{a}}$ and $\tilde{\bm{b}}$, of length $16$, are given by
\begin{align}
    \tilde{\bm{a}}^1=[a_1,a_3,\dots, a_{31}]^T,\quad
    \tilde{\bm{a}}^2=[a_2,a_4,\dots, a_{32}]^T,\quad
    \tilde{\bm{b}}^1=[b_1,b_3,\dots, b_{31}]^T,\quad
    \tilde{\bm{b}}^2=[b_2,b_4,\dots, b_{32}]^T.
\end{align}
AVX512-bf16 fused-multiply-add instructions perform $64$ floating-point
operations per cycle which is $4$ times more than AVX512 fused-multiply-add
double precision units and the same as AVX512 fused-multiply-add fp16
half-precision instructions. Therefore, implementations exploiting these
mixed-precision accelerators can be up to $4$ times faster than double precision
and as fast as half-precision computations (albeit being more accurate than pure
half-precision operations).

\paragraph{AMX units.}
AMX is the name used by Intel to denote matrix
accelerators. The Intel Intrinsics Guide \cite{intel-intrinsics}
includes only load and store operations into matrix registers as well as fused-matrix-multiply-add operations. Here we are only concerned with the mixed-precision
fp32/bf16 instructions in the AMX-bf16 extension set \cite{intel-intrinsics}.
These instructions work with matrices of variable sizes. Here we use the largest matrix
size which fits the AMX registers: $16$ rows of $64$ bits each,
corresponding to $16$-by-$16$ for single precision matrices and $16$-by-$32$
for bf16 matrices (better seen as two packed $16$-by-$16$ bf16 matrices,
see next). The AMX-bf16 mixed-precision matrix multiplication units work as
follows: given the contiguously-stored matrices
\begin{align*}
    A = \left[
        \begin{array}{cccc}
            a_{1,1} & a_{1,2} & \dots & a_{1,32}\\
            a_{2,1} & a_{2,2} & \dots & a_{2,32}\\
            \dots & \dots & \dots & \dots\\
            a_{16,1} & a_{16,2} & \dots & a_{16,32}
        \end{array}
    \right],\quad 
    B = \left[
        \begin{array}{cccc}
            b_{1,1} & b_{1,2} & \dots & b_{1,32}\\
            b_{2,1} & b_{2,2} & \dots & b_{2,32}\\
            \dots & \dots & \dots & \dots\\
            b_{16,1} & b_{16,2} & \dots & b_{16,32}
        \end{array}
    \right],\quad 
    C = \left[
        \begin{array}{cccc}
            c_{1,1} & c_{1,2} & \dots & c_{1,16}\\
            c_{2,1} & c_{2,2} & \dots & c_{2,16}\\
            \dots & \dots & \dots & \dots\\
            c_{16,1} & c_{16,2} & \dots & c_{16,16}
        \end{array}
    \right],
\end{align*}
where $A,B\in\mathbb{R}^{16\times32}$ are stored in bf16 and
$C\in\mathbb{R}^{16\times16}$ in single precision, the AMX units compute
\begin{align}
    \label{eq:AMX_matrix_multiply_add_def}
    C \mathrel{+}= \tilde{A}_1\tilde{B}_1 + \tilde{A}_2\tilde{B}_2,
\end{align}
where all operations are performed in single precision by flushing output
subnormals to zero \cite{intel-dev-manual} and $\tilde{A}_1$, $\tilde{A}_2$
are given by
\begin{align}
    \tilde{A}_1 = \left[
        \begin{array}{cccc}
            a_{1,1} & a_{1,3} & \dots & a_{1,31}\\
            a_{2,1} & a_{2,3} & \dots & a_{2,31}\\
            \dots & \dots & \dots & \dots\\
            a_{16,1} & a_{16,3} & \dots & a_{16,31}
        \end{array}
    \right],\quad 
    \tilde{A}_2 = \left[
        \begin{array}{cccc}
            a_{1,2} & a_{1,4} & \dots & a_{1,32}\\
            a_{2,2} & a_{2,4} & \dots & a_{2,32}\\
            \dots & \dots & \dots & \dots\\
            a_{16,2} & a_{16,4} & \dots & a_{16,32}
        \end{array}
    \right],
\end{align}
i.e., $\tilde{A}_1$ and $\tilde{A}_2$ are the submatrices of $A$ respectively
containing its odd and even columns. $\tilde{B}_1$ and $\tilde{B}_2$ are defined
in the same way.

We remark that when the matrices that need to be multiplied
are larger than what fits in the AMX registers the matrix multiplication must be
decomposed into smaller matrix-multiply-add operations like
\eqref{eq:AMX_matrix_multiply_add_def}. It is also worth mentioning that this
decomposition often involves a non-trivial memory reorganization of the inputs.

\begin{example}
    We clarify this issue with an example: let us assume for simplicity that the input
    bf16 sizes were $3$-by-$6$ rather than $16$-by-$32$ and that we wanted to
    compute a matrix product $AB$ between two matrices $A$ and $B$ of size
    $3$-by-$12$ and $12$-by-$3$ respectively. Let $A$ be stored contiguously as
    \begin{align}
        A = \left[
            \begin{array}{ccc|ccc|ccc|ccc}
                a_{1,1} & a_{1,2} & a_{1,3} & a_{1,4} & a_{1,5} & a_{1,6} & a_{1,7} & a_{1,8} & a_{1,9} & a_{1,10} & a_{1,11} & a_{1,12}\\
                a_{2,1} & a_{2,2} & a_{2,3} & a_{2,4} & a_{2,5} & a_{2,6} & a_{2,7} & a_{2,8} & a_{2,9} & a_{2,10} & a_{2,11} & a_{2,12}\\
                a_{3,1} & a_{3,2} & a_{3,3} & a_{3,4} & a_{3,5} & a_{3,6} & a_{3,7} & a_{3,8} & a_{3,9} & a_{3,10} & a_{3,11} & a_{3,12}
            \end{array}
        \right].
    \end{align}
    In order to compute the product, $A$ would need to first be
    reorganized\footnote{The reshaping is only for the sake of explanation: what
    is important here is that each row is stored contiguously one after the
    other.} contiguously in memory as
    \begin{align}
        A = 
        \left[
            \begin{array}{c|c}
                \tilde{A}_{1,1} & \tilde{A}_{1,2} \\
                \hline \\[-1em]
                \tilde{A}_{2,1} & \tilde{A}_{2,2}
            \end{array}
        \right] = 
        \left[
            \begin{array}{ccc|ccc}
                a_{1,1} & a_{1,4}  & a_{1,2} & a_{1,5}  & a_{1,3} & a_{1,6} \\
                a_{2,1} & a_{2,4}  & a_{2,2} & a_{2,5}  & a_{2,3} & a_{2,6} \\
                a_{3,1} & a_{3,4}  & a_{3,2} & a_{3,5}  & a_{3,3} & a_{3,6} \\
                \hline
                a_{1,7} & a_{1,10} & a_{1,8} & a_{1,11} & a_{1,9} & a_{1,12} \\
                a_{2,7} & a_{2,10} & a_{2,8} & a_{2,11} & a_{2,9} & a_{2,12} \\
                a_{3,7} & a_{3,10} & a_{3,8} & a_{3,11} & a_{3,9} & a_{3,12} \\
            \end{array}
        \right].
    \end{align}
    Only after rearranging $A$ and performing a similar, albeit different,
    rearranging of $B$ (which we omit for brevity) one can finally compute $AB$
    as follows: First, let $C\in\mathbb{R}^{3\times 3}$ be the zero matrix stored in
    single precision. Second, use AMX units twice (loading the entries of $A$ and $B$ in two
    batches of $18$ numbers each) to compute
    \begin{align}
        C \mathrel{+}= \tilde{A}_{1,1}\tilde{B}_{1,1} + \tilde{A}_{1,2}\tilde{B}_{1,2},\\
        C \mathrel{+}= \tilde{A}_{2,1}\tilde{B}_{2,1} + \tilde{A}_{2,2}\tilde{B}_{2,2}.
    \end{align}
    The resulting $C$ is then equal to $AB$.
\end{example}

The appeal of using AMX cores lies in their efficiency: They perform $1024$
floating-point operations per cycle \cite{intel-dev-manual} which are $64$ times
more than AVX512 fused-multiply-add double precision instructions.  Therefore,
codes exploiting AMX-bf16 instructions can in principle be up to $64$ times
faster than their double precision equivalents. Of course this is just a
theoretical upper bound: this speedup is only obtainable on
fused-matrix-multiply-add operations of the form
\eqref{eq:AMX_matrix_multiply_add_def} and not on any other computation.
Nevertheless, these considerations provide a recipe for making full use of AMX
accelerators: large speedups are obtainable if algorithms are implemented so
that their computational bottleneck is comprised of matrix-matrix products or
fused-matrix-multiply-add operations. This is exactly what motivates the FE
kernel implementations presented in the next section.

\section{Local kernel algorithm and global assembly}
\label{sec:local_kernel_algorithm_and_assembly}

In this section we introduce the FE kernels we consider in the paper. First, we
describe the FE forms and then we present the kernel algorithms. For the latter
we select an implementation in which matrix-matrix products are the bottleneck
in order to exploit AMX accelerators. Finally, we briefly mention the assembly
process which is the direct application of our work.

Since rounding error analyses are specific to the actual computations performed,
covering the whole set of possible FE forms and elements would unnecessarily
complicate and lengthen the exposition. Therefore, we adopt the following
simplifications: 
\begin{enumerate}
    \item We only consider the mass and Poisson linear (actions)
        and bilinear (matrix) forms (with coefficients).
    \item We only work with Lagrange elements on simplices and boxed cells (e.g.,
        quadrilaterals and hexahedra).
    \item We assume that all FE basis functions appearing in the forms
        belong to the same FE approximation subspace.
\end{enumerate}
We expect the generalization of our results to other forms and elements
to yield qualitatively similar results.

\subsection{Cell kernels}
\label{subsec:cell_kernels}

Let $D$ be an open domain with compact closure in $\mathbb{R}^d$ and let $D_h$
be a tessellation of $D$ comprised of simplices or boxed cells. The mass and Poisson
forms over a single cell $K\in D_h$ read:
\begin{align}
    a_K(w,v)=\int_{K}z(\x)w v \dx,\quad\text{(mass form)},\quad\quad
    a_K(w,v)=\int_{K}z(\x)\nabla w \cdot \nabla v \dx,\quad\text{(Poisson form)},
\end{align}
where $w$, $v$, and $z$ all belong to the same FE approximation space.
We directly consider the evaluation of the forms over the reference cell and take
$\Phi=[\phi_{k}]_{k=1}^{n_{\phi}}$ to be the reference basis functions.
We consider the computation of
\begin{align}
    A_{ji} = a_{\K}(\phi_i, \phi_j),\quad v_j = a_{\K}(w, \phi_j),
\end{align}
where $w$ is now a known function and $a_{\K}$
is the bilinear form over $K$ mapped onto the reference cell $\K$. Here $A\in
\mathbb{R}^{n_{\phi}\times n_{\phi}}$ and $\bm{v}\in\mathbb{R}^{n_{\phi}}$ are the (local) matrix and
vector that encode the bilinear and linear form over $K$ respectively.

After mapping the mass form to the reference cell we obtain
\begin{align}
    A = \int_{\K}\check{z}(\X) |\textnormal{det}(J(\X))|
    \left(\Phi(\X) \otimes \Phi(\X)\right) \dX,\quad\quad\quad
    \bm{v} = \int_{\K}\left(\check{z}(\X) |\textnormal{det}(J(\X))|
    \check{w}(\X)\right) \Phi(\X) \dX.
\end{align}
Here $\check{z}(\X)$ and $\check{w}(\X)$ are the result of mapping the functions
$z$ and $w$ onto the reference cell and $|\textnormal{det}(J(\X))|$ is the
absolute value of the determinant of the Jacobian $J(\X)$ of the reference map.
For the Poisson forms we instead obtain
\begin{align}
    A = \sum_{s=1}^d\sum_{t=1}^d \bar{A}_{st},\quad\quad \bm{v} =
    \sum_{s=1}^d\bar{\bm{v}}_s,
\end{align}
where $\bar{A}\in\mathbb{R}^{d\times d\times n_{\phi} \times n_{\phi}}$,
$\bar{\bm{v}}_s\in\mathbb{R}^{n_{\phi}}$ are defined as follows: for $s=1,\dots, d$,
$t=1,\dots, d$,
\begin{align}
    \bar{A}_{st} = \int_{\K}\check{C}_{st}(\X) \left(\partial_{\X_t}\Phi(\X) \otimes
    \partial_{\X_s}\Phi(\X)\right) \dX,\quad\quad
    \bar{\bm{v}}_s = \int_{\K} \check{r}_s(\X) \partial_{\X_s}\Phi(\X) \dX.
\end{align}
Here, $\check{C}(\X)\in\mathbb{R}^{d\times d}$ and $\check{\bm{r}}(\X)\in\mathbb{R}^d$ are given by
\begin{align}
    C(\X) = \check{z}(\X)G(\X),\quad\quad
    \check{\bm{r}}(\X) = \check{z}(\X)(G(\X)\nabla \check{w}(\X)),\\
    \text{where}\quad G(\X) =
    |\textnormal{det}(J(\X))|(J^{-1}(\X))(J^{-1}(\X))^T\in\mathbb{R}^{d\times
    d}
\end{align}
is the geometry tensor. We highlight the fact that the functions $\check{C}(\X)$
and $\check{\bm{r}}(\X)$ incorporate both geometry and form coefficient information.

Note that the mass form can also be written in a similar
way by setting $G(\X)=|\textnormal{det}(J(\X))|$,
$\check{\bm{r}}(\X)=\check{z}(\X)G(\X) \check{w}(\X)$, $\bar{\bm{v}}_1=\bm{v}$,
and $\bar{A}_{11}=A$. Therefore, for both forms we can write the local cell matrix
and vector as
\begin{align}
    \label{eq:form_decomposed}
    A = \sum_{s=1}^{n_d}\sum_{t=1}^{n_d} \bar{A}_{st},\quad\quad \bm{v} =
    \sum_{s=1}^{n_d}\bar{\bm{v}}_s,
\end{align}
where $n_d=d$ for Poisson and $n_d=1$ for the mass form and
$\bar{A}\in\mathbb{R}^{n_d\times n_d\times n_{\phi} \times n_{\phi}}$.

\begin{remark}
    Most forms can be expressed as sums similar to
    \eqref{eq:form_decomposed} by considering each term in the sum
    to be the discretization of a monomial term comprised of integrals of
    derivatives of basis function components, cf.~\cite{kirby2007efficient,rognes2010efficient}.
\end{remark}

After exactly integrating the integrals in \eqref{eq:form_decomposed} exactly\footnote{We
use exact quadrature for simplicity. Using inexact rules does not change our
argument.} with suitable quadrature rules $\{(\omega_q,\X_q)\}_{q=1}^{n_q}$ we can
write each term in the sums as
\begin{align}
    \bar{A}_{st} = B_sC_{st}B_t^T,\quad \bar{\bm{v}}_s = B_s\bm{r}_s,
\end{align}
where $C$ is a fourth-order sparse tensor such that 
\begin{align}
    C_{st} = \text{diag}_{q}(\omega_q\check{C}_{st}(\X_q))\in\mathbb{R}^{n_q\times n_q},
\end{align}
$\bm{r}_s\in\mathbb{R}^{n_q}$ is given by
\begin{align}
    \bm{r}_s = \textnormal{vstack}_q(\omega_q\check{\bm{r}}_s(\X_q)),
\end{align}
and $B\in\mathbb{R}^{n_d\times n_{\phi} \times n_q}$ is a third-order tensor with entries given by
\begin{align}
    B_{skq}=\partial_{\X_s}\phi_k(\X_q),\quad\quad B_{skq}=\phi_k(\X_q),
\end{align}
for the Poisson and the mass form respectively. Therefore, for both forms we can
express $A$ as the sum of $n_d^2$ triple matrix products:
\begin{align}
	\label{eq:triple_products}
    A = \sum_{s=1}^{n_d}\sum_{t=1}^{n_d} B_{s}C_{st}B_t^T,
\end{align}
and $\bm{v}$ as the sum of $n_d$ matrix-vector products:
\begin{align}
	\label{eq:matvec_products}
    \bm{v} = \sum_{s=1}^{n_d} B_{s}\bm{r}_s.
\end{align}

The fact that $A$ is expressed as a sum of matrix-matrix products leads to a
kernel implementation that is amenable to AMX acceleration (as well as AVX).
However, the same does not apply to $\bm{v}$, which only includes matrix-vector
products and, as such, can only be accelerated with vector units. As a solution,
we batch kernel computations by groups of $n_{\text{batch}}$ cells
($n_{\text{batch}}$ must be divisible by 16 to get the most out of AMX
computations, cf.~\ref{subsec:background_on_accelerators}), thus obtaining
\begin{align}
     V^{\text{batch}}= \sum_{s=1}^{n_d} B_sR_s^{\text{batch}},\quad\text{where}\quad
     V^{\text{batch}} = \left[\bm{v}^{1} | \dots |
     \bm{v}^{n_{\text{batch}}}\right],\quad R_s^{\text{batch}} =
     \left[\bm{r}^{1}_s | \dots | \bm{r}^{n_{\text{batch}}}_s\right],
\end{align}
where $V_s^{\text{batch}}$ and $R_s^{\text{batch}}$ are matrices of size
$n$-by-$n_\text{batch}$ and $n_q$-by-$n_\text{batch}$ respectively (recall that
$B_s\in\mathbb{R}^{n\times n_q}$ only depends on the reference cell).

\begin{remark}
    \label{rem:cell_batching}
    In what follows, we present our kernel algorithms and rounding error analysis
    without cell-batching for simplicity, but we do use cell-batching in our
    numerical experiments (cf.~Section \ref{sec:numerical_results}). Our theory
    applies to cell-batching as-is since the computation on each column of
    $V^{\text{batch}}$ is performed independently.
\end{remark}

We are now ready to describe the cell kernel algorithms used in this paper.

\subsection{Local kernel algorithms}

In this paper we consider the following local kernel algorithms. Both are
amenable to AMX and AVX acceleration.

\vspace{-3pt}
\paragraph{Algorithm 1. Local kernel algorithm for bilinear forms.}
\begin{enumerate}
    \item Tabulate the basis function values
        forming $B$.
    \item Compute $C$.
    \item Compute $H_{st}=C_{st}B_t^T$ for all $s,t\in\{1,\dots, n_d\}$.
    \item Compute $A=\sum_{s=1}^{n_d}\sum_{t=1}^{n_d}B_sH_{st}$.
\end{enumerate}

\begin{remark}
    Another option is to compute $\tilde{H}_s=\sum_{t=1}^{n_d}H_{st}B_t^T$ so
    that $A=\sum_{s=1}^{n_d}B_s\tilde{H}_s$. This strategy leads to nearly
    the same rounding error analysis and error bounds.
\end{remark}

\paragraph{Algorithm 2. Local kernel algorithm for linear forms.}
\begin{enumerate}
	\item Tabulate the basis function values forming $B$.
    \item Compute $\bm{r}_s$ for all $s$.
    \item Compute $\bm{v}=\sum_{s=1}^{n_d}B_s\bm{r}_s$ (using cell batching).
\end{enumerate}
\vspace{3pt}

\begin{remark}
    For the sake of brevity, we only consider the above algorithms. We also do
    not analyze sum factorization approaches which are often used for tensor
    product FEs (see, e.g., \cite[Section~4.1]{karniadakis2005spectral}). The
    adaptation of our rounding error analysis to different kernel
    implementations should hopefully be straightforward.
\end{remark}

Once the local kernels have been run over all cells, every FE code assembles the
resulting local tensors into a global tensor. This process, called assembly,
is the natural application of faster FE kernels and is briefly described next.

\subsection{Global Assembly}
Assembly is the process through which the local matrices and vectors obtained
from running the kernels are collected into a global tensor.  Let $n_K$ be the
total number of cells and let $n_g$ be the number of global degrees of freedom.
Using linear algebra, we can write the assembled global matrix $A^g$ and vector
$\bm{v}^g$ as
\begin{align}
    \label{eq:global_assembly}
    A^g = LDL^T,\quad \bm{v}^g=L\bm{b},
\end{align}
where
\begin{align}
    \label{eq:assembly_matrixvec_def}
    D = \textnormal{diag}_{k=1}^{n_K}(A^k),\quad \bm{b} = \textnormal{vstack}_{k=1}^{n_K}(\bm{v}^k),
\end{align}
and $A^k$, $\bm{v}^k$ are the local kernel matrix and vector on the $k$-th cell
respectively. Here $L\in \{0,1\}^{n_g\times n_{\phi}n_K}$ is a boolean assembly
matrix with at most one non-zero per column that encodes the local-to-global
map, see e.g.,~\cite{wathen1987realistic}. We take the evaluation of
\eqref{eq:global_assembly} as the assembly algorithm to be investigated in our
rounding error analysis.  For the sake of brevity, we do not consider the
multitude of assembly algorithms available in the literature, but we remark
that parallel implementations only change the order in which
floating-point computations are performed and thus has no influence over the
worst-case error analysis derived next.

\section{Rounding error analysis}
\label{sec:rounding_error_analysis}

In this section we perform a comprehensive rounding error analysis of Algorithms
1 and 2 and of the assembly process. The analysis also includes mixed-precision
implementations and accounts for hardware acceleration. That we are aware of this is the first
rounding error analysis of FE kernels as well as the first rigorous result
guiding the use of mixed-precision hardware accelerators in FE kernels.

Before we begin, we make two simplifying assumptions:

\begin{assumption}
    We ignore floating point range effects, i.e., we assume underflow and
    overflow do not occur.
\end{assumption}

\begin{assumption}
    We ignore rounding errors arising from mesh-generation and the computation
    of quadrature rules.
\end{assumption}

This latter assumption is sensible whenever the mesh and quadrature rules
are computed in higher precision than the target accuracy of our computations.
We remark that order-machine precision perturbations to mesh coordinates
correspond to small relative perturbations of the mesh size and are thus
unlikely to affect the overall FE approximation error provided that the mesh is
of good enough quality.

We proceed in our analysis following the outline of Algorithms 1 and 2. We
consider, in order, rounding errors arising from:
\begin{enumerate}
    \item Basis function and finite element function evaluation (Section \ref{subsec:polyval}).
    \item Geometry computations (Section \ref{subsec:geometry}).
    \item Mixed-precision construction of the matrices and vectors appearing in Algorithms 1 and 2 (Section \ref{subsec:MPmatrix_construction}).
    \item Mixed-precision accumulation of matrix-matrix products into $A$ and $v$ and global assembly (Section \ref{subsec:final_matmats_sum}).
\end{enumerate}

\subsection{Finite element tabulation and function evaluation}
\label{subsec:polyval}

Tabulating FE basis functions amounts to evaluating the reference basis
functions and their derivatives, comprised of multivariate polynomials, at the
quadrature nodes.  Similarly, evaluating functions belonging to FE approximation
subspaces consists of computing linear combinations of FE basis functions.

The rounding error analysis of univariate polynomials is classical (cf.~Section
5 in \cite{higham2002accuracy}). The evaluation of univariate Lagrange
polynomials has been analyzed by Higham in \cite{higham2004numerical} and the
multivariate version of Horner's algorithm has also been studied
\cite{pena2000multivariate}. Nevertheless, there is no specific work on the
evaluation of linear combinations of multivariate Lagrange polynomials and their
derivatives. We present such an analysis in Appendix \ref{appendix_sec:polyval}.
Here, we only describe the specific application of this analysis to Lagrange FE
over simplices or boxed cells.

First, let $V_{\K}=\textnormal{span}\left(\bm{\Phi}\right)$,
where the entries of $\bm{\Phi}$ form the degree-$p$ nodal Lagrange reference
basis over $\K$. Also let $\partial_s V_\K =
\textnormal{span}(\partial_s\bm{\Phi})$ and define the following condition numbers:
\begin{align}
    \kappa(V_{\K}) :=
    \max_{\X\in \K}\
    \lVert\bm{\Phi}(\X)\rVert_1,\quad\kappa(\partial_{s}V_{\K}) := \max_{\X\in
    \K}\
    \lVert\partial_{s}\bm{\Phi}(\X)\rVert_1,\quad\text{and}\quad
    \kappa(\partial_{s}\bm{\Phi}) :=
    \max_{i=1,\dots,n_\phi}\kappa(\partial_{s}\phi_i).
\end{align}
Here $\kappa(V_{\K})$ and $\kappa(\partial_s V_{\K})$ are the condition numbers
(also known as the Lebesgue constants) of the bases spanning $V_{\K}$ and
$\partial_s V_{\K}$ respectively, while $\kappa(\partial_s \phi_i)$ is instead
the condition number of the \emph{evaluation} of the partial derivative of the
$i$-th basis function (we refer to Lemma \ref{lemma:polyval_derivatives} in
Appendix \ref{appendix_sec:polyval} for the formal definition of
$\kappa(\partial_s \phi_i)$).

\begin{remark}
    The quantity $\kappa(V_{\K})$ is the Lebesgue constant of the FE basis. It
    is well known that it can be made to grow polylogarithmically slow with the
    polynomial degree for boxed cells provided that the degrees of freedom are
    suitably chosen.  On simplices the Lebesgue constant is larger, although it
    can be kept small even for moderately high-degree polynomials by
    constructing the FE basis accordingly, see e.g., \cite{isaac2020recursive}.
\end{remark}

We now state the following Lemma which provides
rounding error bounds for the evaluation of FE functions and their derivatives:

\begin{lemma}
    \label{lemma:polyval_lagrange_specific}
    Let $\K$ be a $d$-dimensional reference simplex or hypercube and let
    $V_{\K}=\textnormal{span}\left(\bm{\Phi}\right)$, where the entries of
    $\bm{\Phi}$ form the degree-$p$ nodal Lagrange reference basis over $\K$.
    Then, if $u$ is sufficiently small, the evaluation of $\phi_i(\X)$ and its
    derivatives in precision $u$ satisfy, for all $i$ and for all $\X\in \K$,
    the bounds
    \begin{align}
        \label{eq:coroll_polyval_lagrange_specific_1}
        |\phi_i(\X) - \hat{\phi}_i(\X)| &\lesssim dpu|\phi_i(\X)|,\\[0.25em]
        \max_{\X\in \K}|\partial_{s}\phi_i(\X) -
        \widehat{\partial_{s}\phi_i}(\X)| &\lesssim
        dpu\kappa(\partial_{s}\phi_i)\max_{\X\in
            \K}|\partial_{s}\phi_i(\X)|,
        \label{eq:coroll_polyval_lagrange_specific_2}
    \end{align}
    Furthermore, the evaluation of a function
    $z_h(\X)=\sum_{i=1}^{n_\phi}z_i\phi_i(\X)$ and its derivatives satisfy the
    bounds
    \begin{align}
        \label{eq:coroll_polyval_lagrange_specific_3}
        \max_{\X\in \K}|z_h - \hat{z}_h| &\lesssim p^du\kappa(V_{\K})\lVert
        \bm{z} \rVert_{\infty},\\[0.25em]
        \max_{\X\in \K}|\partial_{s}z_h - \widehat{\partial_{s}z_h}| &\lesssim
        p^du \kappa(\partial_{s}\bm{\Phi})\kappa(\partial_{s}V_{\K})\lVert
        \bm{z} \rVert_{\infty}.
        \label{eq:coroll_polyval_lagrange_specific_4}
    \end{align}
\end{lemma}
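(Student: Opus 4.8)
The plan is to obtain Lemma~\ref{lemma:polyval_lagrange_specific} as a corollary of the general rounding error analysis of multivariate Lagrange polynomial evaluation carried out in Appendix~\ref{appendix_sec:polyval} (in particular Lemma~\ref{lemma:polyval_derivatives} and its companion results), by specializing the abstract ``number of multiplicative factors'' and ``number of summands'' parameters appearing there to the concrete case of a degree-$p$ nodal Lagrange basis on a reference simplex or hypercube. Throughout, all arithmetic is assumed performed in a single precision with unit roundoff $u$, taken small enough that $\gamma_n\le cu$ via Lemma~\ref{lemma:higham_theta_gamma}.

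First I would record the structural form of the basis. On the reference hypercube the degree-$p$ tensor-product nodal basis function indexed by a multi-index is a product of $d$ univariate degree-$p$ Lagrange polynomials, one per reference coordinate; on the reference simplex the degree-$p$ nodal basis function indexed by $\bm{\alpha}$ with $|\bm{\alpha}|\le p$ is, after the affine pullback to barycentric coordinates $\lambda_0,\dots,\lambda_d$, a product of at most $|\bm{\alpha}|\le p$ affine factors of the form $(p\lambda_s-k)/(\alpha_s-k)$, cf.~\cite{isaac2020recursive}. In both cases the evaluation of $\phi_i(\X)$ decomposes into (i) an affine change of coordinates, i.e.\ a matrix--vector product with $O(d)$ summands per output, which by \eqref{eq:matvec_higham} contributes a $\gamma_{O(d)}$ backward perturbation, and (ii) the formation of a running product of $O(dp)$ scalar factors (at most $p$ factors per univariate polynomial across $d$ dimensions, or at most $p$ barycentric factors), each factor obtained by an $O(1)$-operation formula. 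Feeding the counts ``$O(dp)$ multiplicative factors, $O(d)$ preprocessing operations'' into the appendix lemma for single-polynomial evaluation yields the relative bound \eqref{eq:coroll_polyval_lagrange_specific_1} for $\phi_i$. Differentiating a product of affine factors replaces that product by a sum of $O(dp)$ such products, which can nearly cancel, so only an absolute bound proportional to $\max_{\X}|\partial_s\phi_i(\X)|$ survives, with an amplification factor that is exactly the evaluation condition number $\kappa(\partial_s\phi_i)$ of Lemma~\ref{lemma:polyval_derivatives}; this gives \eqref{eq:coroll_polyval_lagrange_specific_2}.

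Next I would treat the finite element function $z_h(\X)=\sum_{i=1}^{n_\phi}z_i\phi_i(\X)$ and its derivatives, using that $n_\phi=\dim V_{\K}=O(p^d)$ on both cell types. I would split the error by the triangle inequality into (a) the contribution of evaluating each $\phi_i$ inexactly, $\sum_i|z_i|\,|\phi_i(\X)-\hat\phi_i(\X)|\lesssim dpu\lVert\bm{z}\rVert_\infty\sum_i|\phi_i(\X)|\le dpu\lVert\bm{z}\rVert_\infty\kappa(V_{\K})$ by \eqref{eq:coroll_polyval_lagrange_specific_1} together with $\sum_i|\phi_i(\X)|=\lVert\bm{\Phi}(\X)\rVert_1\le\kappa(V_{\K})$, and (b) the error of forming the length-$n_\phi$ linear combination itself, which by the inner-product bound \eqref{eq:inner_prods} is at most $\gamma_{n_\phi}\sum_i|z_i|\,|\hat\phi_i(\X)|\lesssim p^du\lVert\bm{z}\rVert_\infty\kappa(V_{\K})$. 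Since $dp\lesssim p^d$, adding (a) and (b) and maximizing over $\X\in\K$ gives \eqref{eq:coroll_polyval_lagrange_specific_3}. The derivative bound \eqref{eq:coroll_polyval_lagrange_specific_4} follows the same route, using \eqref{eq:coroll_polyval_lagrange_specific_2} in term (a) — which, after bounding the basiswise evaluation condition numbers uniformly by $\kappa(\partial_s\bm{\Phi})$, produces the factor $\kappa(\partial_s\bm{\Phi})\kappa(\partial_sV_{\K})$ — and $\lVert\partial_s\bm{\Phi}(\X)\rVert_1\le\kappa(\partial_sV_{\K})$ in term (b), the product $\kappa(\partial_s\bm{\Phi})\kappa(\partial_sV_{\K})$ then dominating since $\kappa(\partial_s\bm{\Phi})\ge 1$.

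The main obstacle is the derivative half of the single-polynomial step: establishing that $\kappa(\partial_s\phi_i)$ for the barycentric/tensor-product representation is finite and correctly captures the cancellation incurred when differentiating a product of affine factors, and then controlling how it propagates from individual $\phi_i$ to the span $\partial_sV_{\K}$ — concretely, bounding $\sum_i\max_{\X}|\partial_s\phi_i(\X)|$ by $\kappa(\partial_s\bm{\Phi})\,\kappa(\partial_sV_{\K})$ rather than by the a priori larger quantity that the naive triangle inequality suggests. A secondary technical point is verifying that the affine pullback to reference/barycentric coordinates does not degrade the relative accuracy of the subsequent product evaluation, which is precisely what keeps the final error constant independent of the cell geometry.
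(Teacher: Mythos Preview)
Your proposal is correct and follows essentially the same route as the paper's proof in Appendix~\ref{appendix_sec:polyval}: exploit the product-of-affine-factors structure of the Lagrange basis (Lemma~\ref{lemma:lagrange_poly_eval}) to get the $O(dp\,u)$ single-polynomial bounds, then combine with the inner-product estimate \eqref{eq:inner_prods} and the count $n_\phi=O(p^d)$ (Lemma~\ref{lemma:FEM_function_eval}) to obtain the $O(p^d u)$ bounds for $z_h$ and $\partial_s z_h$; the paper records the same parameter identifications $m=p$, $r=O(d)$ on simplices and $m=dp$, $r=O(1)$ on boxed cells, giving $mr=O(dp)$ in both cases. The ``sum-of-maxes versus max-of-sums'' issue you flag for \eqref{eq:coroll_polyval_lagrange_specific_4} is genuine and the paper's proof is equally terse there (``similarly\dots''), so you are not missing an idea the paper supplies---both arguments rely on the same loose absorption into the $\kappa(\partial_s\bm{\Phi})\kappa(\partial_s V_{\K})$ product.
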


\begin{proof}
    See Appendix \ref{appendix_sec:polyval}.
\end{proof}

\subsection{Geometry evaluation}
\label{subsec:geometry}

In this subsection we analyze the propagation of rounding errors in geometry
evaluations. We first recall the expressions of the mass and Poisson geometry
tensors:
\begin{align}
    \begin{array}{ll}
        G(\X) = |\det(J)|, & \text{(mass form)}, \\[1em]
        G_{ij}(\X) = |\text{det}(J)|\sum_{k=1}^d(J^{-1}_{ik}J^{-1}_{jk}), & \text{(Poisson form)}, 
    \end{array}
\end{align}
where $J=J(\X)\in\mathbb{R}^{d\times d}$ is the
Jacobian matrix. Note that on simplices $J$ and $G$ are constant in space.

Our rounding error analysis proceeds by considering, in turn: 1) Evaluations of
the Jacobian (Section \ref{subsubsec:jacobian_eval}). 2) Evaluation of the
Jacobian determinant, i.e., the mass form geometry tensor (Section
\ref{subsubsec:jacobian_det}).  3) Evaluation of the Jacobian inverse (Section
\ref{subsubsec:jacobian_inv}). 4) Evaluation of the Poisson geometry tensor
(Section \ref{subsubsec:poisson_geom}).

\subsubsection{Jacobian evaluations.}
\label{subsubsec:jacobian_eval}
We focus on first-order meshes only for simplicity and we analyse the following
evaluation algorithm: Let $K\in D_h$, let $\{\psi_k(\x)\}_{k=1}^{n_\psi}$ be the degree-$1$
Lagrange basis, and let $\{\phi_k(\X)\}_{k=1}^{n_\psi}$ be the corresponding
reference basis over $\K$, the reference cell. Every point in $K$ then satisfies
\begin{align}
    \x = \sum_{k=1}^{n_\psi}\x^k\psi_k(\x) =
    \sum_{k=1}^{n_\psi}\x^k\phi_k(\X),
\end{align}
where $\x^k$ is the $k$-th vertex of $K$ and $\X\in\K$.  We can use the above to
derive an expression for the Jacobian:
\begin{align}
    \label{eq:Jacobian_def}
    J_{ij}(\X)=\frac{\partial x_i}{\partial X_j} = \sum_{k=1}^{n_\psi} x_i^k
    (\partial_j \phi_k(\X)).
\end{align}

For which the following result holds:
\begin{lemma}
    \label{lemma:jacobian_eval}
    Let $K$ be a non-degenerate cell with vertices $\{\x^k\}_{k=1}^{n_\psi}$ and let
     $\mathcal{X}\in \mathbb{R}^{d\times n_\psi}$ with $\mathcal{X}_{ik} = x_i^k$.
     Let $\{\psi_k(\x)\}_{k=1}^{n_\psi}$ be the degree-$1$ Lagrange
     basis, and let $\{\phi_k(\X)\}_{k=1}^{n_\psi}$ be the corresponding
     reference basis over the reference cell.  Let $J(\X)$ be defined as in
     \eqref{eq:Jacobian_def}. If $u$ is sufficiently small, the evaluation of
     $J$ yields instead $\hat{J}$ satisfying, for all $\X\in \K$,
    \begin{align}
        \label{eq:lemma_jacobian}
        \hat{J} = J + \Delta J,\quad &\text{where}\quad |\Delta J| \leq
        c_1(d)u\lVert \mathcal{X} \rVert_{\infty},\quad \lVert\Delta
        J\rVert_{\infty} \leq c_2(d)u\kappa(K)\lVert J
        \rVert_{\infty},\quad \kappa(K) = n_\psi\frac{\lVert \mathcal{X}
        \rVert_\infty}{\lVert J \rVert_{\infty}}\geq 1.
    \end{align}
    Here $c_1(d)>1$ and $c_2(d)>1$ are constants that grow exponentially
    with~$d$.
\end{lemma}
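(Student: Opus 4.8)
The plan is to bound $\Delta J$ entrywise by using Lemma~\ref{lemma:polyval_lagrange_specific} to control the error in evaluating each reference derivative $\partial_j\phi_k(\X)$, then propagating this through the linear combination $J_{ij}(\X)=\sum_{k=1}^{n_\psi}x_i^k(\partial_j\phi_k(\X))$ using the standard inner-product backward error bound~\eqref{eq:inner_prods}. First I would write the computed Jacobian entry as an exact inner product $\widehat{J_{ij}}=(\bm{x}_i+\Delta\bm{x}_i)^T\widehat{\partial_j\bm\Phi}$ with $|\Delta\bm{x}_i|\leq\gamma_{n_\psi}|\bm{x}_i|$, where $\bm{x}_i=(x_i^1,\dots,x_i^{n_\psi})^T$ and $\widehat{\partial_j\bm\Phi}$ collects the finite-precision evaluations of the derivatives. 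Splitting the error into the part coming from the perturbation of $\bm{x}_i$ and the part coming from $\widehat{\partial_j\bm\Phi}-\partial_j\bm\Phi$, and using~\eqref{eq:coroll_polyval_lagrange_specific_2} together with the fact that the degree here is $p=1$ (so $\kappa(\partial_s\phi_k)$ is an $O(1)$ — indeed dimension-dependent — constant for the linear reference basis), I obtain $|\Delta J_{ij}|\lesssim u\,n_\psi\max_k|x_i^k|\cdot\max_{\X\in\K}\max_k|\partial_j\phi_k(\X)|$. Since $n_\psi$ depends only on $d$ (it is $d+1$ for a simplex, $2^d$ for a box) and $\max_{\X,k}|\partial_j\phi_k(\X)|$ is a $d$-dependent constant, this collapses to $|\Delta J|\leq c_1(d)u\lVert\mathcal{X}\rVert_\infty$, giving the first bound; the exponential growth of $c_1(d)$ is inherited from the $2^d$ count of box vertices and from the constants in Lemma~\ref{lemma:polyval_lagrange_specific}.

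For the second (normwise, relative) bound I would simply pass from the entrywise bound to the $\infty$-norm: $\lVert\Delta J\rVert_\infty\leq c_1(d)u\,d\,\lVert\mathcal{X}\rVert_\infty = c_1(d)d\,u\,\dfrac{\lVert\mathcal{X}\rVert_\infty}{\lVert J\rVert_\infty}\lVert J\rVert_\infty$, and absorb the factor $d$ and rename $n_\psi\lVert\mathcal{X}\rVert_\infty/\lVert J\rVert_\infty=:\kappa(K)$, adjusting $c_1(d)$ to $c_2(d)$. The inequality $\kappa(K)\geq 1$ follows because $\lVert J\rVert_\infty=\max_i\sum_j|\sum_k x_i^k\partial_j\phi_k|\leq \max_i\sum_j\sum_k|x_i^k|\,|\partial_j\phi_k|\leq n_\psi\,c(d)\lVert\mathcal{X}\rVert_\infty$, i.e. $\lVert J\rVert_\infty$ cannot exceed a $d$-dependent multiple of $n_\psi\lVert\mathcal{X}\rVert_\infty$; after the constant is folded into the definition one gets $\kappa(K)\geq 1$ (this may require normalising the partition-of-unity derivatives, which for a reference simplex sum in absolute value to a small $d$-dependent quantity, so the claim holds up to the constants).

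The main obstacle — and the step I would be most careful about — is making the constant bookkeeping genuinely \emph{dimension-only}. One must check that every quantity that could in principle depend on the cell geometry ($\kappa(K)$), on the conditioning of derivative evaluation ($\kappa(\partial_s\phi_k)$), or on mesh refinement, is either explicitly exhibited as $\kappa(K)$ in the final bound or is provably an $O(1)$ constant for the \emph{fixed} degree-$1$ reference basis on the reference cell. The subtle point is that $\kappa(\partial_s\phi_k)$ from Lemma~\ref{lemma:polyval_lagrange_specific} is a \emph{pointwise} condition number that could blow up near zeros of $\partial_s\phi_k$; however, since we only need a bound on $\max_{\X\in\K}|\Delta J|$ relative to $\max_{\X\in\K}|\partial_j\phi_k(\X)|$ (an absolute-type statement), the dangerous relative amplification is harmless — this is exactly why the lemma is stated with a $\max_{\X\in\K}$ on both sides of~\eqref{eq:coroll_polyval_lagrange_specific_2}. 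I would state this carefully so the reader sees that no hidden $p$- or $h$-dependence survives.
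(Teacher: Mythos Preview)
Your proposal is correct and follows essentially the same route as the paper: the paper invokes the packaged FE function evaluation bound (Lemma~\ref{lemma:FEM_function_eval}) directly, whereas you unpack its ingredients by hand (Lemma~\ref{lemma:polyval_lagrange_specific} for the derivative error, then~\eqref{eq:inner_prods} for the inner product), but the structure and constants are the same. The one place you hedge unnecessarily is the inequality $\kappa(K)\geq 1$: the paper obtains it cleanly by observing that on the reference simplex or box every $|\partial_j\phi_k|\leq 1$, so $C(\nabla\bm{\Phi}):=\max_{j,\X}\sum_k|\partial_j\phi_k(\X)|\leq n_\psi$ and hence $\lVert J\rVert_\infty\leq n_\psi\lVert\mathcal{X}\rVert_\infty$ with no leftover constant to fold in.
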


\begin{proof}
    The derivatives of degree-$1$ Lagrange basis functions have simple
    expressions: on simplicies they are constant, while for boxed
    cells they are products of $d$ monomials and thus satisfy the assumptions of
    Lemma \ref{lemma:FEM_function_eval}. Invoking Lemma
    \ref{lemma:FEM_function_eval} we thus obtain the bound
    \begin{align}
        \label{eq:_lemma_jacobian_1}
        \hat{J} = J + \Delta J,\quad\text{where}\quad |\Delta J| \leq
        \gamma_{n_\psi+d(r+2)}C(\nabla \bm{\Phi})\lVert \mathcal{X} \rVert_{\infty},\quad 
        \text{where}\quad C(\nabla\bm{\Phi}) =
        \max\limits_{j,\X}\sum_{k=1}^{n_\psi}\left|\partial_j \phi_k(\X)\right|.
    \end{align}
    We note that for all types of cells, we have $r=O(1)$. Additionally, it
    holds that $C(\nabla\bm{\Phi})\leq n_\psi\leq 2^d$ since each derivative is bounded in
    absolute value by $1$ for all points on the reference cell. We can thus
    bound
    \begin{align}
        \label{eq:_lemma_jacobian_2}
        \gamma_{n_\psi+d(r+2)}C(\nabla\bm{\Phi}) \leq \gamma_{n_\psi^2+d(r+2)}\leq c(d)u,
    \end{align}
    for some constant $c(d)>1$ which grows exponentially with $d$. Combining
    \eqref{eq:_lemma_jacobian_1} with \eqref{eq:_lemma_jacobian_2} yields the
    first bound in \eqref{eq:lemma_jacobian}. To obtain the second bound it is
    sufficient to take the infinity norm on both sides of the first bound and
    multiply and divide by $\lVert J \rVert_{\infty}$ (since the cell is
    non-degenerate by assumption the norm of $J$ is nonzero). The factor $n_\psi$ in
    the definition of $\kappa(K)$ is there to ensure that $\kappa(K)\geq 1$.
    Indeed we have $\lVert J \rVert_{\infty}\leq C(\nabla \bm{\Phi})\lVert \mathcal{X}
    \rVert_{\infty}\leq n_\psi \lVert \mathcal{X} \rVert_{\infty}$.
\end{proof}

\begin{remark}
    The bound \eqref{eq:lemma_jacobian} can be generalized to higher-order meshes by using the
    analysis from Appendix \ref{appendix_sec:polyval} to account for derivatives with a
    higher polynomial degree, but we refrain from doing so here for the sake of
    simplicity.
\end{remark}


\subsubsection{Jacobian determinants and mass geometry.}
\label{subsubsec:jacobian_det}
The errors in the evaluation of $\text{det}(J)$ are bounded by the following
result:
\begin{theorem}
    \label{th:detJ}
    Let the assumptions of Lemma \ref{lemma:jacobian_eval} hold and consider the
    evaluation of $\textnormal{det}(J)$ via LU factorization as in Theorem \ref{th:LU_det}.
    Then, provided $u$ is sufficiently small, the following forward error bound holds
    \begin{align}
        \label{eq:detJ}
        \widehat{\textnormal{det}(J)} = \textnormal{det}(J) +
        \Delta_{\textnormal{det}},\quad\text{where}\quad
        |\Delta_{\textnormal{det}}|\leq c(d)\kappa_2(J)\kappa(K)u|\textnormal{det}(J)|.
    \end{align}
    Here $c(d)>1$ is a constant which grows with $d$. The same bound holds after
    replacing the determinants with their absolute values:
    \begin{align}
        \label{eq:detJabs}
        |\widehat{\textnormal{det}(J)}| = |\textnormal{det}(J)| +
        \Delta_{\textnormal{absdet}},\quad\text{where}\quad
        |\Delta_{\textnormal{absdet}}|\leq |\Delta_{\textnormal{det}}|.
    \end{align}
\end{theorem}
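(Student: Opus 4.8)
The plan is to split the total error into two independent contributions: the error committed when evaluating the Jacobian $J$ itself (controlled by Lemma \ref{lemma:jacobian_eval}), and the error committed by the LU-based determinant routine when applied to the \emph{already perturbed} matrix $\hat{J}$ (controlled by Corollary \ref{coroll:detbound_final}). Indeed, the number returned by the algorithm is $\widehat{\det(\hat{J})}$, the finite-precision determinant of the computed Jacobian, not the finite-precision determinant of $J$. I would therefore write
\begin{align*}
    |\widehat{\textnormal{det}(J)} - \textnormal{det}(J)| \leq
    \underbrace{|\widehat{\textnormal{det}(\hat{J})} - \textnormal{det}(\hat{J})|}_{\textnormal{(I)}}
    + \underbrace{|\textnormal{det}(\hat{J}) - \textnormal{det}(J)|}_{\textnormal{(II)}},
\end{align*}
and bound the two terms separately.

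For term (II) I would apply the determinant perturbation bound, Theorem \ref{th:ipsen_coroll}, with $A=J$ and $E=\Delta J$, which gives $\textnormal{(II)} \leq c\,d\,\lVert J^{-1}\rVert_2\lVert\Delta J\rVert_2\,|\textnormal{det}(J)|$. The second estimate in \eqref{eq:lemma_jacobian} controls $\lVert\Delta J\rVert_\infty \leq c_2(d)u\kappa(K)\lVert J\rVert_\infty$; since $J$ is $d$-by-$d$, converting between the $\infty$- and $2$-norms costs only a $d$-dependent factor, so $\lVert J^{-1}\rVert_2\lVert\Delta J\rVert_2 \leq c(d)u\kappa(K)\lVert J^{-1}\rVert_2\lVert J\rVert_2 = c(d)u\kappa(K)\kappa_2(J)$. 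Hence $\textnormal{(II)} \leq c(d)u\kappa_2(J)\kappa(K)|\textnormal{det}(J)|$, which is already of the desired form.

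For term (I), I would first note that $\hat{J}=J(I+J^{-1}\Delta J)$ with $\lVert J^{-1}\Delta J\rVert_2 = O(u\kappa(K)\kappa_2(J)) < 1$ for $u$ small enough, so $\hat{J}$ is nonsingular and Corollary \ref{coroll:detbound_final} applies with $A=\hat{J}$: $\textnormal{(I)} \leq c(d)u\kappa_2(\hat{J})|\textnormal{det}(\hat{J})|$. It then remains to replace $\kappa_2(\hat{J})$ by $\kappa_2(J)$ and $|\textnormal{det}(\hat{J})|$ by $|\textnormal{det}(J)|$. Standard perturbation estimates (again using Theorem \ref{th:ipsen_coroll} for the determinant) give $\kappa_2(\hat{J}) \leq \kappa_2(J)\bigl(1+c(d)u\kappa(K)\kappa_2(J)\bigr)$ and $|\textnormal{det}(\hat{J})| \leq |\textnormal{det}(J)|\bigl(1+c(d)u\kappa(K)\kappa_2(J)\bigr)$; multiplying these into the bound for (I) and discarding the resulting $O(u^2)$ terms (licit once $u$ is sufficiently small) yields $\textnormal{(I)} \leq c(d)u\kappa_2(J)|\textnormal{det}(J)| \leq c(d)u\kappa_2(J)\kappa(K)|\textnormal{det}(J)|$, where the last step uses $\kappa(K)\geq 1$ from Lemma \ref{lemma:jacobian_eval}. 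Adding the bounds for (I) and (II) and absorbing constants gives \eqref{eq:detJ}. Finally, \eqref{eq:detJabs} is immediate: taking the absolute value of an already-computed floating-point number is an exact operation, so the algorithm returns exactly $|\widehat{\textnormal{det}(J)}|$, and the reverse triangle inequality for real scalars gives $\bigl|\,|\widehat{\textnormal{det}(J)}|-|\textnormal{det}(J)|\,\bigr| \leq |\widehat{\textnormal{det}(J)}-\textnormal{det}(J)| = |\Delta_{\textnormal{det}}|$. The main obstacle is the bookkeeping in term (I): one must keep track of the fact that the determinant routine is fed $\hat{J}$ rather than $J$, and verify that substituting $\kappa_2(\hat{J})$ and $|\textnormal{det}(\hat{J})|$ by their exact counterparts introduces only higher-order corrections — this is precisely the step where the ``$u$ sufficiently small'' hypothesis is genuinely needed.
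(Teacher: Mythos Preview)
Your proof is correct and reaches the same bound, but the decomposition differs from the paper's. You split via the triangle inequality into $\textnormal{(I)}=|\widehat{\det(\hat J)}-\det(\hat J)|$ and $\textnormal{(II)}=|\det(\hat J)-\det(J)|$, applying Corollary~\ref{coroll:detbound_final} to $\hat J$ for (I) and Theorem~\ref{th:ipsen_coroll} for (II); this forces the extra bookkeeping of transferring $\kappa_2(\hat J)$ and $|\det(\hat J)|$ back to $\kappa_2(J)$ and $|\det(J)|$ via first-order perturbation. The paper instead unwraps Corollary~\ref{coroll:detbound_final}: it writes the computed LU factors as $\hat L\hat U = \hat J + \Delta Y = J + (\Delta J + \Delta Y) =: J + E$, bounds $\lVert E\rVert_2 \lesssim u\kappa(K)\lVert J\rVert_2$ in one shot, and applies Theorem~\ref{th:ipsen_coroll} a single time directly to $J+E$. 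This sidesteps the $\hat J\to J$ substitution entirely and is precisely the ``obstacle'' you flagged; on the other hand, your route has the virtue of treating the determinant subroutine as a black box via Corollary~\ref{coroll:detbound_final}, which is arguably more modular. The argument for \eqref{eq:detJabs} is identical in both.
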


\begin{proof}
    Throughout the proof we ignore $d$-dependent constants for simplicity and
    simply write $a \lesssim b$ to indicate that there exists $c(d)\geq 1$ such
    that $a \leq c(d)b$. For simplicity of notation, let $Y=\hat{J}$ where
    $\hat{J}$ satisfies equation \eqref{eq:lemma_jacobian}. If we perform an LU
    factorization of $Y$ we obtain the perturbed LU factors $\hat{L}$ and
    $\hat{U}$ which satisfy the bound in Theorem \ref{th:LU_det}. Therefore,
    \begin{align}
        \hat{L}\hat{U} &= Y + \Delta Y = J + \Delta J + \Delta Y = J +
        E,\\
        \text{where}\quad \lVert \Delta Y \rVert_2 &\lesssim u \lVert
        \ |\hat{L}||\hat{U}|\ \rVert_2 \lesssim u \lVert Y \rVert_2 \lesssim u
        (\lVert J \rVert_2 + \lVert \Delta J \rVert_2),\\
        \lVert \Delta J \rVert_2 &\lesssim u\kappa(K)\lVert J
    \rVert_{\infty} \lesssim u\kappa(K)\lVert J
    \rVert_{2},
    \end{align}
    where $E=\Delta J + \Delta Y$. Here we have used Lemma
    \ref{lemma:jacobian_eval}, the triangle inequality, and the bound $\lVert\
    |\hat{L}| |\hat{U}|\ \rVert_2 \lesssim u\lVert Y \rVert_2$ which is a
    well-known result applying to LU factorizations
    (see \cite[Lemma~9.6 and Section~9.4]{higham2002accuracy}).
    Thus, ignoring higher-order terms in $u$, we obtain
    \begin{align}
        \lVert E \rVert_2 &\leq \lVert \Delta J \rVert_2 + \lVert \Delta Y
        \rVert_2 \lesssim u(1+\kappa(K))\lVert J \rVert_2 \lesssim
        u\kappa(K)\lVert J \rVert_2,
    \end{align}
    where we have absorbed the $1$ factor into the constant.  Applying Theorem
    \ref{th:ipsen_coroll} to $\hat{L}\hat{U}=J+E$ and plugging in the newly
    found bound for $\lVert E \rVert_2$, we obtain
    \begin{align}
        \label{eq:_proof_det_bound_full_1}
        |\textnormal{det}(\hat{U})-\textnormal{det}(J)|=|\textnormal{det}(J+E)-\textnormal{det}(J)|\lesssim \lVert J^{-1}
        \rVert_2\lVert E \rVert_2|\textnormal{det}(J)| \lesssim
        u\kappa_2(J)\kappa(K)|\textnormal{det}(J)|,
    \end{align}
    where we have used the fact that
    $\textnormal{det}(\hat{U})=\textnormal{det}(\hat{L}\hat{U})$. Note that
    there is no need to account for the $(1+\theta_n)$ term appearing in
    equation \eqref{eq:_LU_det_theta_n} (cf.~Theorem \ref{th:LU_det}), since
    this term only adds an $O(u|\textnormal{det}(J)|)$ error which can be
    harmlessly absorbed into the implicit constant factor in
    \eqref{eq:_proof_det_bound_full_1}. Hence, we can
    replace the left side of equation \eqref{eq:_proof_det_bound_full_1} with
    $|\Delta_{\text{det}}|$ which in turn yields \eqref{eq:detJ}.

    Note that replacing the determinant with its absolute values yields the same
    bound since $\widehat{|a|}=|\hat{a}|$ and thus, by the reverse triangle
    inequality,
    \begin{align}
        |\Delta_{\text{absdet}}| = |\ |\widehat{\det(J)}| - |\det(J)|\
        | \leq | \widehat{\det(J)} - \det(J) | = | \Delta_{\text{det}} |.
    \end{align}
    Therefore \eqref{eq:detJabs} also holds and the proof is concluded.
\end{proof}

Theorem \ref{th:detJ} automatically yields a bound for the mass geometry tensor
(which is simply $|\textnormal{det}(J)|$). For the Poisson geometry the process
is more complicated and we first need an intermediate result: a rounding error
bound for the computation of $J^{-1}$.

\subsubsection{Jacobian inverse.}
\label{subsubsec:jacobian_inv}
The bound for the inverse Jacobian is established
by the following lemma:
\begin{lemma}
    \label{lemma:jacobian_inverse}
    Let Assumption \ref{ass:matrix_inverse_algorithm} and the assumptions of
    Lemma \ref{lemma:jacobian_eval} hold. Then, for $u$ sufficiently small, it
    holds
    \begin{align}
        |\widehat{J^{-1}}-J^{-1}|\leq c(d)\kappa_2(J)\kappa(K)u|J^{-1}|,
    \end{align}
    where $c(d)>1$ is a constant growing with $d$.
\end{lemma}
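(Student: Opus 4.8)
The plan is to combine the forward-error bound for matrix inversion from Assumption \ref{ass:matrix_inverse_algorithm} with the perturbation bound for the Jacobian evaluation from Lemma \ref{lemma:jacobian_eval}. The key observation is that the computed $\widehat{J^{-1}}$ is not the result of inverting $J$ exactly, but rather of running the inversion algorithm on the perturbed matrix $\hat{J}=J+\Delta J$. So I would split the total error into two contributions via the triangle inequality:
\begin{align*}
    |\widehat{J^{-1}}-J^{-1}| \leq |\widehat{J^{-1}}-\hat{J}^{-1}| + |\hat{J}^{-1}-J^{-1}|,
\end{align*}
where $\widehat{J^{-1}}$ denotes the finite-precision output of the inversion algorithm applied to $\hat{J}$, and $\hat{J}^{-1}$ is the exact inverse of the (already perturbed) matrix $\hat{J}$.

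For the first term, I would apply Assumption \ref{ass:matrix_inverse_algorithm} with $A=\hat{J}$, giving $|\widehat{J^{-1}}-\hat{J}^{-1}|\leq c(d)u|\hat{J}^{-1}||\hat{J}||\hat{J}^{-1}|$. Since $\Delta J$ is an $O(u)$ perturbation (by Lemma \ref{lemma:jacobian_eval}), to leading order in $u$ one can replace $\hat{J}$ by $J$ in this bound, picking up $|J^{-1}||J||J^{-1}|$ and a harmless higher-order correction; converting the entrywise bound into the norm-style bound and using $\||J^{-1}||J||J^{-1}|\|\lesssim \|J^{-1}\|\,\kappa_2(J)$ absorbs a $\kappa_2(J)$ factor. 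For the second term, I would use the standard first-order perturbation identity for the matrix inverse, $\hat{J}^{-1}-J^{-1} = -J^{-1}(\Delta J)\hat{J}^{-1} = -J^{-1}(\Delta J)J^{-1} + O(u^2)$, take norms, and invoke the norm bound $\|\Delta J\|_\infty \leq c(d)u\kappa(K)\|J\|_\infty$ from Lemma \ref{lemma:jacobian_eval}. This produces a term of size $u\kappa(K)\|J^{-1}\|\,\|J\|\,\|J^{-1}\| \lesssim u\kappa(K)\kappa_2(J)|J^{-1}|$ after switching norms (all norms on $\mathbb{R}^{d\times d}$ are equivalent up to $d$-dependent constants, which are absorbed into $c(d)$).

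Adding the two contributions, both are bounded by $c(d)u\kappa_2(J)\kappa(K)|J^{-1}|$ (using $\kappa(K)\geq 1$ to absorb the $\kappa_2(J)$-only term from the inversion algorithm into the product), which gives the claimed bound after collecting constants. The main obstacle I anticipate is bookkeeping between entrywise bounds and norm bounds: Assumption \ref{ass:matrix_inverse_algorithm} and the desired conclusion are stated entrywise (with $|J^{-1}|$ on the right), whereas Lemma \ref{lemma:jacobian_eval}'s sharp bound on $\Delta J$ and the perturbation identity are most naturally handled in norms. I would manage this by working in norms for the intermediate estimates and then, at the final step, using $\|M\|_\infty \lesssim \|M\|_2$-type equivalences together with the fact that $d$ is fixed to convert back to an entrywise bound with a $c(d)$ constant — the cleanest route, since the statement already hides all $d$-dependence inside $c(d)$. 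A secondary point requiring care is ensuring the $O(u^2)$ terms from both the linearization of the inverse and the replacement $\hat{J}\to J$ are genuinely negligible, which holds precisely because $u$ is assumed sufficiently small and $\kappa_2(J)$, $\kappa(K)$ are treated as fixed cell-dependent quantities.
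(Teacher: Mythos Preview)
Your proposal is correct and follows essentially the same route as the paper: the same triangle-inequality split into an inversion-algorithm error (Assumption \ref{ass:matrix_inverse_algorithm} applied to $A=\hat J$) and an exact-inverse perturbation (handled via the first-order identity $\hat J^{-1}-J^{-1}=-J^{-1}\Delta J\,J^{-1}+O(u^2)$ together with Lemma \ref{lemma:jacobian_eval}). The only difference is in the final bookkeeping step: instead of passing through norm equivalences as you suggest, the paper stays entrywise throughout by using the scalar bound $|J||J^{-1}| \leq \lVert\,|J||J^{-1}|\,\rVert_{\max}\leq \lVert J\rVert_2\lVert J^{-1}\rVert_2=\kappa_2(J)$, which is what allows it to retain the entrywise factor $|J^{-1}|$ (rather than $\lVert J^{-1}\rVert$) on the right-hand side.
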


\begin{proof}
    Throughout the proof we ignore $d$-dependent constants for simplicity and
    simply write $a \lesssim b$ to indicate that there exists $c(d)\geq 1$ such
    that $a \leq c(d)b$. By the triangle inequality
    \begin{align}
        |\widehat{J^{-1}} - J^{-1}| \leq | \widehat{J^{-1}} - A^{-1}| + |A^{-1}
        - J^{-1}|.
    \end{align}
    Setting $A=\hat{J}=J+\Delta J$ and replacing $Y$ with $\widehat{J^{-1}}$ in
    \eqref{eq:matrix_inv} we can bound the first term on the right and obtain
    \begin{align}
        \label{eq:_lemma_jacobian_inverse_1}
        |\widehat{J^{-1}} - J^{-1}| \lesssim u|A^{-1}||A||A^{-1}| + |A^{-1} -
        J^{-1}|.
    \end{align}
    To leading order in $u$ we have that
    \begin{align}
        A^{-1} = (J+\Delta J)^{-1} = A^{-1} - A^{-1}\Delta J A^{-1} + O(u^2),
    \end{align}
    and therefore,
    \begin{align}
        |A^{-1}||A||A^{-1}| = |J^{-1}||J||J^{-1}| + O(u),\quad\text{and}\quad
        |A^{-1} - J^{-1}| = |J^{-1}||\Delta J||J^{-1}| + O(u^2).
    \end{align}
    Replacing the above into \eqref{eq:_lemma_jacobian_inverse_1}, ignoring
    $O(u^2)$ terms and invoking Lemma \ref{lemma:jacobian_eval} we obtain
    \begin{align}
        |\widehat{J^{-1}} - J^{-1}| \lesssim u|J^{-1}||J||J^{-1}| +
        |J^{-1}||\Delta J||J^{-1}| \lesssim
        \kappa_2(J)\kappa(K)u|J^{-1}|,
    \end{align}
    which is the thesis. Note that we have used the fact that $|J||J^{-1}| \leq
    \lVert\ |J||J^{-1}|\ \rVert_{\max} \leq \lVert J \rVert_2 \lVert J^{-1}
    \rVert_2 = \kappa_{2}(J)$.
\end{proof}

\subsubsection{Poisson geometry.}
\label{subsubsec:poisson_geom}
We can now combine the above bounds to obtain a rounding error estimate for the
Poisson geometry tensor.
\begin{theorem}
    \label{th:poisson_geometry}
    Let the assumptions of Theorem \ref{th:detJ} and Lemma
    \ref{lemma:jacobian_inverse} hold and let $G\in\mathbb{R}^{d\times d}$ be
    the Poisson geometry tensor. Its evaluation in precision $u$ yields
    $\hat{G}$ satisfying
    \begin{align}
        \label{eq:thm_geometry_1}
        \hat{G} = G + \Delta G,\quad\text{where}\quad &|\Delta G| \leq
        c_1(d)u\kappa(K)\kappa_2(J)\tilde{G}(\X),\\[0.5em]
        \text{and}\quad &\lVert \Delta G \rVert_{\max} \leq c_2(d)u\kappa(K)\kappa_2(J)\lVert
        G \rVert_{\max},
        \label{eq:thm_geometry_2}
    \end{align}
    where $\tilde{G}(\X)=|\textnormal{det}(J)(\X)||J^{-1}(\X)||J^{-T}(\X)|$
    and $c_1(d),c_2(d)\geq 1$ are constants growing with $d$.
\end{theorem}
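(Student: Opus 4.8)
The plan is to propagate the errors of the already-analyzed building blocks --- $\hat J$ (Lemma~\ref{lemma:jacobian_eval}), $\widehat{J^{-1}}$ (Lemma~\ref{lemma:jacobian_inverse}), and $\widehat{|\det(J)|}$ (Theorem~\ref{th:detJ}) --- through the formation of the product $|\det(J)|\,(J^{-1})(J^{-1})^T$, together with the fresh rounding errors of that product. Write $d_J=|\det(J)|>0$ (nonzero since $K$ is non-degenerate), $I=J^{-1}$, and let $\hat d_J$, $\hat I$ be their computed counterparts. By \eqref{eq:detJabs} and Lemma~\ref{lemma:jacobian_inverse}, $\hat d_J=d_J+\Delta d$ and $\hat I=I+\Delta I$ with $|\Delta d|\le c(d)u\kappa(K)\kappa_2(J)\,d_J$ and $|\Delta I|\le c(d)u\kappa(K)\kappa_2(J)\,|I|$ (entrywise); crucially, both bounds already include the error made in forming $\hat J$. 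The evaluation of $G$ then amounts to the matrix product $\hat I\hat I^T$ (transposition is exact) followed by the scalar scaling by $\hat d_J$.

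First I would account for the rounding errors of those two steps. The matrix--matrix bound \eqref{eq:matmat_higham} with inner dimension $d$ gives $\widehat{\hat I\hat I^T}=\hat I\hat I^T+E_1$ with $|E_1|\le\gamma_d|\hat I||\hat I|^T$, and the entrywise scalar scaling contributes factors $(1+\delta_{ij})$, $|\delta_{ij}|\le u$, so $\hat G_{ij}=\hat d_J\big(\hat I\hat I^T+E_1\big)_{ij}(1+\delta_{ij})$. Expanding and discarding $O(u^2)$ terms, $\hat G=\hat d_J\hat I\hat I^T+\hat d_J E_1+\hat d_J(\hat I\hat I^T)\circ\Delta_2+O(u^2)$, where $\Delta_2$ collects the $\delta_{ij}$. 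Using $|\hat I\hat I^T|\le|\hat I||\hat I|^T$ entrywise together with $\hat d_J|\hat I||\hat I|^T=\tilde G(\X)+O(u\kappa(K)\kappa_2(J)\tilde G(\X))$, the last two contributions are bounded entrywise by $(\gamma_d+u)\tilde G(\X)\lesssim du\,\tilde G(\X)$.

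Next I would expand the main term $\hat d_J\hat I\hat I^T=(d_J+\Delta d)(I+\Delta I)(I+\Delta I)^T$: the $O(1)$ term is exactly $G$, while the three first-order terms $\Delta d\,II^T$, $d_J\,\Delta I\,I^T$, and $d_J\,I\,\Delta I^T$ are bounded entrywise --- via $|II^T|\le|I||I|^T$ and the bounds on $|\Delta d|$, $|\Delta I|$ --- by $c(d)u\kappa(K)\kappa_2(J)\,d_J|I||I|^T=c(d)u\kappa(K)\kappa_2(J)\,\tilde G(\X)$. Collecting all contributions, absorbing the $d$-dependent constants (including the factor $\gamma_d/u\approx d$ coming from $E_1$) and using $\kappa(K)\kappa_2(J)\ge1$, yields $|\Delta G|\le c_1(d)u\kappa(K)\kappa_2(J)\tilde G(\X)$, which is \eqref{eq:thm_geometry_1}.

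Finally, for the max-norm bound \eqref{eq:thm_geometry_2} I would exploit the structure of $G$: since $J$ is invertible, $G/d_J=II^T$ is symmetric positive definite, so $G$ is a positive multiple of an SPD matrix and its largest-magnitude entry lies on the diagonal, $\|G\|_{\max}=\max_i G_{ii}$. By Cauchy--Schwarz, $\tilde G_{ij}=d_J\sum_k|I_{ik}||I_{jk}|\le d_J\big(\sum_k I_{ik}^2\big)^{1/2}\big(\sum_k I_{jk}^2\big)^{1/2}=\sqrt{G_{ii}G_{jj}}\le\max_i G_{ii}=\|G\|_{\max}$, hence $\|\tilde G(\X)\|_{\max}\le\|G\|_{\max}$; substituting into \eqref{eq:thm_geometry_1} gives \eqref{eq:thm_geometry_2} with $c_2(d)=c_1(d)$. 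I expect the main obstacle to be this last step: the inequality $\|\tilde G\|_{\max}\le\|G\|_{\max}$ fails for generic matrices and relies on recognizing the SPD structure of $J^{-1}J^{-T}$ (so the maximum sits on the diagonal) combined with Cauchy--Schwarz, whereas the rest is a routine first-order error expansion.
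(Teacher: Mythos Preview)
Your proposal is correct. The derivation of \eqref{eq:thm_geometry_1} is essentially the paper's argument recast in matrix--matrix language: the paper treats the $(i,j)$ entry as an inner product $|\det(J)|\,\bm a_i^T\bm a_j$ between rows of $J^{-1}$, applies the inner-product bound \eqref{eq:inner_prods}, and collects the same three first-order contributions you do, arriving at the same entrywise bound $|\Delta G_{ij}|\lesssim u\kappa(K)\kappa_2(J)\,|\det(J)|\,|\bm a_i|^T|\bm a_j|$.

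For \eqref{eq:thm_geometry_2} you take a genuinely different route. The paper passes through the $2$-norm: it bounds $\lVert\Delta G\rVert_2$ by $u\kappa(K)\kappa_2(J)|\det(J)|\,\lVert\,|J^{-1}||J^{-T}|\,\rVert_2$, then shows $\lVert\,|J^{-1}||J^{-T}|\,\rVert_2\le d\,\lVert J^{-1}J^{-T}\rVert_2$ via a $1$-/$\infty$-norm interpolation, and finally converts $2$-norms to max-norms using $d$-dependent equivalence constants. Your argument exploits instead that $G/d_J=J^{-1}J^{-T}$ is SPD (so the max-norm of $G$ sits on the diagonal) together with Cauchy--Schwarz on the rows of $|J^{-1}|$ to get $\tilde G_{ij}\le\sqrt{G_{ii}G_{jj}}\le\lVert G\rVert_{\max}$ directly. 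This is cleaner and yields $c_2(d)=c_1(d)$ with no extra powers of $d$, whereas the paper's norm-chain picks up additional $d$-dependent factors. Both are valid; yours is sharper.
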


\begin{proof}
    Again, we ignore $d$-dependent constants in the proof and
    simply write $a \lesssim b$ to indicate that there exists $c(d)\geq 1$ such
    that $a \leq c(d)b$. Before we proceed, we write $G$ as
    \begin{align}
        G_{ij}(\X) = |\text{det}(J)|\sum_{k=1}^d(J^{-1}_{ik}J^{-1}_{jk}) =
        |\text{det}(J)|(\bm{a}_i^T\bm{a}_j),
    \end{align}
    where we have indicated with $\bm{a}_i^T$ the $i$-th row of $J^{-1}$.
    Computing $G_{ij}$ in finite precision then yields
    \begin{align}
        \hat{G}_{ij} =
        |\widehat{\text{det}(J)}|\widehat{(\hat{\bm{a}}_i^T\hat{\bm{a}}_j)}(1+\delta).
    \end{align}
    In what follows we ignore the $(1+\delta)$ term for simplicity it does not
    affect the final bound. To begin with, we require an error bound on the
evaluation of $\bm{a}_i^T$, the rows of $J^{-1}$. Applying Lemma
    \ref{lemma:jacobian_inverse} yields
    \begin{align}
        \hat{\bm{a}}_i = \bm{a}_i + \Delta \bm{a}_i,\quad\text{where}\quad |\Delta
        \bm{a}_i|\leq c(d)\kappa_{2}(J)\kappa(K)u|\bm{a}_i|.
    \end{align}
    Invoking the inner product bound \eqref{eq:inner_prods} and Theorem
    \ref{th:detJ} gives
    \begin{align}
        \hat{G}_{ij} =
        |\widehat{\text{det}(J)}|\widehat{(\hat{\bm{a}}_i^T\hat{\bm{a}}_j)}=(|\text{det}(J)|
        + \Delta_{\text{absdet}})\left((\hat{\bm{a}}_i + \Delta
        \hat{\bm{a}}_i)^T(\hat{\bm{a}}_j)\right),
    \end{align}
    where
    \begin{align}
        |\Delta \hat{\bm{a}}| &\lesssim u|\hat{\bm{a}}_i|\lesssim
        u\left(1+u\kappa_{2}(J)\kappa(K)\right)|\bm{a}_i|,\\[0.5em]
        |\Delta_{\text{absdet}}| &\lesssim \kappa_2(J)
        \kappa(K)u|\textnormal{det}(J)|.
    \end{align}
    Pulling together all bounds for all perturbation terms we obtain that, to
    leading order in $u$,
    \begin{align}
        |\Delta G_{ij}|=|\hat{G}_{ij} - G_{ij}| \lesssim&\ |\det(J)|\left(
        |\bm{a}_i|^T|\Delta\bm{a}_j| + |\Delta \bm{a}_i|^T|\bm{a}_j| + |\Delta
    \hat{\bm{a}}_i|^T|\bm{a}_j|\right) +
    |\Delta_{\text{absdet}}||\bm{a}_i|^T|\bm{a}_j|\\[0.5em]
            \lesssim&\ (1 +
            \kappa_2(J)\kappa(K))u|\textnormal{det}(J)||\bm{a}_i|^T|\bm{a}_j|\lesssim
            \kappa_2(J)\kappa(K)u|\textnormal{det}(J)||\bm{a}_i|^T|\bm{a}_j|.
    \end{align}
    The above yields \eqref{eq:thm_geometry_1} since $|\bm{a}_i|^T|\bm{a}_j| =
    (|J^{-1}||J^{-T}|)_{ij}$. Furthermore, the same result holds if we replace
    the absolute value with the $2$-norm, which gives
    \begin{align}
        \label{eq:_thm_geometry_1}
        \lVert\Delta
        G\rVert_2\lesssim\kappa_2(J)\kappa(K)u|\textnormal{det}(J)|\lVert\
        |J^{-1}||J^{-T}|\ \rVert_2.
    \end{align}
    To conclude the proof, we note that
    \begin{align}
        |\textnormal{det}(J)|^{-1}\lVert \tilde{G} \rVert_2 = \lVert\ |J^{-1}||J^{-T}|\ \rVert_2 &\leq \left( \lVert\ |J^{-1}||J^{-T}|\
        \rVert_1\ \  \lVert\ |J^{-1}||J^{-T}|\
    \rVert_{\infty}\right)^{1/2} \leq \lVert J^{-1}\rVert_1\lVert J^{-1}
    \rVert_{\infty}\\[0.5em]
    &\leq d \lVert J^{-1} \rVert_2^2 = d\lVert J^{-1} J^{-T}
    \rVert_2 = d\lVert G \rVert_2 |\textnormal{det}(J)|^{-1}.
    \end{align}
    Plugging the above into \eqref{eq:_thm_geometry_1} yields
    \begin{align}
        \lVert \Delta G \rVert_{\max} \leq \lVert \Delta G \rVert_2
        \lesssim u\kappa(K)\kappa_2(J)\lVert G \rVert_{2} \lesssim u\kappa(K)\kappa_2(J)\lVert G \rVert_{\max},
    \end{align}
    which is \eqref{eq:thm_geometry_2} and the thesis.
\end{proof}

\begin{remark}
    The analysis developed here should easily generalize to different forms and
    elements since geometry tensors often include products of entries of $J$
    and/or $J^{-1}$ as well as powers of $|\textnormal{det}(J)|$
    (see e.g., \cite{kirby2007efficient,rognes2010efficient}).
\end{remark}

\subsection{Mixed-precision construction of local matrices and vectors}
\label{subsec:MPmatrix_construction}

We now focus on the evaluation of the matrices and vectors appearing in
Algorithms 1 and 2. Here, we consider a mixed-precision implementation in which
different precisions are used for different computations. In particular, we employ
the precisions in the following list.

\paragraph{List 1. Precisions used in the analysis:}
\begin{itemize}
    \item Precision $u_p$ to tabulate and evaluate basis functions and functions
        in FE approximation subspaces.
    \item Precision $u_g$ to compute the geometry tensor (including the
        polynomial evaluations used to compute $J$).
    \item Precision $u_q$ to perform and accumulate the matrix-matrix and
        matrix-vector products into $A$ and $\bm{v}$ respectively.
    \item Precision $u_s$ with $\max(u_p,u_g,u_q)\leq u_s$ to store $B$, $\bm{r}_s$, and
        $C$, and to compute $H$.
\end{itemize}
Including a storage precision $u_s$ is useful as it describes reduced-precision
storage strategies and the possible decoupling of memory and computation
floating-point formats. Indeed, reducing the storage precision may lead to
memory savings as well as better cache use. Furthermore, AMX and tensor core
accelerators do perform mixed-precision computations in which the arrays are
stored at a lower precision than the arithmetic precision.

We remark that the use of multiple precisions is only a tool for deriving a
finer-grained analysis and that we do not necessarily advocate using four
different floating-point formats. For instance, it is sufficient to set
$u_p=u_g=u_q=u_s$ to obtain a single (in the sense of one) working precision
result.  In Section \ref{subsec:MP_strategy}, we will show how these precision
can be chosen to tailor our analysis to AMX- and tensor-core-accelerated
kernels.

\paragraph{Construction of $C$.}
We first focus on the construction of the fourth-order tensor
$C\in\mathbb{R}^{n_d\times n_d \times n_q \times n_q}$, which we recall is given
by
\begin{align}
    C_{st}=\textnormal{diag}_{q=1}^{n_q}(\omega_qG_{st}(\X_q)\check{z}(\X_q))\in\mathbb{R}^{n_q\times n_q},
\end{align}
where $s,t\in \{1,\dots,n_d\}$. A forward error bound is given by the following
theorem:

\begin{theorem}
    \label{th:C_MP_eval}
    Let the assumptions of Lemma \ref{lemma:polyval_lagrange_specific} hold for
    the Lagrange basis and let the assumptions of Theorems \ref{th:detJ} and
    \ref{th:poisson_geometry} hold for the mass and Poisson form respectively.
    Let $\tilde{G}$ be as in Theorem \ref{th:poisson_geometry} for Poisson and
    $\tilde{G}\equiv G$ for the mass form. Let $G(\X)$ be evaluated in precision
    $u_g$, $\check{z}(\X)=\bm{z}^T\bm{\Phi}(\X)$ be evaluated with precision $u_p$ and then cast to
    precision $u_g$, and $C$ be computed using precision $u_g$ and then rounded
    to precision $u_s$. Then, if $\max(u_g,u_p)\leq u_s$, the evaluation of $C$
    yields instead $\hat{C}$ satisfying
    \begin{align}
        \hat{C} = C + \Delta C,\quad\text{where}\quad\ |\Delta C| \lesssim u_s|C| + u_p\Theta^C + c(d)u_g \Gamma^C + O(u_s^2 + (u_g + u_p)^2),
    \end{align}
    Where $\Theta^C,\Gamma^C$ are fourth order tensors of the same sizes and
    sparsity pattern of $C$. They are given by
    \begin{align}
        \Theta^C_{st} = p^d\kappa(V_{\K})\lVert \bm{z}
        \rVert_{\infty}\textnormal{diag}_{q=1}^{n_q}\left(|\omega_qG_{st}(\X_q)|\right),\quad
        \Gamma^C_{st}=\kappa(K)\ \textnormal{diag}_{q=1}^{n_q}\left(\kappa_2(J(\X_q))|\omega_q\tilde{G}_{st}(\X_q)\check{z}(\X_q)|\right).
    \end{align}
    If $\check{z}(\X)$ is constant, the bound holds with $\Theta^C = 0$.
\end{theorem}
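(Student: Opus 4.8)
The plan is to track the rounding errors through the three-stage computation of $C_{st}$: first evaluate $G(\X_q)$ in precision $u_g$, then evaluate $\check z(\X_q)$ in precision $u_p$ and cast it to $u_g$, then form the products $\omega_q G_{st}(\X_q)\check z(\X_q)$ in precision $u_g$ and finally round to $u_s$. I would write $\widehat{C}_{st}$ as a diagonal matrix whose $q$-th entry is the floating-point evaluation of $\omega_q G_{st}(\X_q)\check z(\X_q)$, and then decompose the error entrywise into three contributions: (i) the final rounding to storage precision, contributing a relative error bounded by $u_s$, hence a term $\lesssim u_s|C|$; (ii) the error inherited from the finite-precision evaluation of $\check z$, which by \eqref{eq:coroll_polyval_lagrange_specific_3} in Lemma \ref{lemma:polyval_lagrange_specific} is bounded by $p^d u_p \kappa(V_{\K})\lVert\bm z\rVert_\infty$ in absolute value (note this is an \emph{absolute} bound, not relative — that is why $\Theta^C$ has the structure it does, with $|\omega_q G_{st}(\X_q)|$ multiplying it rather than $|\omega_q G_{st}(\X_q)\check z(\X_q)|$); and (iii) the error inherited from the finite-precision evaluation of $G(\X_q)$, which by Theorem \ref{th:detJ} (mass) or Theorem \ref{th:poisson_geometry} (Poisson) is bounded by $c(d)u_g\kappa(K)\kappa_2(J(\X_q))\tilde G_{st}(\X_q)$. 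The remaining local roundoffs (forming the two products $\omega_q\cdot G_{st}\cdot\check z$, the cast from $u_p$ to $u_g$, the stray $(1+\delta)$ terms) contribute only $O(u_g|C|)$ and $O(u_p|C|)$, which can be absorbed into the $u_s|C|$ term since $\max(u_g,u_p)\le u_s$.

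Concretely, I would fix $s,t,q$ and write, to leading order,
\begin{align*}
    \widehat{(\omega_q G_{st}(\X_q)\check z(\X_q))}
    &= \omega_q\,\widehat{G}_{st}(\X_q)\,\widehat{\check z}(\X_q)(1+\theta)\\
    &= \omega_q\big(G_{st}(\X_q)+\Delta G_{st}(\X_q)\big)\big(\check z(\X_q)+\Delta\check z(\X_q)\big)(1+\theta),
\end{align*}
with $|\theta|\lesssim u_g$ absorbing the product/cast roundoffs. Expanding and discarding the second-order cross terms $\Delta G\,\Delta\check z$, $\theta\,\Delta G$, $\theta\,\Delta\check z$ (all $O((u_g+u_p)^2)$ after using the bounds on $\Delta G,\Delta\check z$), the dominant contributions are $\omega_q\,\Delta G_{st}(\X_q)\,\check z(\X_q)$, $\omega_q\,G_{st}(\X_q)\,\Delta\check z(\X_q)$, and $\omega_q\,G_{st}(\X_q)\check z(\X_q)\theta$. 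Then I substitute $|\Delta G_{st}(\X_q)|\le c(d)u_g\kappa(K)\kappa_2(J(\X_q))\tilde G_{st}(\X_q)$ (from Theorem \ref{th:detJ} or \ref{th:poisson_geometry}, recalling $\tilde G\equiv G$ for the mass form) into the first term to produce exactly $c(d)u_g\Gamma^C_{st}$; substitute $\max_{\X}|\Delta\check z(\X)|\lesssim p^d u_p\kappa(V_{\K})\lVert\bm z\rVert_\infty$ into the second term to produce $u_p\Theta^C_{st}$; and bound the third term, together with the final rounding to $u_s$, by $\lesssim u_s|C_{st}|$. Assembling over all $s,t,q$ and collecting the discarded terms into $O(u_s^2+(u_g+u_p)^2)$ gives the claimed bound. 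The case of constant $\check z$ is immediate: then $\check z$ is exactly representable (or the tabulation error vanishes), so $\Delta\check z=0$ and $\Theta^C=0$.

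The main obstacle is bookkeeping rather than conceptual: one must be careful that the error bound on $\Delta\check z$ is an \emph{absolute} (uniform over $\K$) bound, so when it multiplies $\omega_q G_{st}(\X_q)$ the resulting term is $p^d\kappa(V_{\K})\lVert\bm z\rVert_\infty$ times $|\omega_q G_{st}(\X_q)|$ — \emph{not} times $|\omega_q G_{st}(\X_q)\check z(\X_q)|$ — which is precisely the structure of $\Theta^C$ and the reason it differs from $\Gamma^C$. One must likewise be careful to use $\tilde G$ rather than $G$ in $\Gamma^C$ so that the mass and Poisson cases are treated uniformly, and to verify that all the neglected cross terms are genuinely $O(u_s^2+(u_g+u_p)^2)$ after the first-order bounds on $\Delta G$ and $\Delta\check z$ are inserted (this uses $\kappa(K),\kappa_2(J),p^d\kappa(V_{\K})\lVert\bm z\rVert_\infty$ being $O(1)$ relative to $1/u$, i.e.\ the ``$u$ sufficiently small'' hypothesis). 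No new ideas beyond the cited lemmas and the standard model \eqref{eq:std_floating_point_error_model} are needed.
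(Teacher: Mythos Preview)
Your proposal is correct and follows essentially the same approach as the paper: write $\hat C_{stqq}=\omega_q(G_{st}+\Delta G_{st})(\check z+\Delta\check z)(1+\theta^g)(1+\delta^s)$, expand to first order, and bound the three contributions using Lemma~\ref{lemma:polyval_lagrange_specific} for $\Delta\check z$ and Theorems~\ref{th:detJ}/\ref{th:poisson_geometry} for $\Delta G$, absorbing the $O(u_g)$ product/cast roundoffs into the $u_s|C|$ term. One small correction on the constant-$\check z$ case: a constant need not be exactly representable, so $\Delta\check z$ is not literally zero; rather $\hat{\check z}=(1+\delta_p)\check z$ with $|\delta_p|\le u_p\le u_s$, and this error is absorbed into the $u_s|C|$ term (which is how the paper argues it), yielding $\Theta^C=0$ in the bound.
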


\begin{proof}
    The evaluation in finite precision of $C_{stqq}$ yields
    \begin{align}
        \omega_q(G_{st}(\X_q) + \Delta G_{st}(\X_q))(\check{z}(\X_q) + \Delta \check{z}(\X_q))(1+\theta^g_4)(1+\delta^s),
    \end{align}
    where $|\theta^g_4|\leq cu_g$ for some constant $c\geq4$ and
    $|\delta^s|<u_s$, $\Delta G_{st}=\hat{G}_{st}-G_{st}$, and $\Delta
    \check{z}(\X_q)=\hat{\check{z}}(\X_q)-\check{z}(\X_q)$. Here $\theta_4^g$ represents the error due to
    casting $\check{z}(\X_q)$ and $\omega_q$ to precision $u_g$ as well as the error in the
    multiplications forming $C$. The quantity $\delta^s$ accounts for the final
    rounding of $C$ to precision $u_s$. The evaluation error is thus given by
    \begin{align}
        \label{eq:_theorem_C_1}
        |\hat{C}_{stq}-C_{stq}| \leq |C|(|\theta^g_4| + |\delta_s|) +
        \bar{c}|\Delta \check{z}(\X_q)||\omega_qG_{st}(\X_q)| +
        \bar{c}|\omega_q\check{z}(\X_q)||\Delta G_{st}| + O(u_s^2 + (u_g + u_p)^2),
    \end{align}
    where $\bar{c}\geq 1$ is a constant such that we can upper bound
    $|(1+\theta^g_4)(1+\delta_s)|\leq \bar{c}$. We now have that $|\theta_4^g| +
    |\delta_s| \leq \tilde{c}u_s$ for some $\tilde{c}\geq 1$ since $u_g\leq
    u_s$ and we can now invoke our previous results to bound all the other
    quantities appearing in the bounds. Indeed, the quantity $|\Delta G_{st}|$ is
    bounded as described by Theorem \ref{th:poisson_geometry} (for Poisson) and
    by Lemma \ref{th:detJ} (for the mass form since
    $G_{st}=|\textnormal{det}(J)|$ in this case), while $|\Delta \check{z}(\X_q)|$
    satisfies the same bound as in equation
    \eqref{eq:coroll_polyval_lagrange_specific_3}. Therefore, it holds that
    \begin{align}
        |\Delta \check{z}(\X_q)|\leq \bar{c}_1p^du_p\kappa(V_{\K})\lVert \bm{z}
        \rVert_{\infty},\quad |\Delta G_{st}| \leq
        \bar{c}_2(d)u_g\kappa(K)\kappa_2(J(\X_q))|\tilde{G}_{st}(\X_q)|,
    \end{align}
    for some generic constants $\bar{c}_1,\bar{c}_2(d)\geq 1$. Plugging the
    above into \eqref{eq:_theorem_C_1} yields the thesis. Note that if $\check{z}(\X)$
    is constant, then $\Delta \check{z}(\X) = (1+\delta_p)\check{z}(\X)$ for all $X$ where
    $|\delta_p|\leq u_p \leq u_s$ so we can absorb the error in the evaluation
    of $\check{z}$ into the $O(u_s)$ term in \eqref{eq:_theorem_C_1} and set
    $\Theta^C=0$.
\end{proof}

\paragraph{Construction of $\bm{r}_s$.}
A similar result holds for actions, but first let us recall the expression for
$\bm{r}_s$:
\begin{align}
    \bm{r}_s &= \textnormal{vstack}_{q=1}^{n_q}\left(\omega_q\check{z}(\X_q)G(\X_q)\check{w}(\X_q)\right),&&\hspace{-3cm}\text{(mass form)},\\
    \bm{r}_s &= \textnormal{vstack}_{q=1}^{n_q}\left(\omega_q\check{z}(\X_q)(G(\X_q)\nabla\check{w}(\X_q))\right),&&\hspace{-3cm}\text{(Poisson form)},
\end{align}
where $s=1,\dots,n_d$. For easiness of notation, we set
\begin{align}
    R = [\bm{r}_1 | \dots | \bm{r}_{n_d}].
\end{align}
An error bound for the mixed-precision computation of
$\bm{r}_s$ is given by the following theorem:
\begin{theorem}
    \label{th:s_MP_eval}
    Let the assumptions of Lemma \ref{lemma:polyval_lagrange_specific} hold for
    the Lagrange basis and let the assumptions of Theorems \ref{th:detJ} and
    \ref{th:poisson_geometry} hold for the mass and Poisson form respectively.
    Let $\tilde{G}$ be as in Theorem \ref{th:poisson_geometry}. Let $G(\X)$ be
    evaluated in precision $u_g$, $\check{z}(\X)=\bm{z}^T\bm{\Phi}(\X)$,
    $\check{w}(\X)=\bm{w}^T\bm{\Phi}(\X)$ and $\nabla \check{w}(\X)$ be
    evaluated with precision $u_p$ and then cast to precision $u_g$, and
    $\bm{r}_{s}$ be computed using precision $u_g$ and then rounded to precision
    $u_s$. Then, if $\max(u_g,u_p)\leq u_s$, the evaluation of $R$ yields
    instead $\hat{R}$ satisfying
    \begin{align}
        \hat{R} = R + \Delta
        R,\quad\text{where}\quad\ |\Delta R| \lesssim
        u_s|R| + u_p\Theta^{R} + c(d)u_g \Gamma^{R} + O(u_s^2 + (u_g + u_p)^2).
    \end{align}
    If $\check{z}(\X)$ is constant, then the same bound holds after replacing
    $\lVert \bm{z} \rVert_{\infty}$ with zero.  Here
    $\Theta^R,\Gamma^R\in\mathbb{R}^{n_q\times n_d}$. For the mass form, these
    are given entrywise by
    \begin{align}
        \Theta^R_{qs} = p^d\kappa(V_{\K})\left(|\check{w}(\X_q)|\lVert \bm{z}
        \rVert_{\infty} + |\check{z}(\X_q)|\lVert \bm{w} \rVert_{\infty}\right)|\omega_qG(\X_q)|,\quad\quad
        \Gamma^R_{qs} = \kappa(K)\kappa_2(J(\X_q))|R_{sq}|,
    \end{align}
    while for the Poisson form it holds 
    \begin{align}
        \Theta^R_{qs} &= p^d|\omega_q|
        \left(\kappa(V_{\K})\lVert \bm{z}
        \rVert_{\infty}(|G(\X_q)\nabla\check{w}(\X_q)|) +
    |\check{z}(\X_q)|\lVert \bm{w}
\rVert_{\infty}\left(|G(\X_q)|\left[\kappa(\partial_{k}\bm{\Phi})\kappa(\partial_{k}V_{\K})\right]_{k=1}^{d}\right)
\right)_s,\\
        \Gamma^R_{qs} &=
        \kappa(K)\kappa_2(J(\X_q))|\omega_q\check{z}(\X_q)|(|\tilde{G}(\X_q)||\nabla
        \check{w}(\X_q)|)_s.
    \end{align}
\end{theorem}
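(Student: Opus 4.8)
The plan is to mirror exactly the structure of the proof of Theorem \ref{th:C_MP_eval}, since the construction of $\bm{r}_s$ is entirely analogous to the construction of $C$: both are formed by multiplying together form coefficient evaluations, geometry tensor evaluations, and quadrature weights, computed in precision $u_g$ and then rounded to storage precision $u_s$. First I would write out, for a fixed quadrature node $\X_q$ and index $s$, the finite-precision evaluation of $R_{sq}$ as an exact product of perturbed factors times an accumulated $(1+\theta^g)(1+\delta^s)$ term, where $\theta^g = O(u_g)$ collects the casting-to-$u_g$ errors and the multiplication errors (there are a fixed, $d$-dependent number of multiplications, so the subscript on $\theta^g$ is $O(d)$), and $|\delta^s| < u_s$ accounts for the final rounding to storage precision. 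The leading-order expansion then gives, entrywise,
\begin{align*}
    |\hat{R}_{sq} - R_{sq}| \lesssim u_s|R_{sq}| + \bar{c}\big(\text{coefficient-evaluation error terms}\big) + \bar{c}|\omega_q\check{z}(\X_q)|\,|\Delta(\text{geometry factor})|_s + O(u_s^2 + (u_g+u_p)^2),
\end{align*}
exactly as in \eqref{eq:_theorem_C_1}, using $u_g \le u_s$ to fold $|\theta^g| + |\delta^s|$ into a single $O(u_s)$ term.

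Next I would bound each contribution using the results established earlier. For the geometry factor: in the mass case the relevant object is $|\textnormal{det}(J)(\X_q)|$ and Theorem \ref{th:detJ} gives the bound $|\Delta_{\textnormal{absdet}}| \lesssim u_g\kappa(K)\kappa_2(J)|\textnormal{det}(J)|$, producing the $\Gamma^R_{qs} = \kappa(K)\kappa_2(J(\X_q))|R_{sq}|$ term; in the Poisson case the relevant object is $G(\X_q)\nabla\check{w}(\X_q)$, and combining Theorem \ref{th:poisson_geometry} (which bounds $|\Delta G|$ in terms of $\tilde G$) with the error in $\nabla\check{w}$ from \eqref{eq:coroll_polyval_lagrange_specific_4} gives the $\kappa(K)\kappa_2(J(\X_q))|\omega_q\check{z}(\X_q)|(|\tilde G(\X_q)||\nabla\check{w}(\X_q)|)_s$ term. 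For the coefficient-evaluation errors: $|\Delta\check{z}(\X_q)|$ is bounded via \eqref{eq:coroll_polyval_lagrange_specific_3} by $p^d u_p\kappa(V_\K)\lVert\bm{z}\rVert_\infty$, and $|\Delta\check{w}(\X_q)|$ the same way with $\lVert\bm{w}\rVert_\infty$; crucially, for the Poisson form $\nabla\check{w}$ appears and its error is bounded via \eqref{eq:coroll_polyval_lagrange_specific_4} by $p^d u_p\kappa(\partial_k\bm{\Phi})\kappa(\partial_k V_\K)\lVert\bm{w}\rVert_\infty$ componentwise in $k$. Since $\bm{r}_s$ involves products of two (mass: $\check z$ and $\check w$) or two (Poisson: $\check z$ and the $s$-component of $G\nabla\check w$) coefficient-type factors, applying the product rule $|\Delta(fg)| \le |f||\Delta g| + |g||\Delta f| + \text{h.o.t.}$ produces the two-term sums visible in the definitions of $\Theta^R_{qs}$ for both forms. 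Assembling everything and absorbing the $(1+\kappa_2(J)\kappa(K))$ prefactor into $\kappa_2(J)\kappa(K)$ (as done in the proof of Theorem \ref{th:C_MP_eval}) yields the stated bound, and the constant-$\check z$ case follows because then $\Delta\check z = \delta_p\check z$ with $|\delta_p| \le u_p \le u_s$ is absorbable into the $O(u_s|R|)$ term, killing the $\lVert\bm{z}\rVert_\infty$ contributions in $\Theta^R$.

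The main obstacle — really the only non-routine point — is bookkeeping the Poisson case correctly: there, $R_{sq}$ is the $s$-th component of the vector $\omega_q\check z(\X_q)(G(\X_q)\nabla\check w(\X_q))$, so one must carefully track how errors in $G$ (a $d\times d$ matrix, whose entrywise error is controlled by $\tilde G$ via Theorem \ref{th:poisson_geometry}) and errors in each component of $\nabla\check w$ (each with its own condition number $\kappa(\partial_k\bm\Phi)\kappa(\partial_k V_\K)$) propagate through the matrix–vector product $G\nabla\check w$ and the subsequent scalar multiplication by $\check z$. One has to be consistent about whether the bound is stated entrywise in the $s$-index and about which norm of $\bm\Phi$-derivatives enters; the notation $(\cdot)_s$ in the theorem statement signals that the entire bracketed expression is a $d$-vector whose $s$-th entry is taken. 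Matching the bound in \eqref{eq:matvec_higham} for the matrix–vector product $G\nabla\check w$ with the geometry and polynomial bounds, and verifying that no extra factors of $d$ or $\kappa$ slip in beyond what is absorbed into $c(d)$, is the part that requires care; everything else is a direct transcription of the proof of Theorem \ref{th:C_MP_eval}.
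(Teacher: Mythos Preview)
Your proposal is correct and follows essentially the same approach as the paper: the mass case is dismissed as a direct rerun of the proof of Theorem \ref{th:C_MP_eval}, and for the Poisson case the paper (like you) first isolates the computation of $G\nabla\check{w}$, applies the matrix--vector product bound \eqref{eq:matvec_higham} together with Theorem \ref{th:poisson_geometry} and \eqref{eq:coroll_polyval_lagrange_specific_4} to obtain an intermediate bound on $|\widehat{G\nabla\check{w}} - G\nabla\check{w}|$, and then plugs this into the same expansion as in \eqref{eq:_theorem_C_1}. The bookkeeping concern you flag is exactly the non-routine point in the paper's argument as well.
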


\begin{proof}
    The proof for the mass form is essentially the same as the one of Theorem
    \ref{th:C_MP_eval} and we thus omit it. The only difference is the extra
    $\check{w}(\X_q)$ term which leads to the $(|\check{w}(\X_q)|\lVert \bm{z}
    \rVert_{\infty} + |\check{z}(\X_q)|\lVert \bm{w} \rVert_{\infty})$ term in
    the bound for $\Theta^R$.

    We now consider the Poisson form. We start by analyzing the errors arsing
    from $G\nabla\check{w}$. Owing to the error bound for matrix-vector products
    (cf.~\eqref{eq:matvec_higham}) it holds that
    \begin{align}
        (\widehat{G\nabla \check{w}})_s = ((\hat{G} + \Delta \hat{G})\widehat{\nabla
        \check{w}})_s = \left((G + \Delta G + \Delta \hat{G})(\nabla \check{w} +
        (\widehat{\nabla \check{w}} - \nabla \check{w}))\right)_s(1+\delta_g),
    \end{align}
    where $(1+\delta_g)$ accounts for the rounding of $\nabla \check{w}$ to
    precision $u_g$ and the term $|\widehat{\nabla \check{w}} - \nabla
    \check{w}|$ accounts for the error of evaluating $\nabla \check{w}$ at
    precision $u_p$, which can be bound using equation
    \eqref{eq:coroll_polyval_lagrange_specific_4} of Lemma
    \ref{lemma:polyval_lagrange_specific}. Here the quantity $\Delta G$
    satisfies the bound of Theorem \ref{th:poisson_geometry},
    $|\Delta\hat{G}|\lesssim u_g|G|$ due to equation \eqref{eq:matvec_higham},
    and $|\delta_g|<u_g$. Pulling all these results together, we obtain
    \begin{align}
        \label{eq:_theorem_s_1}
        |\widehat{G\nabla \check{w}} - G\nabla \check{w}| &\leq
        |G||\widehat{\nabla \check{w}} - \nabla \check{w}| + |\Delta G||\nabla
        \check{w}| + c(d)u_g|G\nabla \check{w}| + O((u_g+u_p)^2)\\ &\lesssim
        p^du_p|G|\left[
        \kappa(\partial_{k}\bm{\Phi})\kappa(\partial_{k}V_{\K})\right]_{k=1}^d\lVert
        \bm{w} \rVert_{\infty} + c(d)u_g
        \kappa(K)\kappa_2(J)|\tilde{G}||\nabla \check{w}| + O((u_g+u_p)^2),
        \notag 
    \end{align}
    where $c(d)\geq 1$ and we have omitted the $\X_q$-dependency for easiness of
    notation. In the above, the first term on the right-hand side bounds
    $|G||\widehat{\nabla \check{w}} - \nabla \check{w}|$ and is a consequence of
    taking the infinity norm of \eqref{eq:coroll_polyval_lagrange_specific_4}.
    The second term instead bounds $|\Delta G||\nabla \check{w}|$ and is a
    consequence of Theorem \ref{th:poisson_geometry}. In \eqref{eq:_theorem_s_1}
    the term $c(d)u_g|G||\nabla \check{w}|$ was absorbed into $c(d)$ since
    $|G|\leq |\tilde{G}|$ by Theorem \ref{th:poisson_geometry}.
    
    Turning to the evaluation of $R$, we have that
    \begin{align}
        \hat{R}_{qs} = \omega_q(\check{z}(\X_q)+\Delta \check{z}(\X_q))(G\nabla \check{w} + (\widehat{G\nabla \check{w}} - G\nabla \check{w}))(1+\theta^g_4)(1+\delta_s),
    \end{align}
    where $\Delta \check{z}(\X_q)$ is the error in the evaluation of
    $\check{z}(\X_q)$ at precision $u_p$ and can be bound using Lemma
    \ref{lemma:polyval_lagrange_specific}. Note that the expression above is
    in the same form as equation \eqref{eq:_theorem_C_1} and it can thus be
    bound using the same procedure and by applying \eqref{eq:_theorem_s_1}. The
    result reads
    \begin{align}
        \notag
        |\hat{R}_{qs} - R_{qs}|\lesssim{}& u_s|R_{qs}| \\
        \notag
        &+ p^du_p|\omega_q| \left(
        |\check{z}(\X_q)|\lVert
\bm{w}\rVert_{\infty}|G(\X_q)|\left[
        \kappa(\partial_{k}\bm{\Phi})\kappa(\partial_{k}V_{\K})\right]_{k=1}^d + \kappa(V_{\K})\lVert \bm{z}
\rVert_{\infty}|G(\X_q)\nabla\check{w}(\X_q)|\right)\\
        &+ c(d)u_g
        \kappa(K)\kappa_2(J(\X_q))|\omega_q\check{z}(\X_q)|(|\tilde{G}(\X_q)||\nabla
        \check{w}(\X_q)|)_s + O(u_s^2 + (u_g+u_p)^2),
    \end{align}
    which is the thesis.
\end{proof}

\begin{remark}
    In Theorems \ref{th:C_MP_eval} and \ref{th:s_MP_eval} we have assumed that
    all functions belong to the same finite element approximation space for
    simplicity. The same results as in Theorems \ref{th:C_MP_eval} and
    \ref{th:s_MP_eval} hold, albeit with slightly different values for
    $\Theta^C$ and $\Theta^R$, if different FE approximation spaces are used in
    the same form.
\end{remark}

\paragraph{Construction of $B$ and $H$.}
The entries of $B$ are simply the values of the basis functions (for the mass
form) or their derivatives (for Poisson) at the quadrature nodes. Thus,
Lemma \ref{lemma:polyval_lagrange_specific} directly applies\footnote{For
Poisson, we take the max of $|\partial_s\phi_k(\X)|$ with respect to the
quadrature nodes rather than over the whole reference cell. This is a minimal
variation of the result in Lemma \ref{lemma:polyval_lagrange_specific} and does
not affect its proof.} and the
evaluation of $B$ at precision $u_p\leq u_s$ yields instead $\tilde{B}$ which
satisfies, ignoring constants,
\begin{align}
    |\tilde{B} - B| &\lesssim dpu_p|B| = u_p\Theta^B, &&\text{(mass form)},\\ 
    |\tilde{B}_{skq} - B_{skq}| &\lesssim
    dpu_p\kappa(\partial_{s}\phi_k)\max_i|B_{ski}|=u_p\Theta^B_{skq}, &&\text{(Poisson form)}, 
\end{align}
where $\Theta^B$ is a third-order tensor of the same shape as $B$.
Rounding the result to precision $u_s$ yields $\hat{B}$ satisfying
\begin{align}
    \label{eq:bounds_for_B}
    \hat{B} = B + \Delta B,\quad\text{with}\quad|\Delta B| \lesssim u_s|B| + u_p\Theta^B + O(u_s^2).
\end{align}

As far as
$H_{st}=C_{st}B_t^T$ is concerned, its entries are given by
\begin{align}
    H_{stqk} = C_{stq}B_{tkq},
\end{align}
where $s,l\in \{1,\dots,n_d\}$. Thus,
invoking Theorem \ref{th:C_MP_eval}, we obtain for the computation of $H_{stqk}$
at precision $u_s$,
\begin{align}
    \label{eq:bound_for_E}
    \hat{H} = H + \Delta H,\quad\text{where}\quad\ |\Delta
    H| \lesssim u_s|H| + u_p\Theta^H + c(d)u_g \Gamma^H + O(u_s^2 + (u_g + u_p)^2),
\end{align}
where $\Theta^H_{stkq} = \Theta^C_{stq} + \Theta^B_{tkq}$ and $\Gamma^H_{stkq} =
\Gamma^C_{stq}$ for all $s,t\in \{1,\dots, n_d\}$, $k=1,\dots,n_\phi$,
$q=1,\dots,n_q$.

\subsection{Mixed-precision accumulation of products into \texorpdfstring{$A$}{A} and \texorpdfstring{$v$}{v} and global assembly}
\label{subsec:final_matmats_sum}

\subsubsection{Full local kernel error bounds}
We are now ready for the last step in our error analysis: the accumulation of
matrix and vector products into the local matrix and vector $A$ and $\bm{v}$. We
present both results at the same time since their proofs are similar:

\begin{theorem}
    \label{th:A_MP_eval}
    Let the assumptions of Theorem \ref{th:C_MP_eval} hold and assume that the
    matrix-matrix products and their accumulation into $A$ are performed at
    precision $u_q\leq u_s$. Then, the mixed-precision computation of $A$ using
    the precisions described in List 1 yields instead $\hat{A}$ satisfying 
    \begin{align}
        |\hat{A} - A| \lesssim (u_s + (n_d^2 +
        n_q)u_q)\sum_{s=1}^{n_d}\sum_{t=1}^{n_d}|B_{s}||H_{st}| + u_p\Theta^A + c(d)u_g\Gamma^A + O(u_s^2 + (u_q + u_g + u_p)^2),
    \end{align}
    where $\Theta^A,\Gamma^A\in\mathbb{R}^{n_\phi\times n_\phi}$ are given by
    \begin{align}
        \Theta^A = \sum_{s=1}^{n_d}\sum_{t=1}^{n_d}(|B_{s}|\Theta^H_{st} +
        \Theta^B_{s}|H_{st}|),\quad
        \Gamma^A = \sum_{s=1}^{n_d}\sum_{t=1}^{n_d}|B_s|\Gamma^H_{st}.
    \end{align}
\end{theorem}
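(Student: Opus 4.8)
The plan is to regard step~4 of Algorithm~1 as a rounding-error perturbation of the exact identity $A=\sum_{s,t}B_sH_{st}$ in which the data are the already-perturbed tabulations $\hat B$ and $\hat H$ satisfying \eqref{eq:bounds_for_B} and \eqref{eq:bound_for_E}, and the matrix products and their accumulation are carried out in precision $u_q$. First I would bound the arithmetic error of step~4 alone, treating $\hat B,\hat H$ as exact inputs. Each product $\hat B_s\hat H_{st}$ is a matrix--matrix product with inner dimension $n_q$, so \eqref{eq:matmat_higham} gives a computed value equal to $\hat B_s\hat H_{st}+F_{st}$ with $|F_{st}|\le\gamma^{(q)}_{n_q}|\hat B_s||\hat H_{st}|$; accumulating the $n_d^2$ resulting matrices introduces at most $n_d^2-1$ further additions per entry, each carrying a factor within $u_q$ of unity, hence an extra factor bounded by $\gamma^{(q)}_{n_d^2}$. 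Distributing these accumulation factors onto the individual terms and combining the two $\gamma^{(q)}$ bounds via Lemma~\ref{lemma:higham_manipulation_theta_gamma} yields
\[
\hat A=\sum_{s=1}^{n_d}\sum_{t=1}^{n_d}\bigl(\hat B_s\hat H_{st}+E_{st}\bigr),\qquad |E_{st}|\lesssim (n_d^2+n_q)\,u_q\,|\hat B_s||\hat H_{st}|+O(u_q^2).
\]

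Next I would insert $\hat B_s=B_s+\Delta B_s$ and $\hat H_{st}=H_{st}+\Delta H_{st}$, expand, and drop the doubly-perturbed term $\Delta B_s\Delta H_{st}$ as higher order, obtaining
\[
\hat A-A=\sum_{s,t}\bigl(\Delta B_s\,H_{st}+B_s\,\Delta H_{st}\bigr)+\sum_{s,t}E_{st}+\text{(h.o.t.)}.
\]
For the first sum I would pass to entrywise absolute values and substitute the bounds $|\Delta B_s|\lesssim u_s|B_s|+u_p\Theta^B_s$ and $|\Delta H_{st}|\lesssim u_s|H_{st}|+u_p\Theta^H_{st}+c(d)u_g\Gamma^H_{st}$; collecting terms and recognising the definitions $\Theta^A=\sum_{s,t}(|B_s|\Theta^H_{st}+\Theta^B_s|H_{st}|)$ and $\Gamma^A=\sum_{s,t}|B_s|\Gamma^H_{st}$ reproduces exactly $u_s\sum_{s,t}|B_s||H_{st}|+u_p\Theta^A+c(d)u_g\Gamma^A$ up to higher-order remainders. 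For the second sum I would use $|\hat B_s||\hat H_{st}|=|B_s||H_{st}|+O\big((u_s+u_p+u_g)(\cdots)\big)$ to replace $|\hat B_s||\hat H_{st}|$ by $|B_s||H_{st}|$ inside the $u_q$-scaled term, giving $\sum_{s,t}|E_{st}|\lesssim (n_d^2+n_q)u_q\sum_{s,t}|B_s||H_{st}|$ to leading order. Adding the two contributions and merging the two coefficients multiplying $\sum_{s,t}|B_s||H_{st}|$ into $u_s+(n_d^2+n_q)u_q$ gives the claimed bound, with all products of two of the small quantities $u_s,u_q,u_g,u_p$ collected into $O(u_s^2+(u_q+u_g+u_p)^2)$.

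The main obstacle is not any individual estimate but the bookkeeping of the remainder: one must verify that every cross term $u_\bullet u_\bullet$ — in particular those carrying the potentially large $\kappa$- and $dp$-type factors hidden in $\Theta^B,\Theta^H,\Gamma^H$, and the products of two perturbation tensors — is legitimately absorbed into the stated $O(\cdot)$ term, and that replacing $|\hat B_s||\hat H_{st}|$ by $|B_s||H_{st}|$ in the $u_q$ term only costs such higher-order error. A secondary point to pin down is the count $n_d^2+n_q$: it relies on the products $\hat B_s\hat H_{st}$ being formed before the $n_d^2$ matrices are summed, so that the two $\gamma^{(q)}$ factors add rather than compound into $\gamma^{(q)}_{n_d^2 n_q}$, which is precisely the structure prescribed by step~4 of Algorithm~1.
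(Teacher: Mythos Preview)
Your proposal is correct and follows essentially the same route as the paper: the paper writes $\hat A=\sum_{s,t}\bigl((B_s+\Delta B_s)(H_{st}+\Delta H_{st})+\Delta A_{st}\bigr)(1+\theta^q_{n_d^2})$ in one line, applies \eqref{eq:matmat_higham} to bound $|\Delta A_{st}|\lesssim n_qu_q|\hat B_s||\hat H_{st}|=n_qu_q|B_s||H_{st}|+O(u_q(u_s+u_p+u_g))$, expands, and substitutes the bounds \eqref{eq:bounds_for_B}, \eqref{eq:bound_for_E} exactly as you do. Your separation into ``arithmetic error first, then data perturbation'' is only a cosmetic reordering of the same decomposition, and your remarks on the higher-order bookkeeping and on the additive (rather than multiplicative) count $n_d^2+n_q$ match the paper's treatment.
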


\begin{theorem}
    \label{th:v_MP_eval}
    Let the assumptions of Theorem \ref{th:s_MP_eval} hold and assume that the
    matrix-vector products and their accumulation into $\bm{v}$ are performed at
    precision $u_q\leq u_s$. Then, the mixed-precision computation of $\bm{v}$
    using the precisions described in List 1 yields instead $\hat{\bm{v}}$ satisfying
    \begin{align}
        |\hat{\bm{v}} - \bm{v}| \lesssim (u_s + (n_d +
        n_q)u_q)\sum_{s=1}^{n_d}|B_{s}||\bm{r}_s| + u_p\Theta^v + c(d)u_g\Gamma^v + O(u_s^2 + (u_q + u_g + u_p)^2),
    \end{align}
    where $\Theta^v,\Gamma^v\in\mathbb{R}^{n_\phi}$ are vectors given by
    \begin{align}
        \Theta^v = \sum_{s=1}^{n_d}(|B_{s}|\Theta^R_{s} + \Theta^B_{s}|\bm{r}_s|),\quad
        \Gamma^v = \sum_{s=1}^{n_d}|B_s|\Gamma^R_{s}.
    \end{align}
    Here $\Theta^R_s$ and $\Gamma^R_s$ denote the $s$-th column of $\Theta^R$
    and $\Gamma^R$ respectively.
\end{theorem}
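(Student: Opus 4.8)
The plan is to follow the proof of Theorem~\ref{th:A_MP_eval} almost verbatim, the only structural change being that matrix--vector products replace the triple matrix products, so the combinatorial factor coming from the arithmetic is $n_d+n_q$ rather than $n_d^2+n_q$. I would start from the observation that step~3 of Algorithm~2 computes $\hat{\bm{v}}=\widehat{\sum_{s=1}^{n_d}\hat{B}_s\hat{\bm{r}}_s}$, where $\hat{B}$ is the stored (precision-$u_s$) tabulation tensor satisfying \eqref{eq:bounds_for_B} and $\hat{\bm{r}}_s$ is the $s$-th column of the stored tensor $\hat{R}$ from Theorem~\ref{th:s_MP_eval}. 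Since $u_q\le u_s$, feeding these inputs into the precision-$u_q$ arithmetic costs at most a relative perturbation of size $u_q\le u_s$, which is harmlessly absorbed into the $u_s$ term below; in the AMX/tensor-core case it costs nothing, since the low-precision arrays are consumed natively by the fused-multiply-add unit.

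First I would isolate the pure arithmetic error of the accumulation. Applying the matrix--vector error model \eqref{eq:matvec_higham} to each product $\hat{B}_s\hat{\bm{r}}_s$ (whose entries are length-$n_q$ inner products) and then to the summation of the resulting $n_d$ vectors, and collapsing the $(1+\delta)$ chains with Lemmas~\ref{lemma:higham_theta_gamma}--\ref{lemma:higham_manipulation_theta_gamma}, gives entrywise $\hat{v}_j=\sum_{s=1}^{n_d}\sum_{q=1}^{n_q}\hat{B}_{sjq}\hat{r}_{sq}(1+\theta^q_{n_d+n_q})$ with $|\theta^q_{n_d+n_q}|\le\gamma^q_{n_d+n_q}\lesssim(n_d+n_q)u_q$. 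Taking absolute values and using $|\hat{B}_s|\le|B_s|+O(u_s)$ and $|\hat{\bm{r}}_s|\le|\bm{r}_s|+O(u_s+u_g+u_p)$ then yields $\big|\hat{\bm{v}}-\sum_{s=1}^{n_d}\hat{B}_s\hat{\bm{r}}_s\big|\lesssim(n_d+n_q)u_q\sum_{s=1}^{n_d}|B_s||\bm{r}_s|$ up to higher-order terms.

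Second I would propagate the input errors. Expanding the exact product of the perturbed inputs, $\sum_{s=1}^{n_d}\hat{B}_s\hat{\bm{r}}_s-\bm{v}=\sum_{s=1}^{n_d}(\Delta B_s\,\bm{r}_s+B_s\,\Delta\bm{r}_s)+O(\Delta B\,\Delta\bm{r})$, so $\big|\sum_{s=1}^{n_d}\hat{B}_s\hat{\bm{r}}_s-\bm{v}\big|\le\sum_{s=1}^{n_d}\big(|\Delta B_s||\bm{r}_s|+|B_s||\Delta\bm{r}_s|\big)$ plus higher-order terms. Substituting the entrywise bound \eqref{eq:bounds_for_B} for $|\Delta B_s|$ and the bound of Theorem~\ref{th:s_MP_eval} for $|\Delta\bm{r}_s|=|\Delta R_s|$, the $u_s$ contributions merge with the $u_q$ term of the previous step into $(u_s+(n_d+n_q)u_q)\sum_{s=1}^{n_d}|B_s||\bm{r}_s|$; the $u_p$ contributions assemble into $u_p\sum_{s=1}^{n_d}(\Theta^B_s|\bm{r}_s|+|B_s|\Theta^R_s)=u_p\Theta^v$; the $u_g$ contribution becomes $c(d)u_g\sum_{s=1}^{n_d}|B_s|\Gamma^R_s=c(d)u_g\Gamma^v$; and everything else is $O(u_s^2+(u_q+u_g+u_p)^2)$. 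The constant-$\check{z}$ case follows immediately from the corresponding clause of Theorem~\ref{th:s_MP_eval}, and the batched version (Remark~\ref{rem:cell_batching}) needs nothing new, since the whole argument applies columnwise to $V^{\text{batch}}$.

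The delicate point, and the step I expect to be the main obstacle, is the remainder bookkeeping: one must verify that every product of the $\gamma^q_{n_d+n_q}$ factor from the first step with a first-order perturbation from $\Delta B$ or $\Delta\bm{r}$, together with the $\Delta B\,\Delta\bm{r}$ cross term and the tails already carried by \eqref{eq:bounds_for_B} and Theorem~\ref{th:s_MP_eval}, is genuinely $O(u_s^2+(u_q+u_g+u_p)^2)$ and can be dropped without inflating the leading-order constants. Equally important is checking that the $n_q$- and $n_d$-dependence stays confined to the $u_q$ term, i.e.\ that $\Theta^v$ and $\Gamma^v$ pick up no further $n_q$ or $n_d$ factor; this structural property is exactly what the paper's robustness claims rely on.
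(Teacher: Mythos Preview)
Your proposal is correct and follows essentially the same approach as the paper, which proves Theorem~\ref{th:v_MP_eval} by the identical argument as Theorem~\ref{th:A_MP_eval} after replacing $H_{st}$ by $\bm{r}_s$, $n_d^2$ by $n_d$, and invoking Theorem~\ref{th:s_MP_eval} in place of the bound for $\Delta H$. Your two-step split (arithmetic error, then input propagation) is just a minor reorganisation of the paper's single expansion $\hat{\bm{v}}=\sum_s((B_s+\Delta B_s)(\bm{r}_s+\Delta\bm{r}_s)+\Delta\bm{v}_s)(1+\theta^q_{n_d})$.
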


\begin{proof}
    The proof of both theorems is essentially the same since the forward bound for matrix-matrix products in \eqref{eq:matmat_higham} also holds for matrix-vector products. Therefore, we only prove Theorem \ref{th:A_MP_eval}. We have that
    \begin{align}
        \hat{A} = \sum_{s=1}^{n_d}\sum_{t=1}^{n_d}\left((B_s + \Delta B_s)(H_{st} + \Delta H_{st}) + \Delta A_{st}\right)\left(1 + \theta_{n_d^2}^q\right),
    \end{align}
    where $|\Delta B_s|$ satisfies \eqref{eq:bounds_for_B} and $|\Delta H_{st}|$
    satisfies \eqref{eq:bound_for_E}. Here, $\theta_{n_d^2}^q$ is the error due
    to the summations, bounded according to $|\theta_{n_d^2}^q|\lesssim n_d^2
    u_q$, and $\Delta A_{st}$ is the error from the matrix-matrix product which
    satisfies the bound (cf.~equation \eqref{eq:matmat_higham})
    \begin{align}
        |\Delta A_{st}|\lesssim n_qu_q|B_s + \Delta B_s||H_{st} + \Delta H_{st}| = n_qu_q|B_s||H_{st}| + O(u_q(u_s + u_p + u_g)).
    \end{align}
    Therefore
    \begin{align}
        |\hat{A} - A| \lesssim \sum_{s=1}^{n_d}\sum_{t=1}^{n_d}(|B_s||\Delta H_{st}| + |\Delta B_s||H_{st}| + (n_d^2u_q + n_qu_q)|B_s||H_{st}|) + O(u_s^2 + (u_q + u_g + u_p)^2).
    \end{align}
    Replacing $|\Delta B_s|$ and $|\Delta H_{st}|$ with their respective upper
    bounds (cf.~\eqref{eq:bounds_for_B} and \eqref{eq:bound_for_E}) yields the
    thesis. To prove Theorem \ref{th:v_MP_eval}, the same proof follows with the
    following modifications: Replace $n_d^2$ with $n_d$ (since there are only
    $n_d$ terms in the sum), $H_{st}$ with $\bm{r}_s$, and invoke Theorem
    \ref{th:s_MP_eval} in replacement of equation \eqref{eq:bound_for_E}.
\end{proof}

Since it may be difficult to keep track of all error terms appearing in Theorems
\ref{th:A_MP_eval} and \ref{th:v_MP_eval}, for the sake of clarity we condense and simplify the main results in
Corollary \ref{coroll:simplified_full_local_bound}. We first define the
following condition numbers:

\begin{align}
    \label{eq:_coroll_simplebounds_conds_q}
    \kappa^A_q &=  \dfrac{\left\lVert
    \sum_{s=1}^{n_d}\sum_{t=1}^{n_d}|B_{s}||H_{st}|\right\rVert_{\max}}{\lVert
    A \rVert_{\max}},\quad\ \ 
    \kappa^{\bm{v}}_q=\dfrac{\left\lVert
    \sum_{s=1}^{n_d}|B_{s}||\bm{r}_s|\right\rVert_{\infty}}{\lVert
\bm{v}\rVert_{\infty}},\\[0.5em]
            \kappa^A_p &= \frac{\lVert \Theta^A \rVert_{\max}}{\lVert A
    \rVert_{\max}},\quad \kappa^v_p = \frac{\lVert \Theta^v
    \rVert_{\infty}}{\lVert \bm{v}
    \rVert_{\infty}},\quad\quad
    \kappa^A_g = \frac{\lVert \Gamma^A \rVert_{\max}}{\lVert A
    \rVert_{\max}},\quad \kappa^v_g = \frac{\lVert \Gamma^v
    \rVert_{\infty}}{\lVert \bm{v}
    \rVert_{\infty}},
\end{align}
whose meaning is described as follows:
\begin{itemize}

    \item $\kappa^A_q$ and $\kappa^{\bm{v}}_q$, are the condition
        numbers in the max and infinity norm of the sum and accumulation of the
        products into $A$ and $\bm{v}$ respectively.

    \item $\kappa^A_p$ and $\kappa^{\bm{v}}_p$ represent the
        conditioning of the kernels with respect to the evaluation of the finite
        element (basis) functions and their derivatives.

    \item $\kappa^A_g$ and $\kappa^{\bm{v}}_g$ represent the conditioning of the
        kernels with respect to geometry computations.

\end{itemize}
We can now state the corollary:
\begin{corollary}
    \label{coroll:simplified_full_local_bound}
    Let the assumptions of Theorems \ref{th:A_MP_eval} and \ref{th:v_MP_eval} be
    satisfied. The mass and Poisson kernels evaluated in mixed-precision according to the
    precisions in List 1 yield the local matrix and vector
    $\hat{A}$ and $\hat{\bm{v}}$ satisfying the following error bounds:
    \begin{align}
        \label{eq:coroll_simplebounds}
        \lVert\hat{A} - A\rVert_{\max} &\lesssim \left((u_s + n_q
        u_q)\kappa_{q}^A + u_p\kappa_{p}^A + u_g\kappa_{g}^A\right)\lVert A \rVert_{\max},\\
        \lVert\hat{\bm{v}} - \bm{v}\rVert_{\infty} &\lesssim \left((u_s + n_q
        u_q)\kappa_{q}^{\bm{v}} + u_p\kappa_p^{\bm{v}} +
    u_g\kappa_g^{\bm{v}}\right)\lVert \bm{v} \rVert_{\infty}.
    \end{align}
\end{corollary}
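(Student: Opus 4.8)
The plan is to obtain Corollary \ref{coroll:simplified_full_local_bound} by a direct norm estimate applied to the entrywise bounds already established in Theorems \ref{th:A_MP_eval} and \ref{th:v_MP_eval}. I would first handle the bilinear-form case. All matrices appearing on the right-hand side of the bound in Theorem \ref{th:A_MP_eval} are nonnegative entrywise and $\lVert\cdot\rVert_{\max}$ is monotone under entrywise inequalities, so applying it to both sides and using the triangle inequality splits the estimate into three contributions: a \emph{quadrature/storage} term $(u_s + (n_d^2 + n_q)u_q)\,\lVert \sum_{s,t}|B_s||H_{st}|\rVert_{\max}$, a \emph{basis} term $u_p\lVert\Theta^A\rVert_{\max}$, and a \emph{geometry} term $c(d)u_g\lVert\Gamma^A\rVert_{\max}$, together with the remainder $O(u_s^2 + (u_q+u_g+u_p)^2)$.

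The second step is to rewrite each contribution in terms of $\lVert A\rVert_{\max}$ by multiplying and dividing by $\lVert A\rVert_{\max}$ and invoking the definitions in \eqref{eq:_coroll_simplebounds_conds_q}: the first term becomes $(u_s + (n_d^2+n_q)u_q)\,\kappa^A_q\,\lVert A\rVert_{\max}$, the second $u_p\,\kappa^A_p\,\lVert A\rVert_{\max}$, and the third $c(d)u_g\,\kappa^A_g\,\lVert A\rVert_{\max}$. Because $n_d\in\{1,d\}$ is a dimension-dependent $O(1)$ quantity, both $n_d^2$ and $c(d)$ are absorbed into the implicit constant carried by $\lesssim$ (recall the convention that $a\lesssim b$ permits a factor $c(d)$), whereas the number of quadrature nodes $n_q$ is genuinely variable and must be kept explicit. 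This collapses the bound to $\bigl((u_s + n_q u_q)\kappa^A_q + u_p\kappa^A_p + u_g\kappa^A_g\bigr)\lVert A\rVert_{\max}$; finally, since $u$ is assumed sufficiently small, the quadratic remainder is dominated by these linear terms and is dropped, yielding the first inequality of \eqref{eq:coroll_simplebounds}.

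The linear-form case is identical in structure: one applies $\lVert\cdot\rVert_{\infty}$ to the entrywise bound of Theorem \ref{th:v_MP_eval} (again using monotonicity of the vector $\infty$-norm), splits the right-hand side into the same three groups, normalizes by $\lVert\bm{v}\rVert_{\infty}$ to bring in $\kappa^{\bm{v}}_q$, $\kappa^{\bm{v}}_p$, $\kappa^{\bm{v}}_g$ from \eqref{eq:_coroll_simplebounds_conds_q}, absorbs the $n_d$ and $c(d)$ factors into the $\lesssim$-constant, and discards the higher-order remainder. I do not anticipate any genuine obstacle: the corollary is a repackaging of the preceding theorems rather than a new estimate. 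The only point requiring care is maintaining the distinction between the $O(1)$ dimensional factors $n_d$ and $c(d)$, which may legitimately be swallowed by $\lesssim$, and the truly variable quantities $n_q$, $p$, $\kappa(V_{\K})$, $\kappa(K)$, $\kappa_2(J)$, and so on, which remain inside the condition numbers $\kappa^A_q,\kappa^A_p,\kappa^A_g$ (and their $\bm{v}$ analogues) and must stay explicit so that the leading-order scaling advertised in the introduction is faithfully reported.
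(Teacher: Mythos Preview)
Your proposal is correct and follows exactly the same route as the paper's proof, which simply states: apply the max and infinity norms to the bounds in Theorems~\ref{th:A_MP_eval} and~\ref{th:v_MP_eval} and multiply and divide by $\lVert A\rVert_{\max}$ and $\lVert \bm{v}\rVert_{\infty}$ respectively. Your write-up is in fact more detailed than the paper's, correctly making explicit the absorption of the $n_d$ and $c(d)$ factors into the $\lesssim$ constant and the dropping of the higher-order remainder.
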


\begin{proof}
    Apply the max and infinity norm to the bounds in Theorems \ref{th:A_MP_eval}
    and \ref{th:v_MP_eval} and multiply and divide by $\lVert A \rVert_{\max}$
    and $\lVert \bm{v} \rVert_{\infty}$ respectively.
\end{proof}

\begin{remark}
    \label{rem:simplified_full_local_bound_1}
    Recalling previous results from this section (Theorems \ref{th:C_MP_eval},
    \ref{th:s_MP_eval}, \ref{th:A_MP_eval}, and \ref{th:v_MP_eval}), we note
    that the condition numbers $\kappa^A_g$ and $\kappa^{\bm{v}}_g$ are both
    proportional to the condition number of the Jacobian of the reference map:
    \begin{align}
        \kappa^A_g, \kappa^{\bm{v}}_g \propto \max\limits_{q}\kappa_2(J(\X_q)).
    \end{align}
    Thus, ill-conditioned Jacobians, e.g., due to knife elements, may lead to
    loss of accuracy.
\end{remark}

\begin{remark}
    \label{rem:simplified_full_local_bound_2}
    Again, backtracking the values of the condition numbers $\kappa^A_p$ and
    $\kappa^{\bm{v}}_p$ through Theorems \ref{th:C_MP_eval}, \ref{th:s_MP_eval},
    \ref{th:A_MP_eval}, and \ref{th:v_MP_eval}, we note that $\kappa^A_p$ and
    $\kappa^{\bm{v}}_p$ are proportional to $p^d\kappa(V_{\K})$ for both forms.
    For the Poisson form it also holds that
    \begin{align}
        \kappa^A_p &\propto p^d\kappa(V_{\K}) +
        dp\max_k\kappa(\partial_{k}\bm{\Phi}),\\
        \kappa^{\bm{v}}_p &\propto p^d\kappa(V_{\K}) +
        dp\max_k\kappa(\partial_{k}\bm{\Phi}) +
        \max_k\left(\kappa(\partial_{k}\bm{\Phi})
        \kappa(\partial_{k}V_{\K})\right).
    \end{align}
    Poorly conditioned reference basis functions and high-degree polynomials may
    thus lead to loss of accuracy.
\end{remark}

\subsubsection{Global assembly error bounds}
\label{subsubsec:assembly_error_bounds}

We now extend the error bounds in Theorems \ref{th:A_MP_eval} and
\ref{th:v_MP_eval} to a rounding error bound for global assembly:
\begin{corollary}
    \label{coroll:global_assembly}
    Let the assumptions of Theorems \ref{th:A_MP_eval} and \ref{th:v_MP_eval} be
    satisfied for all cells in $D_h$ for local cell matrices and vectors
    respectively. Let $D$ and $\bm{b}$ (defined in
    \eqref{eq:assembly_matrixvec_def}) be computed in mixed-precision according
    to the precisions listed in List 1. Let all cell contributions be assembled
    into global tensors $A^g=LDL^T$, $\bm{v}^g=Lb$ at precision $u_q$. If $L$,
    defined in \eqref{eq:assembly_matrixvec_def}, has at most $m_K$ nonzero
    entries\footnote{Another equivalent definition of $m_k$: it is the maximum
    number of local degrees of freedom that correspond to the same global degree
of freedom.} per row, it holds that
    \begin{align}
        |\hat{A}^g - A^g| &\leq L|\hat{D} - D|L^T + 2cm_Ku_q L|D|L^T + O(u_q(u_s + u_q + u_g + u_p)),\\
        |\hat{\bm{v}}^g - \bm{v}^g| &\leq L|\hat{\bm{b}} - \bm{b}| + cm_Ku_qL|\bm{b}|+ O(u_q(u_s + u_q + u_g + u_p)).
    \end{align}
\end{corollary}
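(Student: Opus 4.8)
The plan is to view the assembly step \eqref{eq:global_assembly} as one matrix--vector product $\bm{v}^g = L\bm{b}$ and one (two-stage) matrix--matrix product $A^g = LDL^T$ by the boolean assembly matrix $L$, and to combine the rounding error of these products with the per-cell kernel errors already bounded cellwise by Theorems \ref{th:A_MP_eval} and \ref{th:v_MP_eval} (note that $\hat{D}-D = \textnormal{diag}_{k=1}^{n_K}(\hat{A}^k - A^k)$ and $\hat{\bm{b}} - \bm{b} = \textnormal{vstack}_{k=1}^{n_K}(\hat{\bm{v}}^k - \bm{v}^k)$). The key observation is that, since $L \in \{0,1\}^{n_g \times n_\phi n_K}$, multiplication by $L$ or $L^T$ involves no nontrivial floating-point multiplications: each entry of $L\bm{x}$ or $LX$ is simply a sum of a subset of the entries of $\bm{x}$ (resp.\ $X$). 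Because $L$ has at most $m_K$ nonzeros per row and at most one per column, each such sum contains at most $m_K$ terms, so \eqref{eq:matvec_higham}--\eqref{eq:matmat_higham} apply in the sharpened form in which the inner dimension is replaced by $m_K$ (exact zeros contribute no rounding error): $\widehat{L\bm{x}} = (L + \Delta L)\bm{x}$ with $|\Delta L| \leq \gamma_{m_K}L$, $|\widehat{LX} - LX| \leq \gamma_{m_K}L|X|$, and likewise for right-multiplication by $L^T$, whose columns have at most $m_K$ nonzeros. Since assembly is carried out at precision $u_q$ we use $\gamma_{m_K} \leq c m_K u_q$ throughout.

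For the vector, I would write $\hat{\bm{v}}^g = \widehat{L\hat{\bm{b}}}$ and split $|\hat{\bm{v}}^g - \bm{v}^g| \leq |\widehat{L\hat{\bm{b}}} - L\hat{\bm{b}}| + L|\hat{\bm{b}} - \bm{b}| \leq \gamma_{m_K}L|\hat{\bm{b}}| + L|\hat{\bm{b}} - \bm{b}|$. Using $|\hat{\bm{b}}| \leq |\bm{b}| + |\hat{\bm{b}} - \bm{b}|$ and the fact that, by Theorem \ref{th:v_MP_eval}, $|\hat{\bm{b}} - \bm{b}|$ is of size $O(u_s + u_q + u_g + u_p)$ relative to $|\bm{b}|$, the cross term $\gamma_{m_K}L|\hat{\bm{b}} - \bm{b}|$ is $O(u_q(u_s + u_q + u_g + u_p))$ and is absorbed, leaving $|\hat{\bm{v}}^g - \bm{v}^g| \leq L|\hat{\bm{b}} - \bm{b}| + c m_K u_q L|\bm{b}| + O(u_q(u_s + u_q + u_g + u_p))$, which is the claimed bound.

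For the matrix, I would evaluate $A^g = LDL^T$ in two stages, say $P = \widehat{L\hat{D}}$ followed by $\hat{A}^g = \widehat{P L^T}$ (the bound is insensitive to the association). The first stage yields $P = L\hat{D} + E_1$ with $|E_1| \leq \gamma_{m_K}L|\hat{D}|$, and the second yields $\hat{A}^g = P L^T + E_2$ with $|E_2| \leq \gamma_{m_K}|P|L^T \leq \gamma_{m_K}(L|\hat{D}| + |E_1|)L^T$. Hence $|\hat{A}^g - A^g| \leq L|\hat{D} - D|L^T + |E_1|L^T + |E_2| \leq L|\hat{D} - D|L^T + (2\gamma_{m_K} + \gamma_{m_K}^2)L|\hat{D}|L^T$. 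Using $\gamma_{m_K} \leq c m_K u_q$, discarding the $O(u_q^2)$ term, splitting $L|\hat{D}|L^T = L|D|L^T + L|\hat{D} - D|L^T$, and absorbing $2\gamma_{m_K}L|\hat{D} - D|L^T$ into the remainder via Theorem \ref{th:A_MP_eval}, I obtain $|\hat{A}^g - A^g| \leq L|\hat{D} - D|L^T + 2 c m_K u_q L|D|L^T + O(u_q(u_s + u_q + u_g + u_p))$, as stated. Parallel or scatter-add implementations only reorder these summations and hence leave the worst-case bound unchanged.

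The proof is thus a direct combination of \eqref{eq:matvec_higham}--\eqref{eq:matmat_higham} with Theorems \ref{th:A_MP_eval}--\ref{th:v_MP_eval}; the two points needing care --- and the only places the argument could go wrong --- are (i) justifying the sparsity-refined constant $m_K$ in the backward-error bounds for multiplication by $L$ and $L^T$ (rather than the much larger full inner dimension $n_\phi n_K$), and (ii) systematically identifying every product of two error quantities ($\gamma_{m_K}|\hat{D} - D|$, $\gamma_{m_K}^2$, products of $\gamma_{m_K}$ with the $O(\cdot)$ remainders from the cell kernels) as a higher-order term absorbable into the $O(u_q(u_s + u_q + u_g + u_p))$ or $O(u_q^2)$ remainders.
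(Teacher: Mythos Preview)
Your proposal is correct and follows essentially the same approach as the paper: both exploit that $L$ is boolean with at most $m_K$ nonzeros per row (and one per column) so that multiplication by $L$ and $L^T$ reduces to sums of at most $m_K$ exactly-representable terms, yielding $\gamma_{m_K}$ in place of the full inner dimension in \eqref{eq:matvec_higham}--\eqref{eq:matmat_higham}, and both then separate the kernel error $\hat D - D$ from the assembly error and absorb cross terms into the higher-order remainder. The paper's write-up is slightly more compact---it starts from the leading-order decomposition $\hat A^g = L(\hat D - D)L^T + \widehat{LDL^T}$ rather than explicitly staging $P=\widehat{L\hat D}$ and $\hat A^g=\widehat{PL^T}$---but the underlying argument and the two ``points needing care'' you identify are exactly what the paper relies on.
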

\begin{proof}
    We only prove the result for $A^g$ since the proof for $\bm{v}^g$ is
    essentially the same. To leading order, we can write
    \begin{align}
        \hat{A}^g = L(\hat{D} - D)L^T + \widehat{LDL^T},
    \end{align}
    where the last term on the right-hand side only accounts for the error in
    the matrix-matrix products and not in the computation of $D$. We can then
    invoke equation \eqref{eq:matmat_higham} twice to bound the two
    matrix-matrix products. Doing so yields
    \begin{align}
        \widehat{LDL^T} = LDL^T + \Delta(LDL^T),\quad\text{where}\quad|\Delta(LDL^T)|\leq 2\gamma_{m_K}L|D|L^T + O(u_q^2).
    \end{align}
    The above bound is due to the fact that $L$ is boolean and sparse with at
    most $m_K$ ones per row and thus: 1) $L=|L|$, 2) $L$ is representable
    exactly, 3) The action of $L$ from the left and of $L^T$ from the right
    against $D$ yields a matrix whose entries are the result of a sum of no
    more than $m_K$ terms. We thus have, to leading order,
    \begin{align}
        |\hat{A}^g - A^g| &\leq L|\hat{D} - D|L^T + 2cm_Ku_qL|D|L^T,
    \end{align}
    which is the thesis. The result for $\bm{v}^g$ follows the same exact
    strategy, but comes without the factor $2$ since no multiplication by $L^T$
    is performed.
\end{proof}
The above corollary implies that the error in the global assembly is roughly
proportional to the sum of the errors of the local cell tensors. We make this
proportionality clearer with an analogous of Corollary
\ref{coroll:simplified_full_local_bound}, which shows that the same bound as for
the local cell tensors holds, albeit with $m_K^2$ and $m_K$ times larger
factors for bilinear and linear forms respectively:
\begin{corollary}
    \label{coroll:assembly_simplified_full_local_bound}
    Let the assumptions of Corollary \ref{coroll:global_assembly} hold.
    The global assembly of mass and Poisson kernels evaluated in mixed-precision according to the
    precisions in List 1 yield the global matrix and vector
    $\hat{A}^g$ and $\hat{\bm{v}}^g$ satisfying:
    \begin{align}
        \label{eq:coroll_simplebounds_global}
        \lVert\hat{A}^g - A^g\rVert_{\max} &\lesssim m_K^2\left((u_s + n_q
        u_q)\bar{\kappa}_{q}^A + u_p\bar{\kappa}_{p}^A + u_g\bar{\kappa}_{g}^A\right)\lVert A^g \rVert_{\max},\\
        \lVert\hat{\bm{v}}^g - \bm{v}^g\rVert_{\infty} &\lesssim m_K\left((u_s + n_q
            u_q)\bar{\kappa}_{q}^{\bm{v}} + u_p\bar{\kappa}_p^{\bm{v}} +
        u_g\bar{\kappa}_g^{\bm{v}}\right)\lVert \bm{v}^g \rVert_{\infty}.
    \end{align}
    Here $\bar{\kappa}^A_t\lVert A^g \rVert_{\max}=\max_k(\kappa^{A}_t\lVert A^k
    \rVert_{\max})$ and $\bar{\kappa}^v_t\lVert \bm{v}^g
    \rVert_{\infty}=\max_k(\kappa^{v}_t\lVert \bm{v}^k
    \rVert_{\infty})$ for $t\in\{q,p,g\}$.
\end{corollary}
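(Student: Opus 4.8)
The plan is to push the entrywise bounds of Corollary~\ref{coroll:global_assembly} through the $\max$- and $\infty$-norms, reducing the global quantities to local ones by means of the booleanity and sparsity of $L$, the block-diagonality of $D$, and the defining relations for the barred condition numbers; the only genuinely delicate point is controlling the ``background'' rounding produced by the assembly products themselves. I start from $|\hat{A}^g - A^g| \le L|\hat{D}-D|L^T + 2cm_K u_q\, L|D|L^T + O(\cdot)$ and its vector analogue. The first fact I use is that for a nonnegative matrix $X$ that is block diagonal with blocks indexed by the cells --- which covers $|D|$ and $|\hat{D}-D|$, the latter because the cell kernels are run independently (cf.\ Remark~\ref{rem:cell_batching}) --- one has $\lVert L X L^T \rVert_{\max} \le m_K \lVert X \rVert_{\max}$. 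Indeed, a given entry of $LXL^T$ is a sum of entries $X_{ij}$ over local degrees of freedom $i,j$ mapping to the two chosen global indices; for scalar Lagrange elements the cell-local-to-global map is injective on each cell, so each cell contributes at most one such term, and a global degree of freedom belongs to at most $m_K$ cells. (For an unstructured $L$ only the weaker bound $m_K^2$ is available, which is why the statement keeps $m_K^2$.) Likewise $\lVert L|\bm{x}|\rVert_\infty \le m_K\lVert \bm{x}\rVert_\infty$ since each row of $L$ carries at most $m_K$ ones, and, by block-diagonality, $\lVert \hat{D}-D\rVert_{\max} = \max_k\lVert \hat{A}^k - A^k\rVert_{\max}$, $\lVert D\rVert_{\max}=\max_k\lVert A^k\rVert_{\max}$, with the stacked-vector analogues for $\hat{\bm{b}}-\bm{b}$ and $\bm{b}$.

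Next I apply Corollary~\ref{coroll:simplified_full_local_bound} cell by cell. Writing $\kappa^A_q(k),\kappa^A_p(k),\kappa^A_g(k)$ for the per-cell condition numbers introduced before that corollary, it gives $\lVert\hat{A}^k - A^k\rVert_{\max}\lesssim \big((u_s+n_q u_q)\kappa^A_q(k)+u_p\kappa^A_p(k)+u_g\kappa^A_g(k)\big)\lVert A^k\rVert_{\max}$. Taking the maximum over $k$ and using $\max_k(a_k+b_k+c_k)\le\max_k a_k+\max_k b_k+\max_k c_k$ gives, for $\lVert\hat{D}-D\rVert_{\max}$, the quantity $(u_s+n_q u_q)\max_k(\kappa^A_q(k)\lVert A^k\rVert_{\max})+u_p\max_k(\kappa^A_p(k)\lVert A^k\rVert_{\max})+u_g\max_k(\kappa^A_g(k)\lVert A^k\rVert_{\max})$, which by the defining relations $\bar{\kappa}^A_t\lVert A^g\rVert_{\max}=\max_k(\kappa^A_t(k)\lVert A^k\rVert_{\max})$ equals $\big((u_s+n_q u_q)\bar{\kappa}^A_q+u_p\bar{\kappa}^A_p+u_g\bar{\kappa}^A_g\big)\lVert A^g\rVert_{\max}$. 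Multiplying by the $m_K$ fan-in of the previous step disposes of the $L|\hat{D}-D|L^T$ term; the linear-form case is identical, with one factor of $L$ instead of two and with $n_d$ in place of $n_d^2$ (already absorbed inside the cell bound).

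The main obstacle is the leftover term $2cm_K u_q\, L|D|L^T$ (respectively $c m_K u_q\, L|\bm{b}|$), which the above bounds only by $\lVert D\rVert_{\max}=\max_k\lVert A^k\rVert_{\max}$, not directly by $\lVert A^g\rVert_{\max}$, whereas assembly may in principle entail cancellation between cells. The resolution is the elementary observation that $\kappa^A_q(k)\ge 1$ and $\kappa^{\bm{v}}_q(k)\ge 1$ for every cell --- a consequence of the triangle inequality applied to $A=\sum_{s,t}B_sH_{st}$ and $\bm{v}=\sum_s B_s\bm{r}_s$ --- so that $\lVert D\rVert_{\max}=\max_k\lVert A^k\rVert_{\max}\le\max_k(\kappa^A_q(k)\lVert A^k\rVert_{\max})=\bar{\kappa}^A_q\lVert A^g\rVert_{\max}$, and similarly $\lVert\bm{b}\rVert_\infty\le\bar{\kappa}^{\bm{v}}_q\lVert\bm{v}^g\rVert_\infty$. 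Combining this with $u_q\le u_s\le u_s+n_q u_q$ (from List~1) and the $m_K$ fan-in of $L(\cdot)L^T$, the term is dominated by the $(u_s+n_q u_q)\bar{\kappa}_q$ contribution already present, at the cost of one extra power of $m_K$; this is precisely where the $m_K^2$ in the bilinear bound comes from, the linear bound keeping a single $m_K$. Tidying the $O(\cdot)$ remainders with Lemma~\ref{lemma:higham_manipulation_theta_gamma} then yields \eqref{eq:coroll_simplebounds_global}, and a fully symmetric argument in the $\infty$-norm gives the bound for $\hat{\bm{v}}^g$.
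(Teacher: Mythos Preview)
Your argument is correct and, for the bilinear case, more careful than the paper's. The paper simply discards the $2cm_K u_q\,L|D|L^T$ term as ``of negligible size'' and then bounds $\lVert L|\hat D-D|L^T\rVert_{\max}$ crudely by $m_K^2\lVert \hat D-D\rVert_{\max}$ (each entry being a sum of at most $m_K^2$ entries of $|\hat D-D|$). You instead exploit the block-diagonality of $|\hat D-D|$ to obtain the sharper $m_K\lVert \hat D-D\rVert_{\max}$ for the main contribution and recover the stated $m_K^2$ only through the leftover term, via the clean observation $\kappa_q^A\ge 1\Rightarrow\lVert D\rVert_{\max}\le\bar\kappa_q^A\lVert A^g\rVert_{\max}$. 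This is a genuine strengthening: the paper's route is shorter but leaves the ``negligible'' claim unjustified.

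There is one inconsistency in your linear-form case. Your own treatment of the leftover $cm_K u_q\,L|\bm b|$ gives $cm_K\cdot m_K\,u_q\lVert\bm b\rVert_\infty\le cm_K^2 u_q\,\bar\kappa_q^{\bm v}\lVert\bm v^g\rVert_\infty$, not a single power of $m_K$, so the assertion ``the linear bound keeping a single $m_K$'' does not follow from your reasoning. The paper gets $m_K$ there only because it drops this term outright. To match the stated $m_K$ rigorously you must either do the same (arguing, e.g., that $m_K u_q\lesssim u_s+n_q u_q$ in the high-degree regime $n_q\ge m_K$ the paper targets) or accept $m_K^2$ for the vector bound as well; either way the discrepancy is inherited from the statement rather than introduced by you.
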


\begin{proof}
    We ignore the $O(u_q)$ terms in Corollary \ref{coroll:global_assembly} since
    they are of negligible size. We thus bound, ignoring higher-order terms,
    \begin{align}
        \lVert\hat{A}^g - A^g\rVert_{\max} &\leq \lVert L|\hat{D}-D|L^T
        \rVert_{\max} \leq m_K^2\lVert \hat{D} - D \rVert_{\max}\\
        \lVert\hat{\bm{v}}^g - \bm{v}^g\rVert_{\infty} &\leq \lVert L|\hat{\bm{b}} -
        \bm{b}| \rVert_{\infty} \leq m_K\lVert \hat{\bm{b}} - \bm{b} \rVert_{\infty}.
    \end{align}
    Since each entry in $L|\hat{D}-D|L^T$ is a sum of at most $m_K^2$ entries of
    $|\hat{D}-D|$ and each entry of $L|\hat{\bm{b}}-\bm{b}|$ is a sum of at most
    $m_K$ entries of $|\hat{\bm{b}}-\bm{b}|$. The thesis then readily follows
    by applying Corollary \ref{coroll:simplified_full_local_bound} to the
    above, taking the max across all cells, and multiplying and dividing by
    $\lVert A^g \rVert_{\max}$ and $\lVert \bm{v}^g \rVert_{\infty}$
    respectively.
\end{proof}

\subsection{Mixed-precision strategy and hardware accelerators}
\label{subsec:MP_strategy}

We have derived a fine-grained error bound for mixed-precision kernel and
assembly computations which allows for different precision choices. In this
subsection we make our exposition simpler and more specific by restricting the
precision choice to two formats: a low-precision format (e.g., single or half
precision) $u$ and a higher-precision format (e.g., single or double) $u^2$.
The objective here is to choose precisions so as to obtain accurate enough
computations while keeping costs contained. 

\begin{remark}
    From now on, we focus on 3D high-polynomial degree implementations ($p\geq
    3$). The reason is that in low dimensions and for low-degree polynomials all
    kernel computations have roughly the same cost and the different error terms
    are more or less of comparable size: a mixed-precision implementation of
    Algorithms 1 and 2 would thus lead to little practical benefits.
    Furthermore, low-dimension/low-degree kernels are extremely cheap already.
    We leave the investigation of performant mixed-precision implementations of
    low-dimension/low-degree kernels to future work.
\end{remark}

\noindent The first decision we make is the following:

\begin{itemize}
    \item \textbf{Store tensors in reduced precision.} Set $u_s=u$.  \textit{Reason:}
    the error term associated with storage has the smallest constant. Furthermore,
    reduced-precision storage leads to memory savings and better cache usage. 
\end{itemize}

\begin{figure}[!htbp]
    \centering
    \includegraphics[width=0.8\textwidth]{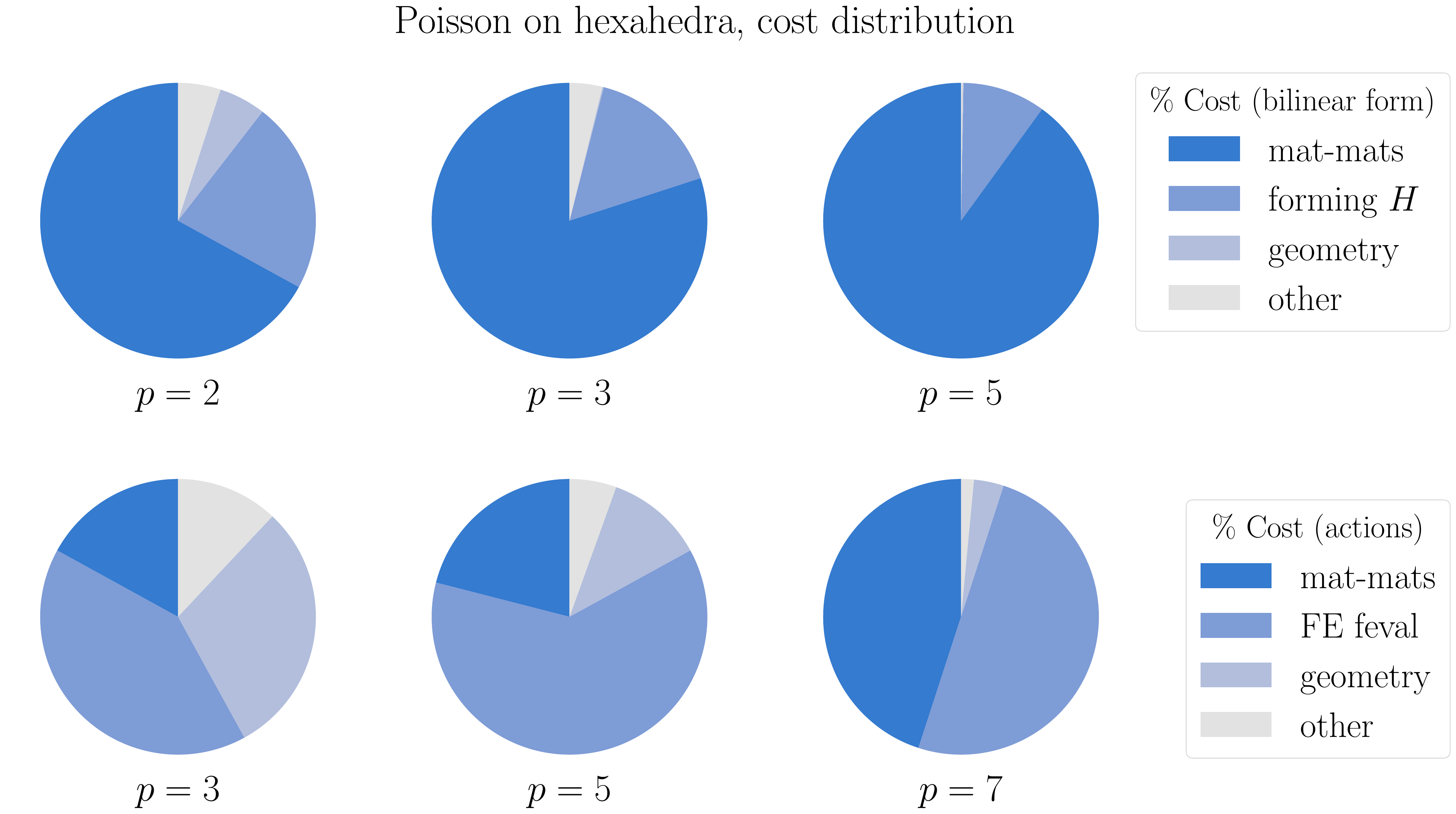}
    \caption{\textit{Pie charts of the computational cost distribution of
            different Poisson kernel subroutines on hexahedra. The results,
            entirely run in double precision, are shown for different polynomial
            degrees $p$. The top and bottom rows correspond to the Poisson
 bilinear form and action respectively. By ``mat-mats'' we indicate the cost
of accumulating the matrix-matrix products into $A$ and $\bm{v}$ and by ``FE
feval'' we denote the evaluation of $\nabla \check{w}(\X)$.}}
    \label{fig:pie_charts}
\end{figure}

To guide our next precision choices, we first run the kernels in double
precision and time each computational component. Taking the Poisson kernels on
hexahedra ($z(\bm{x})\equiv 1$, cell batch size of $16$ for actions) as a
representative example, we show these results in Figure \ref{fig:pie_charts}.
The cost distribution for the mass kernel and tetrahedra is similar (not shown).
For bilinear forms the bottleneck is always the accumulation of matrix-matrix
products into $A$. On the other hand, the evaluation of FE functions accounts
for the majority of the total cost of action kernels for low to medium $p$,
while for large $p$ the matrix-matrix product accumulation and the FE function
evaluations are both bottlenecks. 

A sensible criterion for designing a mixed-precision implementation is to employ
the high-precision format for all computations that are either 1) of negligible
cost or that 2) lead to the largest rounding errors. These considerations lead
to the following decisions:

\begin{itemize}
    \item \textbf{Use high-precision for geometry computations.} Set $u_g=u^2$.
        \textit{Reason:} Geometry computations are of negligible cost for
        moderate-to-high degree elements (cf.~Figure \ref{fig:pie_charts}), so a
        higher-precision implementation does not affect runtime. Furthermore,
        computing geometry tensors more accurately safeguards kernels from knife
        elements and ill-conditioned Jacobians.

    \item \textbf{Use high-precision for polynomial evaluations.} Use precision
        $u^2$ for tabulation. \textit{Reason:} FE basis functions are tabulated only
        once and for all in Algorithms 1 and 2 and thus the cost of this
        operation is typically negligible. Furthermore, the memory access costs
        of tabulating high-degree polynomials are minimized thanks to the
        reduced-precision storage.
\end{itemize}

There are still two precisions to be chosen: The precision of FE function
evaluations and the precision used for performing and accumulating the
matrix-matrix products into the local tensors $A$ and $\bm{v}$. The issue is
that these operations are at the same tame the computational bottlenecks of the
kernels (cf.~Figure \ref{fig:pie_charts}) and possibly the dominating sources of
rounding errors. Indeed, if we recall Corollary
\ref{coroll:simplified_full_local_bound} and Remarks
\ref{rem:simplified_full_local_bound_1} and
\ref{rem:simplified_full_local_bound_2}, these terms lead to rounding errors
growing like $p^du_p$ and $n_qu_q$ respectively and thus rapidly grow with the
polynomial degree. Fortunately, there is a way of obtaining high-accuracy
computations at a lower cost:

\begin{itemize}
    \item \textbf{Use mixed-precision accelerators for FE function evaluations
        and matrix-matrix products.} Set $u_p=u^2$ and $u_q=u^2$.
        \textit{Reason:} Mixed-precision vector and tensor accelerators perform
        high-precision vector and tensor operations between operands stored at
        low precisions. The resulting computational cost of these operations is
        typically lower than the corresponding fully high-precision
        computations.
\end{itemize}

\begin{remark}
    In some architectures the most performant (if not the only)
    tensor cores available may be mixed-precision cores \cite{fasi2021numerical}
    and thus the strategy of storing tensors in low precision and multiplying
    them in high (or mixed) precision may be the only option for exploiting
    these accelerators.
\end{remark}

The above precision selections lead to a mixed-precision kernel implementation
with mixed-precision accelerators that satisfies an error bound in the form
(cf.~Corollary \ref{coroll:simplified_full_local_bound}):
\begin{align}
    \label{eq:MP_simplified_final_local_bound}
    \text{relative error} \lesssim u\kappa_q + O(u^2),
\end{align}
where $\kappa_q$ stands for either $\kappa_q^A$ or $\kappa_q^v$. To leading
order, the above bound is thus independent from the conditioning of polynomial
evaluations and geometry computations. Furthermore, the bound is also
independent\footnote{Excluding the condition number $\kappa_q$ which may or may
    not hide some $p$ or $n_q$ dependency. In our numerical experiments
(cf.~Section \ref{sec:numerical_results}) it seems like it does not, at least
for the mass and Poisson forms.} from the polynomial degree and the number of
quadrature nodes. The relative error bound for the assembly is $m_K^2$ times
larger for bilinear forms and $m_K$ times larger for linear forms (actions),
cf.~\ref{coroll:assembly_simplified_full_local_bound}.

\begin{remark}
    We remark that using mixed-precision accelerators for FE function
    evaluations leads to a slight departure from our theory. Indeed, for the
    sake of simplicity we have not accounted for the error arising from rounding
    the function coefficients and the tabulated basis function values to a lower
    precision before performing the evaluation. This casting results in an
    additional error term in the bounds of Theorems \ref{th:A_MP_eval} and
    \ref{th:v_MP_eval} which is proportional to the conditioning of the
    polynomial basis:
    \begin{align}
        \propto u_s\kappa(V_{\K}),\quad&\text{for bilinear forms with coefficients and
        mass actions},\\
        \propto u_s(\kappa(V_{\K}) + \max_s\kappa(\partial_sV_{\K})),\quad&\text{for
        Poisson actions}.
    \end{align}
    The constants in these error terms are much smaller than in the polynomial
    evaluation terms in Theorems \ref{th:A_MP_eval} and \ref{th:v_MP_eval} since
    $p^d$ does not appear as a multiplicative factor.  Nevertheless, one must
    double check that the conditioning of the FE basis does not affect results:
    while $\kappa(V_{\K})$ is typically small provided that the basis is
    constructed accordingly \cite{isaac2020recursive},
    $\max_s\kappa(\partial_sV_{\K})$ could potentially be large, especially on
    tetrahedra. Nevertheless, we have not found this to be an issue in our
    numerical experiments (cf.~Figure \ref{fig:figure_errors_Poisson_both},
    bottom). In any case, the bounds in Corollaries
    \ref{coroll:simplified_full_local_bound} and
    \ref{coroll:assembly_simplified_full_local_bound} and in equation
    \eqref{eq:MP_simplified_final_local_bound} still hold as-is albeit with a
    larger condition number multiplying the $u_s$ term.
\end{remark}

\begin{remark}
    The multiplication of large matrices and vectors with tensor and vector
    accelerators must be blocked, i.e., decomposed into smaller matrix products
    so that each submatrix and subvector can fit within the accelerator
    registers. Blocking is beneficial from a rounding error viewpoint as it
    leads to smaller error constants
    \cite{blanchard2020class,blanchard2020mixed}. Nevertheless, we have ignored
    blocking effects in our analysis since these only affect the constants of
    the negligible $O(u^2)$ terms.
\end{remark}

\section{Numerical results}
\label{sec:numerical_results}

In this section we validate the rounding error analysis of Section
\ref{sec:rounding_error_analysis} and we test the efficiency and accuracy of
hardware accelerated mixed-precision kernel implementations.  The aim is to
establish the computational superiority of kernels exploiting matrix product
accelerators and to compare their accuracy with kernels implemented in different
precisions.  Specifically, we will show that AMX-accelerated kernels are up to
$60$ times faster than their double precision equivalents and up to hundreds of
times more accurate than fully half-precision kernels.

Here we do not investigate the rounding error behaviour of polynomial and FE
function evaluations for the sake of brevity: The evaluation of polynomials is a
classical subject in rounding error analysis and its rounding error behaviour
has been thoroughly documented (see e.g., Section 5 in \cite{higham2002accuracy} and
\cite{higham2004numerical,pena2000multivariate}). Similarly, the evaluation of
FE functions consists of an inner product (between coefficients and
basis functions) whose rounding error analysis is also classical,
see e.g., Section 3.1 in \cite{higham2002accuracy}.

\subsection{Experimental setup.}
\label{subsec:experimental_setup}

All experiments were performed on a single core of an Intel Xeon Platinum 8480+
processor (code name ``Sapphire Rapids'') which supports mixed-precision
single/bf16 precision vector instructions (AVX512-bf16) as well as matrix
multiplication instructions (AMX-bf16).  However, we found the overall bf16
intrinsics and compiler support to be limited. Circumventing these limitations
warranted specific implementation choices which we describe in Appendix
\ref{appendix_sec:implementation_details}.

We employed the FEniCSx
open-source FE software library \cite{BarattaEtal2023} (Basix in particular
\cite{BasixJoss,ScroggsEtal2022}) to tabulate the FE basis functions and to
compute the quadrature rules offline and in double precision. All kernels were
implemented in C++ in the C++23 standard and compiled using \texttt{g++} (GCC
version 14.2.0) with the compiler flags
\begin{center}
    \texttt{-std=c++23 -march=sapphirerapids -O3 -mprefer-vector-width=512}.
\end{center}
All kernels were validated via comparison with the FEniCSx built-in double
precision kernels which was also used to compute rounding errors. The
open-source code implementing the experiments is freely available on
GitHub\footnote{See
\url{https://github.com/croci/mpfem-paper-experiments-2024}.}.  For our
numerical computations we exclusively work in three dimensions and employ an
unstructured hexahedral mesh of a tetrahedron of coordinates
\begin{align*}
    \{(-5, 0, -5),\ (5, 0, -5), (0,-5,5), (0,5,5)\},
\end{align*}
and an unstructured tetrahedral mesh of the cube $[0,5]^d$. Both meshes were
generated using the open source mesh-generation software Gmsh
\cite{geuzaine2009gmsh} and have $n_K=55296$ and $n_K=63408$ hexahedral and
tetrahedral cells respectively. In order to keep the overall computational costs
contained even at high-degree polynomial degrees we only run the kernels on the
first $\tilde{n}_K$ cells, where $\tilde{n}_K$ is given by the formula
\begin{align*}
    \tilde{n}_K = n_{\text{batch}}\lfloor \min(2\times 10^6/n_\phi, n_K)/n_{\text{batch}}
    \rfloor.
\end{align*}
Here, we recall, $n_{K}$ is the total number of cells in the mesh, $n_\phi$ is
the number of cell degrees of freedom and $n_\text{batch}$ is the number of
cells batched together in action kernels. The number $2\times 10^6$ is a number
large enough to ensure cache spillover to the RAM. This choice prevents the CPU
from keeping all data in cache and thus ensures a fair comparison among kernels.
The division, floor, and multiplication by $n_{\text{batch}}$ is made for
convenience to ensure that the number of cells is a multiple of $n_{\text{batch}}$.
In our computations we use $n_{\text{batch}}=64$ which we found gives good
performance. With this choice, the above formula yields $\tilde{n}_K = n_K$ only
for $p=2$ for the hexahedral mesh and for $p\in\{2,3\}$ for the tetrahedral
mesh.  In our experiments we only consider polynomial degrees larger than $1$
and up to degree $10$ for tetrahedra and up to degree $7$ for hexahedra. These
upper bounds were arbitrarily chosen to keep the overall computational
time of our single-core experiments contained. As reference cells we take the
unit cube $[0,1]^3$ and the tetrahedron with vertices
\begin{align}
    (0,0,0),\quad (1,0,0),\quad (0,1,0),\quad (0,0,1).
\end{align}
We set $\check{\bm{z}}(\x)\equiv 1$ for all experiments since the effect of FE
function evaluations can already be observed in the action kernels.

\subsection{Geometry errors.}

We begin by analyzing the rounding error behaviour of geometry computations.
Our aim to establish the linear growth of the error with respect to
$\kappa_2(J)$ and thus validate the bounds of Theorem
\ref{th:poisson_geometry}. We only consider the Poisson geometry for the
sake of brevity and since results are similar. We start from a tetrahedron of coordinates
\begin{align}
    (1, 1, 0),\quad (1, 0, 1),\quad (\epsilon, 1, -1),\quad (0, 0, 0),
\end{align}
where $\epsilon>0$ is a constant used to tweak the Jacobian condition number. In
fact, the resulting Jacobian is 
\begin{align}
    \left[
    \begin{array}{ccr}
        1 & 1 & 0\\
        1 & 0 & 1\\
        \epsilon & 1 & -1
    \end{array}\right],
\end{align}
which is singular for $\epsilon=0$, has $\textnormal{det}(J)=\epsilon$ and
$\kappa_{\infty}(J)=(2+\epsilon)(3+\epsilon)/\epsilon$ (and therefore
$\kappa_2(J)\propto \frac{1}{\epsilon}$). We then scale the coordinates by
$3^{-1/2}$ and translate them by the translation vector $(1/3, 2/3, 4/3)$ so
that the transformed cell coordinates and Jacobian stop being exactly representable (this
transformation leaves the condition number unchanged). We monitor the 
relative rounding error in the max norm
\begin{align}
    \dfrac{\lVert G - \hat{G}\rVert_{\max}}{u \lVert G \rVert_{\max}},
\end{align}
as a function of the condition number $\kappa_2(J)$, where $\hat{G}$ is computed
using exclusively fp32 single precision arithmetic. Results are shown in Figure
\ref{fig:geometry_errors} which match what predicted by the theory. We remark
that the powers of ten on the $y$ axis indicate the number of digits lost in the
computation. For single precision we have roughly $7.5$ digits of accuracy and
the proportionality constant seems to be around $0.1$ for this cell so geometry
computations are safe in this case up to condition numbers of roughly
$10^{8.5}$. While we expect similar results to hold for fp16 half precision
arithmetic we report that the computation underflows for this precision (results
not shown).  We nevertheless predict that fp16 would lead to complete loss of accuracy much
earlier for condition numbers of only $10^{4.5}$. Mitigation of range issues via
scaling is likely possible, but outside the scope of this work (if the CPU
supported it here we would have just used bf16 which has a wider range).
Nevertheless these results show how using higher precision for geometry computations might be
beneficial, especially when the floating-point format has a narrow range or
a large unit roundoff.

\begin{figure}[!htbp]
    \centering
    \includegraphics[width=0.5\textwidth]{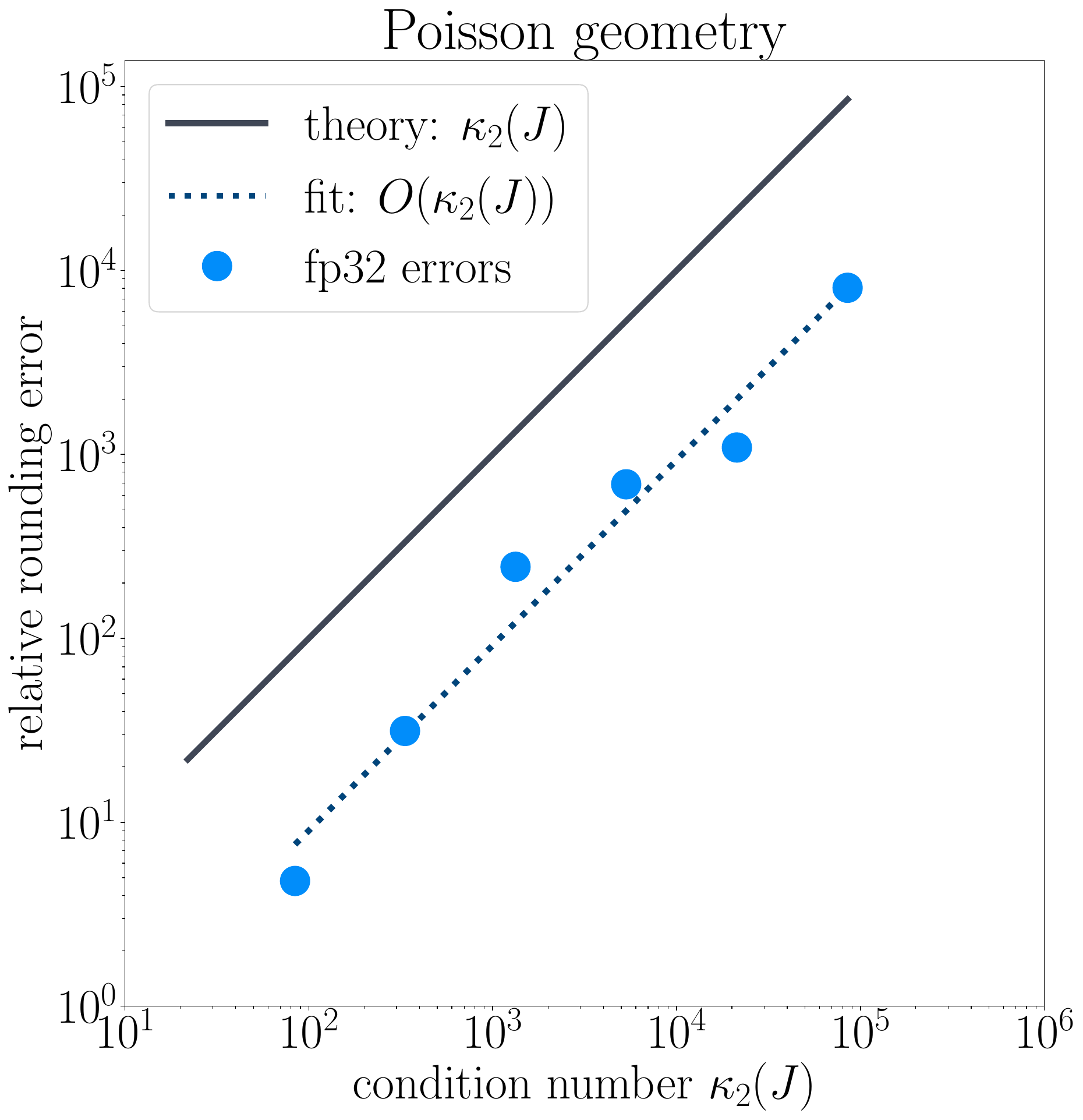}
    \caption{\textit{Relative rounding errors in the fp32 evaluation of the
    Poisson geometry as a function of the condition number of the reference map
    Jacobian. Fitting a line to the errors in the loglog plot yields the
    linear proportionality expected from Theorem \ref{th:poisson_geometry}.}}
    \label{fig:geometry_errors}
\end{figure}

\begin{remark}
    We remark that severely ill-conditioned Jacobians are not necessarily common in
    FE computations and only arise when cells are badly scaled and/or near
    degenerate. Nevertheless, such cells do appear in graded meshes, when boundary
    layers need to be resolved, and in adaptive refinement. When reduced-precision
    computations need to be performed on such meshes we suggest a simple
    mixed-precision strategy: tailor the geometry precision to the cell conditioning
    and selectively employ higher-precision on low-quality cells.
\end{remark}

\begin{figure}[!htbp]
    \centering
    \includegraphics[width=\textwidth]{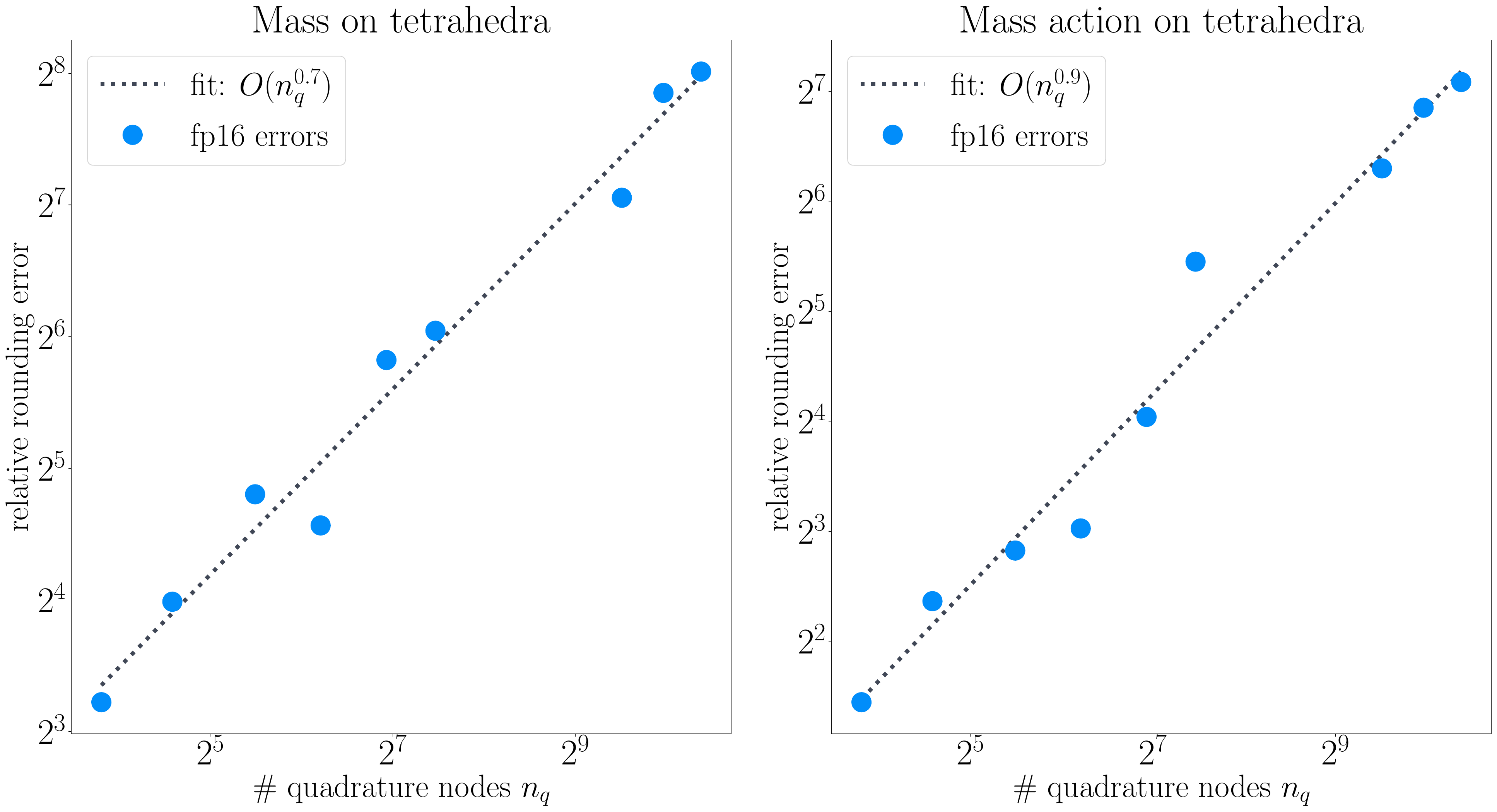}
    \caption{\textit{Relative rounding errors in mass kernel evaluations on
    tetrahedra versus the number of quadrature nodes $n_q$. Here the fp16 format
    is used in place of the precision $u_q$ and single precision is used in
    everything else.}}
    \label{fig:n_q_errors}
\end{figure}

\subsection{Errors in the accumulation of matrix-matrix products into the
local tensors.}

We now focus on the $O(u_q)$ error term in Theorems \ref{th:A_MP_eval} and
\ref{th:v_MP_eval}. Our objective here is to check whether we actually observe an
error growth rate of $O(n_q)$. The $O(n_q)$ error growth is a direct consequence
of \eqref{eq:matmat_higham} and is a worst-case error bound. However, it is
well-known that rounding errors may behave stochastically leading to error
cancellation and milder error growth rates \cite{higham2019new}. For instance,
for matrix-matrix products and rounding errors behaving as independent zero-mean
random variable the rate would be $O(n_q^{1/2})$ \cite{higham2019new}.
Here we plan to determine which rates are observed in practice when
running FE kernels.

For this purpose, we consider both mass kernels (bilinear form and action) on
the tetrahedral mesh (results for hexahedral meshes are similar and thus not
shown) and use single precision for storage, geometry, and polynomial/FE
function evaluations so that $u_s=u_g=u_p=2^{-24}$ and employ fp16 half
precision for the matrix-matrix products so that $u_q=2^{-11}$, cf.~Table
\ref{tab:precision}.  These precision choices lead to a total rounding error
dominated by the $O(u_q)$ term. We then monitor the relative rounding errors
\begin{align}
    \max_{k=1,\dots,\tilde{n}_K}\left(\dfrac{\lVert A^k - \hat{A}^k
    \rVert_{\max}}{u_q\lVert A^k \rVert_{\max}}\right),\quad
    \max_{k=1,\dots,\tilde{n}_K}\left(\dfrac{\lVert \bm{v}^k - \hat{\bm{v}}^k
    \rVert_{\infty}}{u_q\lVert \bm{v}^k \rVert_{\infty}}\right),
\end{align}
as a function of $n_q$. Results are shown in Figure \ref{fig:n_q_errors}. For
mass actions we do observe a rate close to $1$, indicating the expected error
growth. However, for bilinear forms the rate is lower ($0.7$) indicating that
some error cancellation is occurring. Checking the error behaviour on a
cell-by-cell basis leads to varying rates between $0.5$ and $1$ so we can only
conclude that the actual degree of error cancellation and growth rate with
respect to $n_q$ is cell- and form-dependent. This is not surprising, but it
means that the only option when implementing fully low-precision kernels is to
either determine experimentally the growth rate or to plan for the largest
possible error growth.

We remark that the base-ten logarithm of the error indicates the number of
digits lost in the computation. For fp16 this means that all precision is lost
when the error reaches $2^11$ so care must be taken for high polynomial degrees.
For instance, from the left figure in Figure \ref{fig:n_q_errors} we deduce that
at polynomial degrees $p=9,10$ we already have less than $1$ digit of accuracy.
These consideration further motivate the use of mixed-precision accelerators
that are able to efficiently compute the matrix-matrix products in higher precision.

\begin{figure}[!htbp]
    \centering
    \begin{subfigure}[h]{\textwidth}
        \centering
        \includegraphics[width=\textwidth]{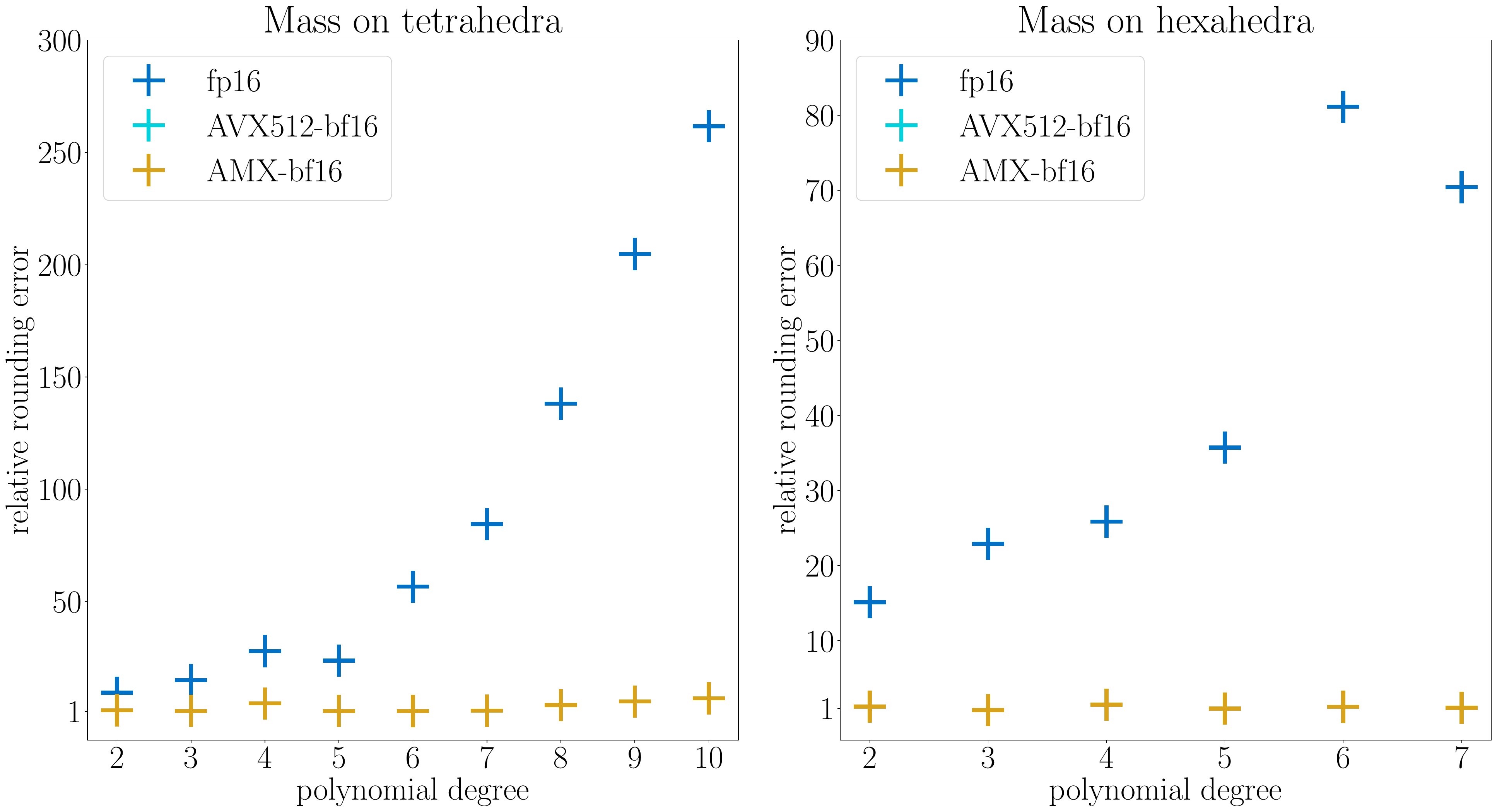}
    \end{subfigure}\\
    \vspace{6pt}
    \begin{subfigure}[h]{\textwidth}
        \centering
        \includegraphics[width=\textwidth]{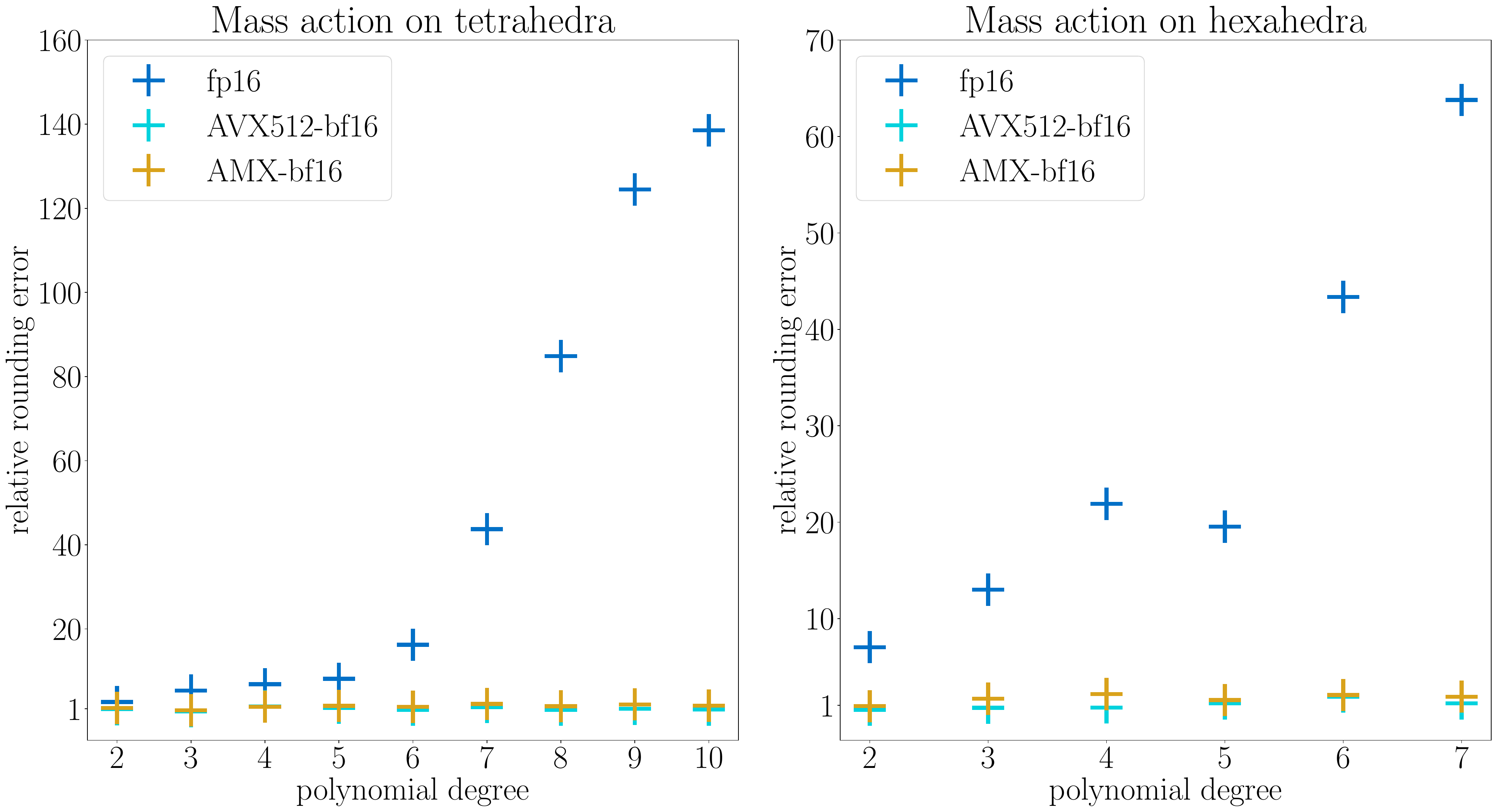}
    \end{subfigure}
    \caption{\textit{Relative rounding errors arising in the mass kernel
            evaluations via fp16 and fp32/bf16 mixed-precision plotted against
            the polynomial degree. The AVX512-bf16 and AMX-bf16 are almost
            overlapping. Note that the fp16 errors grow with $p$ while the
            mixed-precision errors are constant.}}
    \label{fig:figure_errors_mass_both}
\end{figure}

\begin{figure}[!htbp]
    \centering
    \begin{subfigure}[h]{\textwidth}
        \centering
        \includegraphics[width=\textwidth]{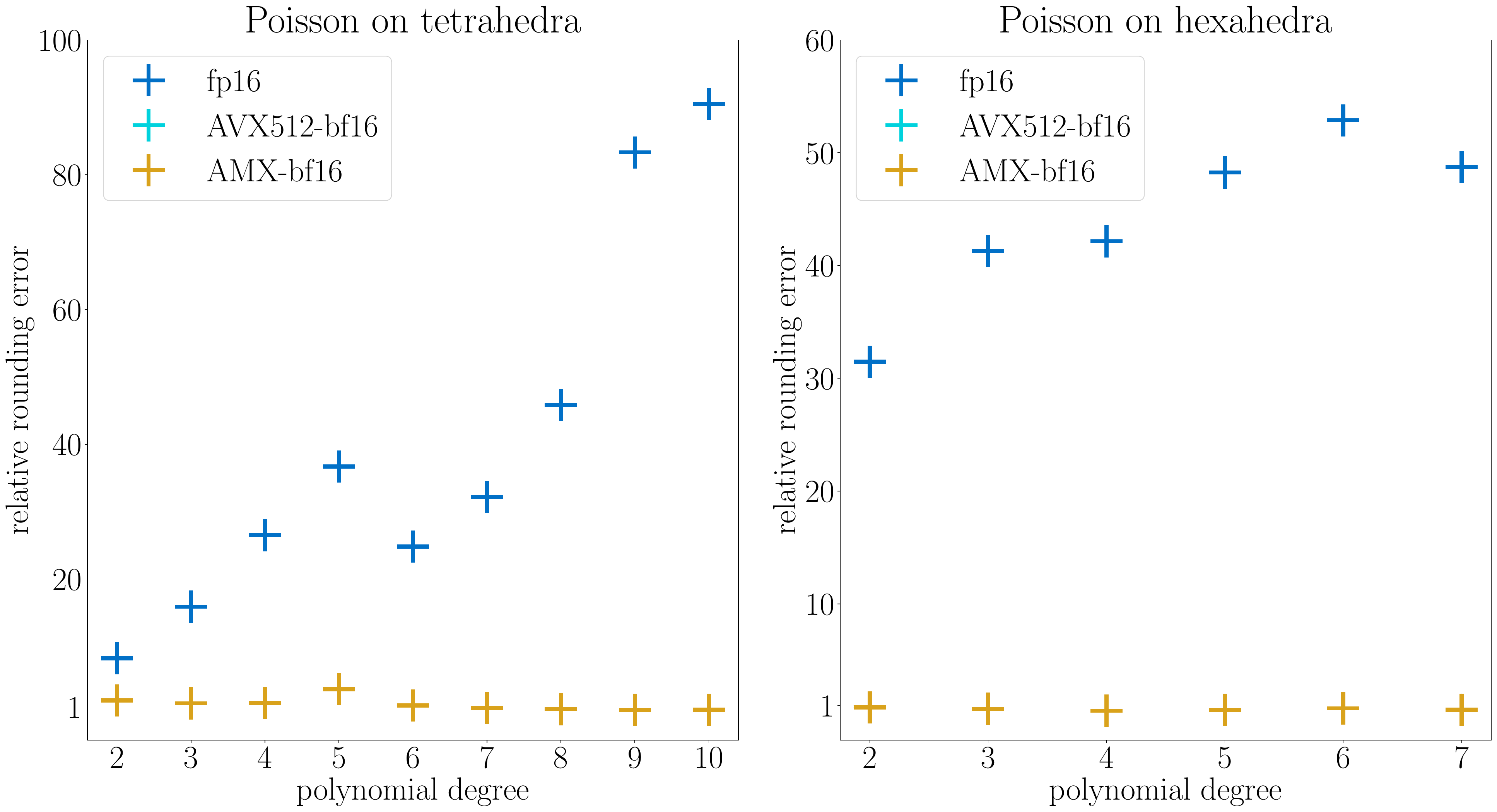}
    \end{subfigure}\\
    \vspace{6pt}
    \begin{subfigure}[h]{\textwidth}
        \centering
        \includegraphics[width=\textwidth]{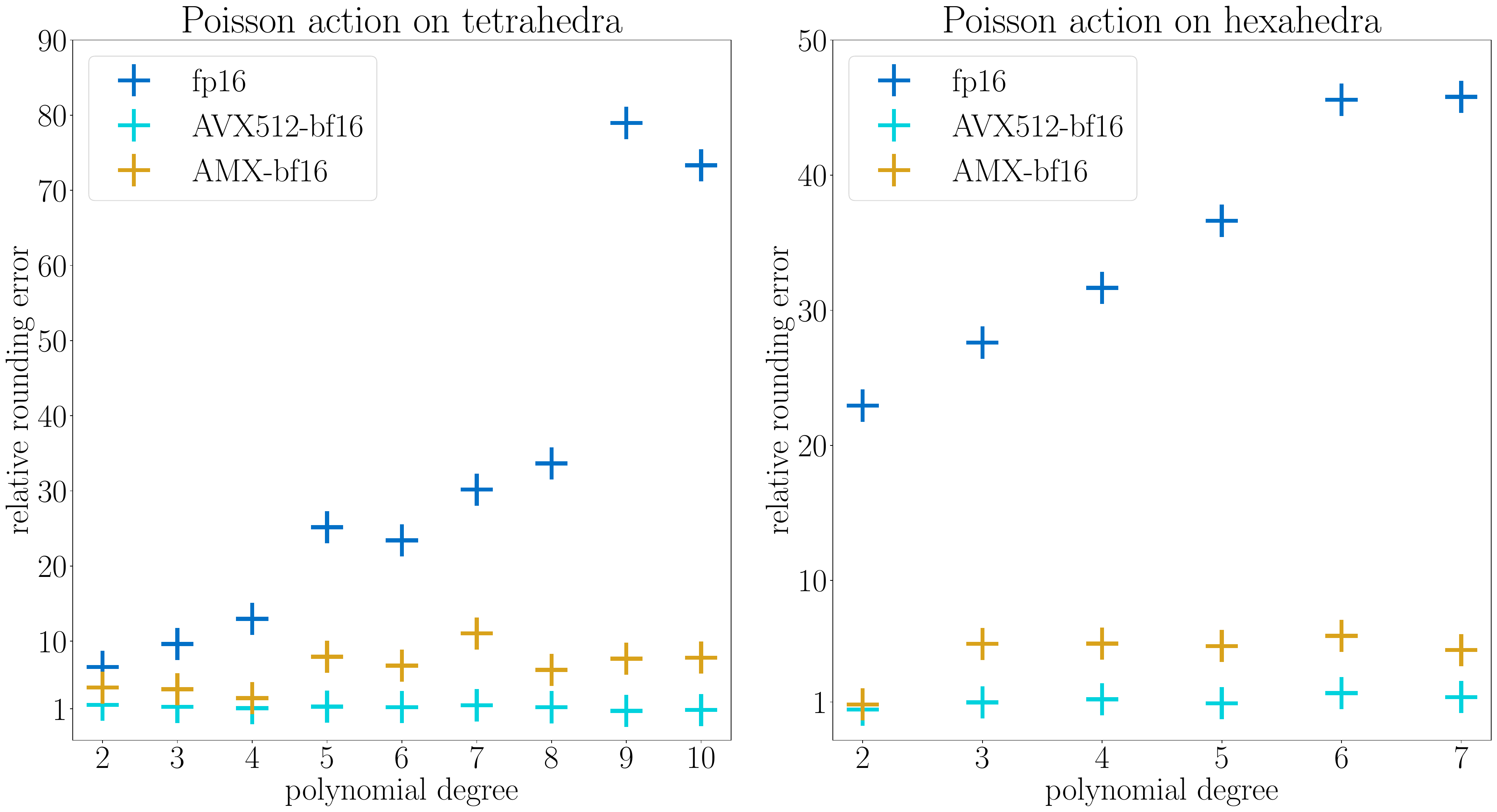}
    \end{subfigure}
    \caption{\textit{Relative rounding errors arising in the Poisson kernel
            evaluations via fp16 and fp32/bf16 mixed-precision plotted against
            the polynomial degree. The AVX512-bf16 and AMX-bf16 are almost
            overlapping for bilinear forms (top figures). The slight difference
            in the error constant of AVX512-bf16 and AMX-bf16 computations in
            the action kernels (bottom figures) is due to the different
            implementation: the AVX512-bf16 kernels are slightly more accurate
            since they evaluate FE functions using only single precision
            (cf.~Appendix \ref{appendix_sec:implementation_details}).}}
    \label{fig:figure_errors_Poisson_both}
\end{figure}

\subsection{Mixed-precision kernels: rounding errors.}

We now study the accuracy of the hardware-accelerated mixed-precision
kernels as described in Section \ref{subsec:MP_strategy}. Namely, we use single
precision for geometry, polynomial and FE function evaluations, and for the matrix-matrix
products and their accumulation ($u_p=u_g=u_q=2^{-24}$). For storage we use
instead bf16 ($u_s=2^{-8}$). Our aim is to validate the theory from Section
\ref{sec:rounding_error_analysis} as well as establishing that these
mixed-precision kernels yield indeed $O(u_s)$ errors where the hidden constant
is essentially independent from the conditioning of geometry computations and
polynomial and FE function evaluations as well as from the number of quadrature
nodes and the polynomial degree.

We run mixed-precision kernels on all meshes and for all forms using either
AVX512-bf16 or AMX-bf16 accelerators and we compare their error behaviour with a
kernel almost fully run in fp16 half precision (except for the geometry which we
have to run in fp32 to avoid underflow and much larger errors). We compute the
relative rounding errors as
\begin{align}
    \max_{k=1,\dots,\tilde{n}_K}\left(\dfrac{\lVert A^k - \hat{A}^k
    \rVert_{\max}}{u_s\lVert A^k \rVert_{\max}}\right),\quad
    \max_{k=1,\dots,\tilde{n}_K}\left(\dfrac{\lVert \bm{v}^k - \hat{\bm{v}}^k
    \rVert_{\infty}}{u_s\lVert \bm{v}^k \rVert_{\infty}}\right),
\end{align}
where $u_s=2^{-8}$ for the mixed-precision kernels using bf16 and
$u_s=2^{-11}$ for the fp16 kernels. Normalizing by the precision here is crucial
for a fair comparison as it sanitizes the results from the
choice of the half-precision format. Again, we would employ bf16 instead of
fp16, but pure bf16 operations are not supported in the CPU.
With this normalization the base-$10$ logarithm of the relative error is the
number of digits lost in the computation which helps with the interpretation of
the results.

Results are shown in Figure \ref{fig:figure_errors_mass_both} for the mass forms
and in Figure \ref{fig:figure_errors_Poisson_both} for the Poisson forms where
we plot the relative rounding errors versus the polynomial degree.  These
results match with what the theory predicts: while the fp16 errors
grow with the polynomial degree (and thus $n_q$ as well) and lose up to $2$
digits of accuracy (out of $3.5$), the relative rounding errors of the
mixed-precision kernels are essentially $O(1)$ and constant, which implies that
no digits have been lost. From Figure \ref{fig:geometry_errors} we also know that as
long as the condition number of the Jacobian is not larger than $10^5$ then the
errors arising from single-precision geometry computations are also negligible.
Therefore, these mixed-precision strategies are an effective way of computing
highly accurate kernels (given the precision) even at high polynomial degrees.

\begin{remark}
    In terms of absolute rounding errors the mixed-precision kernels are of the
    same order of the half-precision unit roundoff (roughly $2^{-8}\approx
    0.0039$) which is still orders of magnitude larger than the error that would
    be obtained with single- or double-precision-only kernels (cf.~Table
    \ref{tab:precision}). These considerations hold even after accounting for
    digit loss due to the larger error constants (see the fp16 results in
    Figures \ref{fig:figure_errors_mass_both} and
    \ref{fig:figure_errors_Poisson_both}).  Therefore, using the mixed-precision
    kernels is only suitable either when many digits of accuracy are not needed
    and/or after weighing this loss of accuracy against the speedups obtained
    (presented next).
\end{remark}

\subsection{Mixed-precision kernels: timings and speedup.}
\label{subsec:MP_kernels_speedups}

In this subsection we test the computational efficiency of the mixed-precision
kernels described in Section \ref{subsec:MP_strategy}. Our aim is to show that
these kernels are appreciably more efficient than their pure single or double
precision counterparts.

The kernels considered are the same as in the previous subsection, plus the
addition of single- and double-precision-only kernels. We measure the kernel
efficiency by the average per-cell CPU time taken by each kernel, excluding
tabulation and quadrature rule construction which are performed offline once and
for all for all cells. We then measure the speedup with respect to double
precision by taking the ratio between the timings of each kernel and the
corresponding double-precision timings.

Results are shown in Figure
\ref{fig:figure_timings_mass_both} for the mass forms and in Figure
\ref{fig:figure_timings_Poisson_both} for the Poisson forms. Firstly, we
note that the reduced-/mixed-precision implementation presented in this paper
is only beneficial for large enough polynomial degrees (larger than 4 or 5 for
tetrahedra and larger than 2 or 3 for hexahedra). This behaviour is
expected since our implementation is only really tailored to the higher
polynomial degrees for which the advantages of AVX512 and AMX accelerators are more
significant. Lower degrees imply a different cost and rounding error balance
between the kernel components and would require a separate, tailored
implementation which is outside the scope of this work. Secondly, we observe that
the speedups are clearly dependent on the precision and accelerator choices:
\begin{figure}[!htbp]
    \centering
    \begin{subfigure}[h]{\textwidth}
        \centering
        \includegraphics[width=\textwidth]{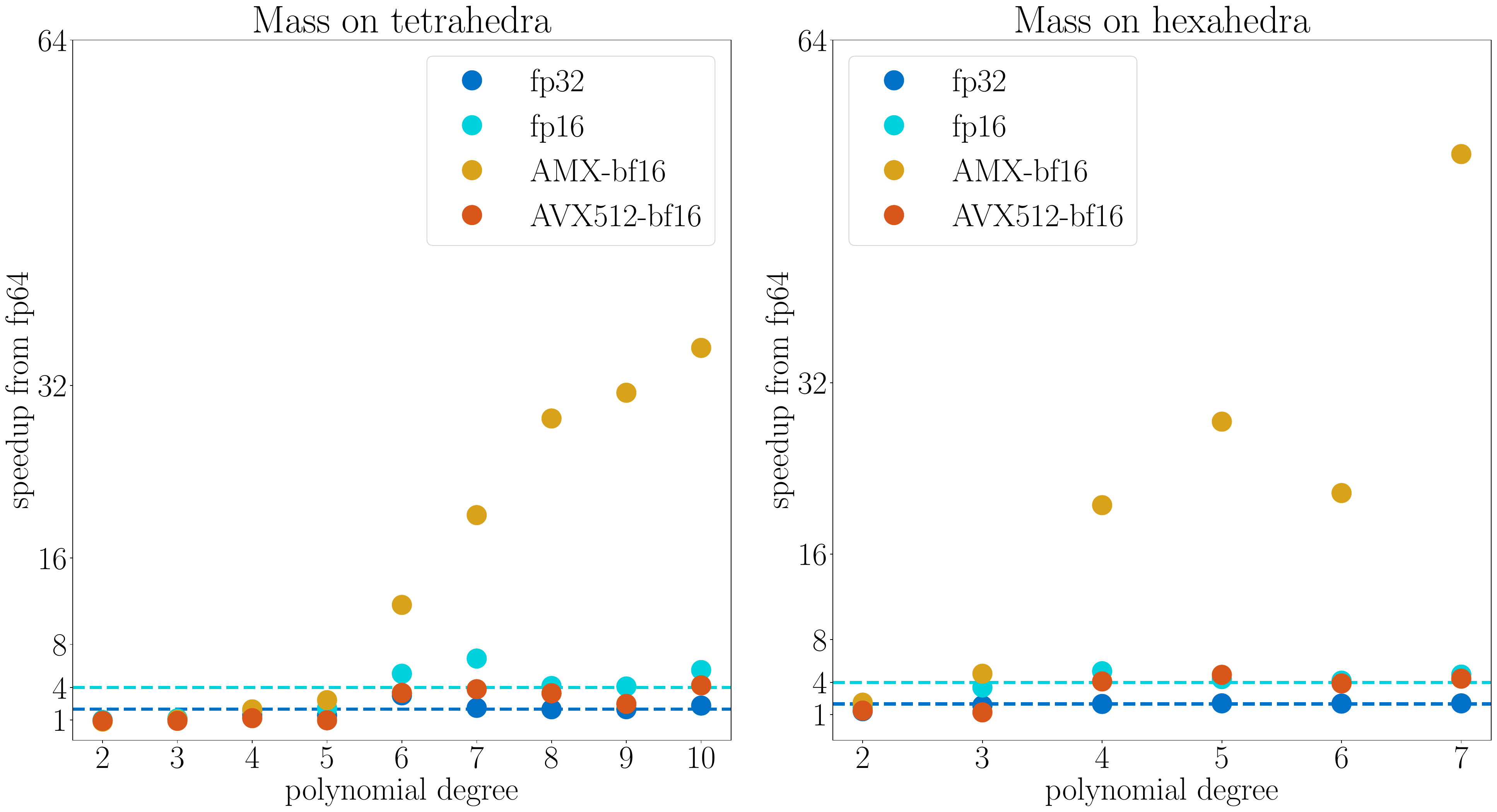}
    \end{subfigure}\\
    \vspace{6pt}
    \begin{subfigure}[h]{\textwidth}
        \centering
        \includegraphics[width=\textwidth]{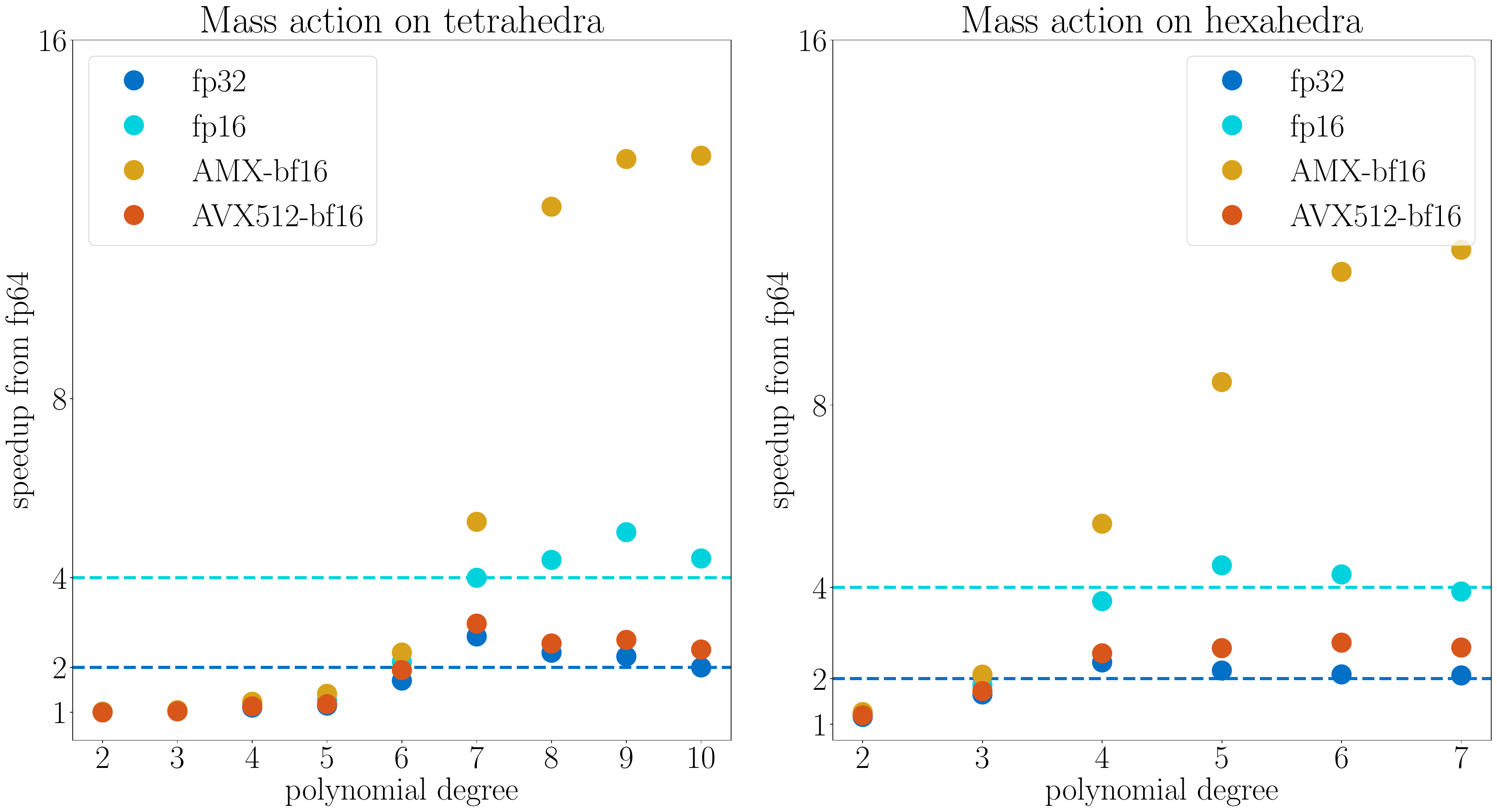}
    \end{subfigure}
    \caption{\textit{Computational speedup of the reduced- and mixed-precision
            mass kernels with respect to their double-precision equivalent
            versus the polynomial degree. The horizontal dashed lines correspond
            to the typical expected speedups obtainable with single and
            half-precision computations on CPUs ($2$ and $4$ times
            respectively). The largest speedups are obtained by the AMX-bf16
            kernels.}}
    \label{fig:figure_timings_mass_both}
    \vspace{-12pt}
\end{figure}
\begin{figure}[!htbp]
    \centering
    \begin{subfigure}[h]{\textwidth}
        \centering
        \includegraphics[width=\textwidth]{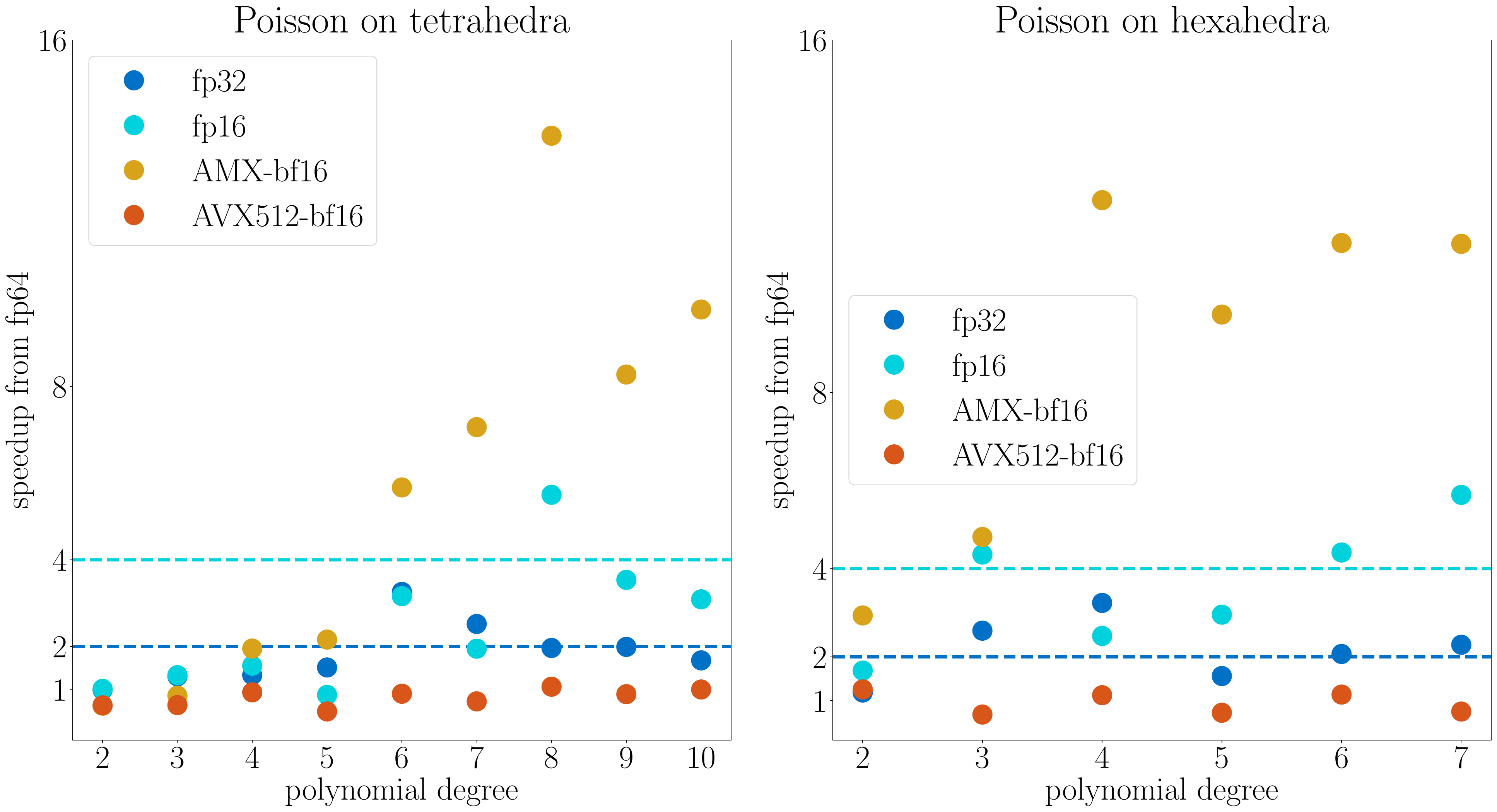}
    \end{subfigure}\\
    \vspace{6pt}
    \begin{subfigure}[h]{\textwidth}
        \centering
        \includegraphics[width=\textwidth]{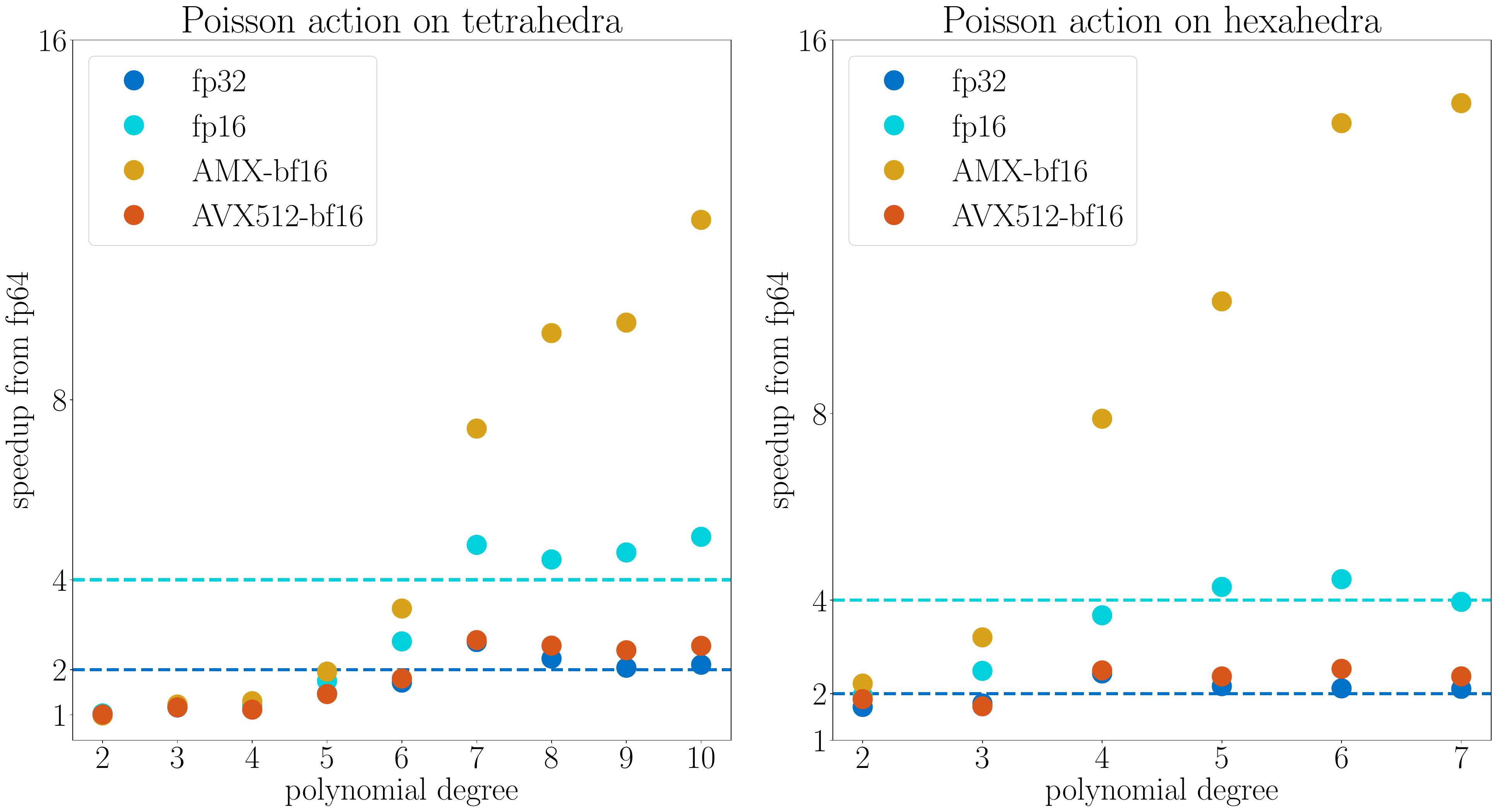}
    \end{subfigure}
    \caption{\textit{Computational speedup of the reduced- and mixed-precision
            Poisson kernels with respect to their double-precision equivalent
            versus the polynomial degree. The horizontal dashed lines correspond
            to the typical expected speedups obtainable with single and
            half-precision computations on CPUs ($2$ and $4$ times
            respectively). The largest speedups are obtained by the AMX-bf16
            kernels.}}
    \label{fig:figure_timings_Poisson_both}
    \vspace{-12pt}
\end{figure}
\begin{itemize}
    \item The (vectorized) single and half precision (dark and light blue
        respectively in the figures) are roughly up to twice and
        four times faster than double precision respectively. These are the
        typical improvements which we would expect to obtain by using
        lower-precision arithmetic (e.g., using half the digits results in twice
        the speed). We remark that these speedup factors do not
        occur in all reduced-precision applications so it is a positive result
        that they do occur for high-polynomial degree FE kernels.

    \item The vectorized mixed-precision AVX512-bf16 kernels (dark orange in the
        figures) yield varying results. For mass bilinear forms (Figure
        \ref{fig:figure_timings_mass_both}, top) they are as fast as
        half-precision with a factor-four speedup (while also being more
        accurate, recall Figure \ref{fig:figure_errors_mass_both}). This is also
        the maximum speedup that is theoretically achievable with AVX512-bf16
        instructions (recall Section \ref{subsec:background_on_accelerators}).
        For mass and Poisson actions (bottom of Figures
        \ref{fig:figure_timings_mass_both} and
        \ref{fig:figure_timings_Poisson_both}) they are only slightly faster
        than single precision, and for Poisson bilinear forms they are as slow
        as double precision. We believe this performance may be due to a
        lack of CPU and compiler features. In fact, mixed-precision and bf16
        instructions are limited if compared with typical AVX512 support and the
        compiler is unable to use these automatically. Even if we invoke these
        intrinsics by hand (which we do) this means that the full range of
        AVX512 kernel-wide compiler optimizations is unavailable here.
        Nevertheless, the four-times speedup obtained with mass bilinear forms
        is a promising result as it shows that these mixed-precision AVX512
        kernels can indeed be as fast as half precision while being more
        accurate.

    \item The AMX-bf16 kernels (ocra in the figures) outperform all other
        kernels and yield large speedups. For high-degree polynomials, these
        kernels are over $32$ times faster than double precision for mass
        bilinear forms on tetrahedra and up to $60$ times faster on hexahedra.
        For all other forms they are between $8$ and $16$ times faster than
        vectorized double precision kernels. These speedups are remarkable since
        they go well beyond the usual factor-four obtainable with half precision
        and, for the mass bilinear forms, approach the theoretical maximum
        speedup obtainable with AMX instructions (i.e., $64\times$, cf.~Section
        \ref{subsec:background_on_accelerators}). Furthermore, the AMX kernels
        are not only faster than fp16 computations, but also more accurate
        (recall Figures \ref{fig:figure_errors_mass_both} and
        \ref{fig:figure_errors_Poisson_both}). Additionally, we note that these
        kernels are not affected by the same slowdowns of the AVX512-bf16
        kernels, likely because AMX-bf16 accelerators have a much higher
        throughput which more than compensates for the limited CPU/compiler bf16
        support.
\end{itemize}

\begin{remark}
    \label{rem:no_numres_assembly}
    The results presented correspond as-is to the assembly of global tensors
    when discontinuous Lagrange basis functions are employed. We do not
    investigate numerically the assembly process of forms involving continuous
    Lagrange elements for the sake of brevity: continuous Lagrange elements only
    slightly affect timings and do not qualitatively affect rounding errors.
\end{remark}

\section{Conclusions}
\label{sec:conclusions}

Artificial intelligence applications have been driving the development of
computer chips that are optimized for parallel processing and reduced- and
mixed-precision computations.  While these hardware units are specifically
tailored to computer vision and machine learning workloads, the exploitation of
their parallel capabilities has also been shown to improve the efficiency of FE
computations.  However, the acceleration of the FE method via reduced- and
mixed-precision implementations has received less attention and the effect of
rounding errors in FE computations is underexplored in both theory and numerical
practice. Indeed, there is no comprehensive rounding error analysis of the FE
method that can guide reduced-precision implementations and ensure their
accuracy.

This paper is a first step in the direction of closing this gap in the
literature: We have derived the first rounding error analysis of FE kernels and
assembly which also accounts for mixed-precision and hardware accelerated
implementations. Furthermore, we have designed a mixed-precision implementation
strategy which exploits mixed-precision matrix units to obtain FE kernels and
assembly routines which are both performant and robust to half-precision
computations. Indeed, the resulting algorithms are provenly half-precision
accurate with an error constant that is independent from: the conditioning of FE
basis function evaluations, the ill-posedness of the cell, the polynomial
degree, and the number of quadrature nodes. Finally, we have presented the first
single-/half-precision FE kernel implementations exploiting Intel AMX units.
Numerical experimentation has shown that these kernels can be between 8 and 60
times faster than their double precision equivalents while being provenly more
accurate than their fully half precision counterparts.

Additional investigation is still required to derive a comprehensive rounding
error analysis and mixed-precision implementation of FE computations which also
includes timestepping, linear and nonlinear solvers, and preconditioning
strategies.  For instance, an open research question is whether it is possible
to design mixed-precision FE methods which benefit from the speedups resulting
from reduced-precision computations while retaining high-precision accuracy.

Further implementation work is also required. In particular, the design of
automatic architecture-specific code generation algorithms would be highly
beneficial as it would lighten the implementation burden on the user, accelerate
further research on the topic, and increase the adoption of reduced- and
mixed-precision FE methods by the scientific community.

\paragraph{Acknowledgements}
MC was supported by the European Union Horizon research and innovation
program under the Marie Sklodowska-Curie postdoctoral fellowship grant
101103593 (GEOLEARN). GNW was supported by Engineering and Physical
Sciences Research Council grants EP/S005072/1, EP/W00755X/1 and
EP/W026635/1.


\printbibliography

\newpage

\appendix

\section{Rounding error analysis of finite element function evaluations.}
\label{appendix_sec:polyval}

The rounding error analysis of multivariate
polynomial evaluations via Horner's method is known: In
\cite{pena2000multivariate} the following result is proven:

\begin{theorem}[\cite{pena2000multivariate}]
    \label{th:generic_poly_eval}
    Let $\phi(\x)$ with $\x\in K\subset\mathbb{R}^d$ be a
    multivariate polynomial in $d$ dimensions of total degree $m$. Let $u$ be
    the working precision. There exist polynomial evaluation algorithms and an
    algorithm-dependent constant $c_{\phi}\geq 1$ which, assuming $(c_{\phi}m +
    d)u < 1$, evaluate $\phi(\x)$ in finite precision yielding instead the
    quantity $\hat{\phi}(\x)$ satisfying
    \begin{align}
        |\phi(\x) - \hat{\phi}(\x)| \leq \gamma_{c_{\phi}m + d}\ \kappa(K,m,\phi)|\phi(\x)|.
    \end{align}
    Here $\kappa(K,m,\phi)$ is the condition number of the evaluation problem
    which is algorithm-dependent. For the multivariate Horner scheme from
    \cite{pena2000multivariate} and the monomial basis, $c_{\phi}=2$.
\end{theorem}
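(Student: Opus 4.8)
The plan is to lift the classical backward-error analysis of univariate Horner evaluation (Section~5 of \cite{higham2002accuracy}) to $d$ variables by induction on $d$. First I would fix the concrete scheme: write $\phi$ in the monomial basis, $\phi(\x)=\sum_{|\alpha|\le m}a_\alpha\x^\alpha$, view it as a univariate polynomial in $x_d$ of degree $\le m$ whose coefficients $c_k(x_1,\dots,x_{d-1})$ have total degree $\le m-k$, evaluate the outer polynomial in $x_d$ by Horner's rule, and evaluate each $c_k$ recursively by the same scheme in one fewer variable, the base case $d=1$ being ordinary univariate Horner. This is (a version of) the nested scheme of \cite{pena2000multivariate}; it interleaves multiplications by the $d$ coordinates and never forms an explicit monomial.

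Next I would apply the standard model \eqref{eq:std_floating_point_error_model} to every multiply and add and prove, by induction on $d$, the backward-error representation
\begin{align*}
    \hat\phi(\x)=\sum_{|\alpha|\le m}a_\alpha\bigl(1+\theta^{(\alpha)}\bigr)\,\x^\alpha,\qquad
    \bigl|\theta^{(\alpha)}\bigr|\le\gamma_{2|\alpha|+d}\le\gamma_{c_\phi m+d},\quad c_\phi=2.
\end{align*}
The subscript is obtained by tracing the dependency chain of $a_\alpha$: inside the outer Horner in $x_d$ it passes through at most $2\alpha_d$ rounded operations plus the one addition that introduces it, while by the inductive hypothesis the coefficient $c_{\alpha_d}$ in which it sits already carries a $\theta$ factor with subscript $2(\alpha_1+\cdots+\alpha_{d-1})+(d-1)$; composing the two with Lemma~\ref{lemma:higham_theta_gamma} and the arithmetic of Lemma~\ref{lemma:higham_manipulation_theta_gamma} gives subscript $2|\alpha|+d\le c_\phi m+d$. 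The hypothesis $(c_\phi m+d)u<1$ is exactly what makes Lemma~\ref{lemma:higham_theta_gamma} applicable to the longest such chain.

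From the backward representation the forward bound is immediate:
\begin{align*}
    |\phi(\x)-\hat\phi(\x)|
    &\le\sum_{|\alpha|\le m}|a_\alpha|\,\bigl|\theta^{(\alpha)}\bigr|\,|\x^\alpha|
    \le\gamma_{c_\phi m+d}\sum_{|\alpha|\le m}|a_\alpha|\,|\x^\alpha|,
\end{align*}
so setting $\kappa(K,m,\phi):=\max_{\x\in K}\bigl(\sum_{|\alpha|\le m}|a_\alpha|\,|\x^\alpha|\bigr)\big/|\phi(\x)|$ yields the claimed inequality for every $\x\in K$; a genuinely pointwise condition number is obtained simply by dropping the maximum.

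I expect the main obstacle to be the inductive bookkeeping rather than any deep idea. Unlike the univariate case, distinct monomials traverse chains of differing lengths through the nested scheme, so one must exhibit a \emph{single} subscript $c_\phi m+d$ dominating all of them at once, and one must check that shared subexpressions --- the partial Horner sums feeding many monomials --- do not break the additive forward bound (they do not, because the $\theta^{(\alpha)}$ are only required to be \emph{bounded}, not independent). Pinning the constant down to the sharp value $c_\phi=2$, rather than a harmless $O(1)$, is precisely where the detailed operation count of \cite{pena2000multivariate} is needed; other bases or scheme variants yield a different $c_\phi$, which is why the theorem only asserts the existence of such a constant in general.
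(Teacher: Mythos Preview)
The paper does not prove this statement: Theorem~\ref{th:generic_poly_eval} is quoted from \cite{pena2000multivariate} and used as a black box, with no argument supplied beyond the citation. There is therefore nothing in the paper to compare your proposal against.

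That said, your sketch is a faithful outline of how the result in \cite{pena2000multivariate} is obtained: the nested Horner recursion on the number of variables, the backward-error representation $\hat\phi(\x)=\sum_\alpha a_\alpha(1+\theta^{(\alpha)})\x^\alpha$, and the operation count giving a subscript $2|\alpha|+d\le 2m+d$ are exactly the ingredients of that reference. Your identification of the condition number with $\sum_\alpha|a_\alpha||\x^\alpha|/|\phi(\x)|$ is also the standard one for the monomial basis. The only places where care is needed are the off-by-one details in the induction (whether the constant term picks up one addition or none at each level, which affects the ``$+d$'' versus ``$+d-1$''), and the observation you already flag that shared Horner partial sums do not spoil the per-monomial bound because the $\theta^{(\alpha)}$ need only be bounded, not distinct. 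None of this is a gap; it is precisely the bookkeeping you anticipate.
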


The above result holds for all polynomials and can thus be applied to any
polynomial FE basis. However, for polynomials that are expressible
as a product of monomials, such as Lagrange polynomials on simplicies
(cf.~\cite{nicolaides1972class}) and quadrilaterals and hexahedra, the
bound can be tightened:

\begin{lemma}
    \label{lemma:lagrange_poly_eval}
    Let $\x\in K\subset\mathbb{R}^d$ and let $\phi(\x)$
    be a multivariate polynomial of total degree $m$ which can be written in the
    form,
    \begin{align}
        \label{eq:polyval_product_repr}
        \phi(\x) = w_{\phi}\prod_{i=1}^m \ell_i(\x),
    \end{align}
    where $w_\phi\in\mathbb{R}$ is known or pre-computed in exact arithmetic,
    and $\{\ell_i(\x)\}_{i=1}^m$ are first-degree monomials. Assume that the
    evaluation of the monomials in finite precision yields instead
    $\hat{\ell}_i(\x)=\ell_i(\x)(1 + \theta_r)$ with $|\theta_r|\leq \gamma_r$.
    Then, provided that $u(m(r+1)+1)<1$, evaluating $\phi(\x)$ as the product
    above yields instead $\hat{\phi}(\x)$ satisfying
    \begin{align}
        |\phi(\x) - \hat{\phi}(\x)| \leq \gamma_{m(r+1)+1}|\phi(\x)|.
    \end{align}
\end{lemma}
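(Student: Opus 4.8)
The plan is to treat the evaluation of $\phi(\x)$ as the accumulation, by $m$ successive floating-point multiplications, of the $m+1$ quantities $w_\phi$ and $\hat\ell_1(\x),\dots,\hat\ell_m(\x)$, and then to collect every resulting $(1+\delta)$ factor into a single $\theta$-term using Lemmas~\ref{lemma:higham_theta_gamma} and~\ref{lemma:higham_manipulation_theta_gamma}. This is the same bookkeeping used for products in Chapter~3 of \cite{higham2002accuracy}, adapted so that the monomial errors $\theta_r$ are folded in rather than bounded separately.

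First I would account for the constant: although $w_\phi$ is assumed known exactly, storing it in the working format incurs one rounding, so the value actually multiplied is $\hat w_\phi = w_\phi(1+\delta_0)$ with $|\delta_0|\le u$ (if $w_\phi$ happens to be exactly representable this factor is simply absent, which only improves the bound). By hypothesis each monomial evaluation gives $\hat\ell_i(\x)=\ell_i(\x)(1+\theta_r^{(i)})$ with $|\theta_r^{(i)}|\le\gamma_r$. Forming the product $\hat w_\phi\,\hat\ell_1(\x)\cdots\hat\ell_m(\x)$ by repeated multiplication --- in any order --- uses exactly $m$ multiplications, the $j$-th contributing a factor $(1+\delta_j)$ with $|\delta_j|<u$ by the standard model \eqref{eq:std_floating_point_error_model}.

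Collecting these factors, the computed value is
\begin{align*}
    \hat\phi(\x) = w_\phi\Bigl(\prod_{i=1}^m\ell_i(\x)\Bigr)(1+\delta_0)\prod_{j=1}^m(1+\delta_j)\prod_{i=1}^m(1+\theta_r^{(i)}) = \phi(\x)\,(1+\Theta),
\end{align*}
where $\Theta$ is a product of $(1+\delta_0)$, the $m$ factors $(1+\delta_j)$, and the $m$ factors $(1+\theta_r^{(i)})$. By Lemma~\ref{lemma:higham_theta_gamma} each $(1+\theta_r^{(i)})$ is itself a product of at most $r$ terms of the form $(1\pm\delta)^{\pm1}$, so $1+\Theta$ is a product of at most $1+m+mr=m(r+1)+1$ such terms. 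Under the stated assumption $u(m(r+1)+1)<1$, Lemma~\ref{lemma:higham_theta_gamma} then gives $1+\Theta=1+\theta_{m(r+1)+1}$ with $|\theta_{m(r+1)+1}|\le\gamma_{m(r+1)+1}$, whence
\begin{align*}
    |\phi(\x)-\hat\phi(\x)| = |\phi(\x)|\,|\theta_{m(r+1)+1}| \le \gamma_{m(r+1)+1}\,|\phi(\x)|,
\end{align*}
which is the claim.

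\textbf{Main obstacle.} There is no real difficulty here --- the argument is elementary bookkeeping --- so the only points needing care are (i) counting the rounding operations correctly, in particular charging one rounding for $w_\phi$, which is precisely the source of the ``$+1$'' in the subscript; and (ii) absorbing the $m$ monomial errors $\theta_r^{(i)}$ into the same $\theta$-term via Lemma~\ref{lemma:higham_theta_gamma} instead of bounding them additively, which is what keeps the exponent linear in $m(r+1)$ rather than producing cross terms.
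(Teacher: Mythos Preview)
Your proposal is correct and follows essentially the same route as the paper: collect the $(1+\delta)$ factors from the $m$ multiplications, the storage of $w_\phi$, and the $m$ monomial errors $(1+\theta_r)$, then combine them into a single $(1+\theta_{m(r+1)+1})$ via Lemma~\ref{lemma:higham_theta_gamma} (equivalently, the rule $(1+\theta_k)(1+\theta_j)=1+\theta_{k+j}$ from Lemma~\ref{lemma:higham_manipulation_theta_gamma}). Your explicit attribution of the ``$+1$'' to the rounding of $w_\phi$ is a nice clarification that the paper leaves implicit.
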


\begin{proof}
    Under the assumptions of the lemma it holds that
    \begin{align}
        \label{eq:_lemma_lagrange_poly_eval_1}
        \hat{\phi}(\x) = \prod_{i=1}^{m+1}(1+\delta_i)w_{\phi}\prod_{i=1}^m
        \hat{\ell}_i(\x) =
        (1+\theta_{m+1})(1+\theta_r)^m\left(w_{\phi}\prod_{i=1}^m
        \ell_i(\x)\right) =  (1+\theta_{m(r+1)+1})\phi(\x).
    \end{align}
    Therefore
    \begin{align}
        |\phi(\x) - \hat{\phi}(\x)| \leq \gamma_{m(r + 1)+1}|\phi(\x)|,
    \end{align}
    which is the thesis.
\end{proof}
Note that the result in Lemma \ref{lemma:lagrange_poly_eval} is stronger than
Theorem \ref{th:generic_poly_eval} since there is no condition number appearing
in the error bound, i.e., the evaluation problem in this case has perfect
conditioning.

What about derivatives? Since derivatives of polynomials are polynomials,
Theorem \ref{th:generic_poly_eval} still applies, although it requires
expressing the derivatives in terms of the multivariate monomial basis. We adopt
a different strategy: under the assumptions of Lemma
\ref{lemma:lagrange_poly_eval}, we can write the derivative of $\phi(\x)$ as
follows: for $s\in\{1,\dots,d\}$,
\begin{align}
    \label{eq:polyval_derivatives}
    \partial_{s} \phi(\x) = w_{\phi}\sum_{j=1}^m(\partial_{s}\ell_j)\prod_{i\neq
    j}^m\ell_i(\x).
\end{align}
Here $\partial_s$ denotes the partial derivative with respect to $\x_s$.
Note that since $\ell_i$ are first-degree monomials, their derivatives are
constants and can only attain the values $\pm 1$.

\begin{lemma}
    \label{lemma:polyval_derivatives}
    Let the assumptions of Lemma \ref{lemma:lagrange_poly_eval} hold. The
    evaluation of equation \eqref{eq:polyval_derivatives} yields
    $\widehat{\partial_{x_s} \phi(\x)}$ satisfying:
    \begin{align}
        \label{eq:polyval_derivatives_1}
        |\partial_{s}\phi(\x) - \widehat{\partial_{s}\phi}(\x)| &\leq \gamma_{(m-1)(r + 1)}\sum_{j=1}^m\left|\frac{\phi(\x)}{\ell_j(\x)}\right|,\\[0.25em]
        \label{eq:polyval_derivatives_2}
            \max_{\x\in K}|\partial_{s}\phi(\x) -
        \widehat{\partial_{s}\phi}(\x)| &\leq \gamma_{(m-1)(r +
            1)}\kappa(\partial_{s}\phi)\max_{\x\in
            K}|\partial_{s}\phi(\x)|,\\[0.5em]
            \text{where}\quad \kappa(\partial_{s}\phi) &= \dfrac{\max\limits_{\x\in
    K}\sum\limits_{j=1}^m\left|\phi(\x)/\ell_j(\x)\right|}{\max\limits_{\x\in
            K}|\partial_{s}\phi(\x)|}\geq 1.
    \end{align}
\end{lemma}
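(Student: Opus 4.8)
The plan is to analyse the right-hand side of \eqref{eq:polyval_derivatives} term by term, then account for the final summation and the scaling by $w_\phi$, and finally convert the resulting \emph{absolute} error bound into the max-norm bound \eqref{eq:polyval_derivatives_2}. Set $Q_j(\x) := (\partial_{s}\ell_j)\prod_{i\neq j}\ell_i(\x)$, so that $\partial_{s}\phi = w_\phi\sum_{j=1}^m Q_j$. Since each $\ell_i$ is a first-degree monomial, every $\partial_{s}\ell_j$ is a constant of magnitude at most $1$ (in fact $0$ or $\pm 1$, as observed after \eqref{eq:polyval_derivatives}); it is therefore exactly representable, multiplication by it is performed without rounding, and the terms with $\partial_{s}\ell_j=0$ drop out. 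Moreover the quantity $\phi(\x)/\ell_j(\x)$ appearing in the statement should be read as the degree-$(m-1)$ polynomial $w_\phi\prod_{i\neq j}\ell_i(\x)$, which is perfectly well defined even where $\ell_j$ vanishes, and $|Q_j(\x)|=|\phi(\x)/\ell_j(\x)|/|w_\phi|$ for the surviving indices.

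The core of the argument is the estimate for a single term. Evaluating $\prod_{i\neq j}\hat{\ell}_i$ is a product of $m-1$ quantities, each of the form $\hat{\ell}_i=\ell_i(1+\theta_r)$, so the exact same reasoning as in the proof of Lemma \ref{lemma:lagrange_poly_eval} — now with $m$ replaced by $m-1$ and with the factor $\partial_{s}\ell_j=\pm 1$ contributing no error — yields $\widehat{Q}_j = Q_j(1+\theta_{(m-1)(r+1)})$, i.e.\ $|\widehat{Q}_j-Q_j|\leq\gamma_{(m-1)(r+1)}|Q_j|$. (One can in fact form all $m$ sub-products with only $O(m)$ multiplications in total by using prefix and suffix products of the $\hat{\ell}_i$, but this organisational choice does not affect the per-term bound.) Accumulating the $m$ computed terms and multiplying the result by $w_\phi$ introduces only $O(m)$ further roundoffs, which by Lemma \ref{lemma:higham_manipulation_theta_gamma} combine with the per-term $\theta$'s into a single quantity of the stated order $(m-1)(r+1)$; the essential structural point is that, because the $Q_j$ can have mixed signs and cancel, the error of the accumulated sum can only be controlled by $\sum_j|Q_j|$ and not by $|\sum_jQ_j|$. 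Putting these together gives $|\widehat{\partial_{s}\phi}(\x)-\partial_{s}\phi(\x)|\leq\gamma_{(m-1)(r+1)}|w_\phi|\sum_{j=1}^m|Q_j(\x)| = \gamma_{(m-1)(r+1)}\sum_{j=1}^m|\phi(\x)/\ell_j(\x)|$, which is \eqref{eq:polyval_derivatives_1}; throughout one uses the smallness of $u$ to keep the relevant $\gamma$ terms well defined, exactly as in Lemma \ref{lemma:lagrange_poly_eval}.

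For \eqref{eq:polyval_derivatives_2} I would bound the right-hand side of \eqref{eq:polyval_derivatives_1} by its maximum over $K$, factor out $\max_{\x\in K}|\partial_{s}\phi(\x)|$, and recognise the remaining ratio as $\kappa(\partial_{s}\phi)$; taking the maximum over $\x$ on the left then gives the claim. The bound $\kappa(\partial_{s}\phi)\geq 1$ follows from the pointwise inequality $|\partial_{s}\phi(\x)| = \bigl|w_\phi\sum_j(\partial_{s}\ell_j)\prod_{i\neq j}\ell_i(\x)\bigr| \leq \sum_j|w_\phi\prod_{i\neq j}\ell_i(\x)| = \sum_j|\phi(\x)/\ell_j(\x)|$, using $|\partial_{s}\ell_j|\leq 1$, after taking the maximum over $\x\in K$ of both sides. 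The one genuinely delicate step is the middle one: carefully tracking the roundoffs from the $\hat{\ell}_i$, from the multiplications that build each sub-product, from the $m-1$ additions in the accumulation, and from the scaling by $w_\phi$, collapsing them with Lemma \ref{lemma:higham_manipulation_theta_gamma} into the single factor $\gamma_{(m-1)(r+1)}$, while respecting that the sign cancellation in $\sum_jQ_j$ forces the bound to be expressed through $\sum_j|Q_j|$ (equivalently $|w_\phi|^{-1}\sum_j|\phi/\ell_j|$) rather than through $|\partial_{s}\phi|$ itself. The remaining manipulations are routine.
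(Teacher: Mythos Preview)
Your proposal is correct and follows essentially the same route as the paper: apply the product-form estimate of Lemma~\ref{lemma:lagrange_poly_eval} with $m-1$ factors to each term $Q_j$ (using that $\partial_s\ell_j\in\{0,\pm1\}$ is exact), absorb the $O(m)$ summation and $w_\phi$ roundoffs into the same $\gamma_{(m-1)(r+1)}$, and then pass to the max-norm bound and prove $\kappa(\partial_s\phi)\ge 1$ via the triangle inequality on \eqref{eq:polyval_derivatives}. The only differences are cosmetic---your explicit emphasis that sign cancellation forces the bound through $\sum_j|Q_j|$ rather than $|\partial_s\phi|$, and a one-off discrepancy in the per-term index ($(m-1)(r+1)$ versus the paper's $(m-1)(r+1)-1$), neither of which affects the argument.
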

\begin{proof}
    Since $\partial_{s}\ell_j = \pm 1$, these terms are exactly representable
    and their product is always exact. Therefore, each term in the sum in
    \eqref{eq:polyval_derivatives} is of the same form as
    \eqref{eq:polyval_product_repr} with one less term in the product and
    without the final multiplication by $w_\phi$. It thus
    holds that
    \begin{align}
        \yhwidehat{(\partial_{s}\ell_j)\prod_{i\neq
        j}^m\ell_i(\x)} =
        (1+\theta_{(m-1)(r+1)-1})(\partial_{s}\ell_j)\prod_{i\neq
        j}^m\ell_i(\x).
    \end{align}
    Here we have used the same argument as for equation
    \eqref{eq:_lemma_lagrange_poly_eval_1}. Hence, we have that
    \begin{align}
        \widehat{\partial_{s}\phi}(\x) 
        = \left(1 +
        \theta_{(m-1)(r+1)}\right)w_{\phi}\sum_{j=1}^m(\partial_{s}\ell_j)\prod_{i\neq
            j}^m\ell_i(\x)(1+\theta_{m-1}^j),
    \end{align}
    where we write $(1+\theta_{m-1}^j)$ to indicate that the rounding errors in
    the sum are $j$-dependent. From the above and using $|\partial_{s}\ell_j|
    = 1$ we obtain the bound
    \begin{align}
        |\partial_{s}\phi(\x) - \widehat{\partial_{s}\phi}(\x)| \leq
    \gamma_{(m-1)(r + 1)}\sum_{j=1}^m\left|w_{\phi}\prod_{i\neq
        j}^m\ell_i(\x)\right|=\gamma_{(m-1)(r +
        1)}\sum_{j=1}^m\left|\frac{\phi(\x)}{\ell_j(\x)}\right|,
    \end{align}
    which is equation \eqref{eq:polyval_derivatives_1}. Taking the maximum over
    all $\x\in K$ (recall that $K$ is compact by assumption) and multiplying
    and dividing by $\max_{\x\in K}|\partial_{s}\phi(\x)|$ yields
    equation \eqref{eq:polyval_derivatives_2}. The bound
    $\kappa(\partial_{s}\phi)\geq 1$ is proven by applying the triangle
    inequality to equation \eqref{eq:polyval_derivatives} and using again
    $|\partial_{s}\ell_j| = 1$. This final step concludes the proof.
\end{proof}

It is straightforward to extend the error bounds for basis function evaluations
to bounds for finite element function evaluations:

\begin{lemma}
    \label{lemma:FEM_function_eval}
    Let $\K$ be a reference cell and let
    $V_{\K}=\textnormal{span}\left(\bm{\Phi}\right)$ where
    $\bm{\Phi}(\X)=[\phi_i(\X)]_{i=1}^{n_\phi}$ and its entries are polynomials
    which satisfy the assumptions of Lemmas \ref{lemma:lagrange_poly_eval} and
    \ref{lemma:polyval_derivatives}. For $\bm{z} = [z_i]_{i=1}^{n_\phi}$, let
    $z_h=\bm{z}^T\bm{\Phi}(\X)\in V_{\K}$ and assume that $u(m(r+2) + n_{\phi})<1$. Evaluating
    $z_h$ and $\partial_{s} z_h$ in precision $u$ yields $\hat{z}_h$
    and $\widehat{\partial_{s} z_h}$ respectively satisfying
    \begin{align}
        \max_{\X\in \K}|z_h - \hat{z}_h| &\leq \gamma_{n_{\phi} + m(r+1)+1}\kappa(V_{\K})\lVert \bm{z} \rVert_{\infty},\\[0.25em]
        \max_{\X\in \K}|\partial_{s}z_h - \widehat{\partial_{s}z_h}| &\leq
        \gamma_{n_{\phi} + (m-1)(r+1)}\kappa(\partial_{s}\bm{\Phi})\kappa(\partial_{s}V_{\K})\lVert \bm{z} \rVert_{\infty},\\[0.5em]
        \quad\textnormal{where}\quad
        \kappa(V_{\K}) :=
        \max_{\X\in \K}\
        \lVert\bm{\Phi}(\X)\rVert_1,\quad\kappa(\partial_{s}V_{\K}) :=& \max_{\X\in
        \K}\
        \lVert\partial_{s}\bm{\Phi}(\X)\rVert_1,\quad\textnormal{and}\quad
        \kappa(\partial_{s}\bm{\Phi}) :=
        \max_{i=1,\dots,n_{\phi}}\kappa(\partial_{s}\phi_i).
    \end{align}
\end{lemma}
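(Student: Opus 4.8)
The plan is to regard $z_h(\X)=\bm{z}^T\bm{\Phi}(\X)$ and $\partial_s z_h(\X)=\bm{z}^T\partial_s\bm{\Phi}(\X)$ as inner products and to compose two independent sources of rounding error: (i) the error made when each scalar (derivative of a) basis function is evaluated in precision $u$, quantified by Lemma~\ref{lemma:lagrange_poly_eval} for the values and by Lemma~\ref{lemma:polyval_derivatives} for the derivatives; and (ii) the error made when forming the dot product of $\bm{z}$ against the already-computed vector of (derivative) values, which is the classical inner-product backward bound~\eqref{eq:inner_prods}. I would first check that the hypothesis $u(m(r+2)+n_\phi)<1$ dominates all the smallness conditions needed below (those of Lemma~\ref{lemma:lagrange_poly_eval}, and the requirement that the final $\gamma$-factors be well defined), using $m(r+1)+1\le m(r+2)$.

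For the value bound I would apply Lemma~\ref{lemma:lagrange_poly_eval} to get $\hat\phi_i(\X)=(1+\theta^i_{m(r+1)+1})\phi_i(\X)$, then substitute the vector $\hat{\bm{\Phi}}(\X)$ into~\eqref{eq:inner_prods} to obtain $\widehat{z_h}(\X)=(\bm{z}+\Delta\bm{z})^T\hat{\bm{\Phi}}(\X)$ with $|\Delta\bm{z}|\le\gamma_{n_\phi}|\bm{z}|$. Collapsing the two relative perturbations per entry with Lemma~\ref{lemma:higham_manipulation_theta_gamma} gives $\widehat{z_h}(\X)=\sum_i z_i(1+\theta^i_{n_\phi+m(r+1)+1})\phi_i(\X)$, hence
\begin{align*}
    |\widehat{z_h}(\X)-z_h(\X)|\le\gamma_{n_\phi+m(r+1)+1}\sum_i|z_i|\,|\phi_i(\X)|\le\gamma_{n_\phi+m(r+1)+1}\,\lVert\bm{z}\rVert_\infty\,\lVert\bm{\Phi}(\X)\rVert_1,
\end{align*}
and taking the maximum over $\X\in\K$ and recalling $\kappa(V_\K)=\max_\X\lVert\bm{\Phi}(\X)\rVert_1$ gives the first claim.

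For the derivative bound the same skeleton applies, but the per-function errors from Lemma~\ref{lemma:polyval_derivatives} are not purely relative, so I would carry them additively. Writing $\widehat{\partial_s z_h}(\X)=(\bm{z}+\Delta\bm{z})^T\widehat{\partial_s\bm{\Phi}}(\X)$ via~\eqref{eq:inner_prods} and expanding, to leading order
\begin{align*}
    |\widehat{\partial_s z_h}(\X)-\partial_s z_h(\X)|\lesssim\lVert\bm{z}\rVert_\infty\bigl\lVert\widehat{\partial_s\bm{\Phi}}(\X)-\partial_s\bm{\Phi}(\X)\bigr\rVert_1+\gamma_{n_\phi}\lVert\bm{z}\rVert_\infty\lVert\partial_s\bm{\Phi}(\X)\rVert_1.
\end{align*}
The second summand is at most $\gamma_{n_\phi}\lVert\bm{z}\rVert_\infty\kappa(\partial_s V_\K)$ after maximizing over $\X$, and since $\kappa(\partial_s\bm{\Phi})\ge1$ it is absorbed into the target bound. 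For the first summand I would estimate $\bigl\lVert\widehat{\partial_s\bm{\Phi}}(\X)-\partial_s\bm{\Phi}(\X)\bigr\rVert_1$ entrywise through Lemma~\ref{lemma:polyval_derivatives}, pull $\kappa(\partial_s\bm{\Phi})=\max_i\kappa(\partial_s\phi_i)$ out of the sum over $i$, identify (up to constants) the leftover sum with $\kappa(\partial_s V_\K)$, and merge the $\gamma$-indices with Lemma~\ref{lemma:higham_manipulation_theta_gamma} to reach $\gamma_{n_\phi+(m-1)(r+1)}\kappa(\partial_s\bm{\Phi})\kappa(\partial_s V_\K)\lVert\bm{z}\rVert_\infty$.

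The delicate step — and the only real obstacle — is this last recombination: $\kappa(\partial_s\phi_i)$ is defined in Lemma~\ref{lemma:polyval_derivatives} as a ratio of \emph{maxima over $\K$} (of $\sum_j|\phi_i/\ell_j|$ over $\max|\partial_s\phi_i|$), whereas $\kappa(\partial_s V_\K)=\max_\X\lVert\partial_s\bm{\Phi}(\X)\rVert_1$ is a maximum of a sum, and the maximizing points $\X$ for different indices $i$ need not coincide. One therefore has to be careful about the order in which $\sum_i$ and $\max_\X$ are taken when passing from $\sum_i\max_\X|\partial_s\phi_i(\X)|$ to $\kappa(\partial_s V_\K)$; any mismatch is harmless at the level of precision of the statement and can be folded into the constant, but it is the one place where the argument is more than routine $\theta$/$\gamma$ algebra.
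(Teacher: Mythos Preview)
Your approach is essentially the same as the paper's: view $z_h$ and $\partial_s z_h$ as inner products, apply the backward bound~\eqref{eq:inner_prods}, compose with the basis-function evaluation errors from Lemmas~\ref{lemma:lagrange_poly_eval} and~\ref{lemma:polyval_derivatives}, and merge the $\gamma$-indices via Lemma~\ref{lemma:higham_manipulation_theta_gamma}. The paper's proof is in fact terser than yours---for the derivative bound it simply says ``similarly, combining the error bound~\eqref{eq:inner_prods} for inner products with Lemma~\ref{lemma:polyval_derivatives} yields the second bound''---and does not spell out the $\max/\sum$ interchange you flag; so your identification of that step as the only non-routine point is accurate, and the paper does not resolve it any more carefully than you do.
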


\begin{proof}
    Since $z_h=\bm{z}^T\bm{\Phi}$ and
    $\partial_{s}z_h=\bm{z}^T(\partial_{s}\bm{\Phi})$, we can apply the
    error bound for inner products (cf.~equation \eqref{eq:inner_prods}) and
    Lemma \ref{lemma:lagrange_poly_eval} to obtain the first bound:
    \begin{align}
        \max_{\X\in \K}|z_h - \hat{z}_h| \leq (\gamma_{n_{\phi}} + \gamma_{m(r+1)+1} +
    \gamma_{n_{\phi}}\gamma_{m(r+1)+1})\max_{\X\in \K}|\bm{z}|^T|\bm{\Phi}| \leq
    \gamma_{n_{\phi} +
        m(r+1)+1}\lVert \bm{z} \rVert_{\infty} \max_{x\in \K}\lVert \bm{\Phi} \rVert_1.
    \end{align}
    Here we have used the relation $\gamma_a + \gamma_{b} + \gamma_a\gamma_{b}
    \leq \gamma_{a+b}$, cf.~Lemma \ref{lemma:higham_manipulation_theta_gamma}.
    Similarly, combining the error bound \eqref{eq:inner_prods} for inner
    products with Lemma \ref{lemma:polyval_derivatives} yields the second bound
    in the lemma and concludes the proof:
    \begin{align}
        \max_{\X\in \K}|\partial_{s}z_h - \widehat{\partial_{s}z_h}| &\leq
        \gamma_{n_{\phi} +
        (m-1)(r+1)} \kappa(\partial_{s}\bm{\Phi})\kappa(\partial_{s}V_{\K})\lVert \bm{z}
        \rVert_{\infty}.
    \end{align}
\end{proof}

We now specialize the above results to Lagrange elements over reference
simplices or boxed cells:
\begin{lemma}
    Let $\K$ be a $d$-dimensional reference simplex or boxed cell and let
    $V_{\K}=\textnormal{span}\left(\bm{\Phi}\right)$, where the entries of
    $\bm{\Phi}$ form the degree-$p$ nodal Lagrange reference basis over $\K$.
    Then, if $u$ is sufficiently small, the evaluation of $\phi_i(\X)$ and its
    derivatives in precision $u$ satisfy, for all $i$ and for all $\X\in \K$,
    the bounds
    \begin{align}
        |\phi_i(\X) - \hat{\phi}_i(\X)| &\lesssim dpu|\phi_i(\X)|,\\[0.25em]
        \max_{\X\in \K}|\partial_{s}\phi_i(\X) -
        \widehat{\partial_{s}\phi_i}(\X)| &\lesssim
        dpu\kappa(\partial_{s}\phi_i)\max_{\X\in
            \K}|\partial_{s}\phi_i(\X)|,
    \end{align}
    Furthermore, the evaluation of a function
    $z_h(\X)=\bm{z}^T\bm{\Phi}(\X)$ and its derivatives satisfy the
    bounds
    \begin{align}
        \max_{\X\in \K}|z_h - \hat{z}_h| &\lesssim p^du\kappa(V_{\K})\lVert
        \bm{z} \rVert_{\infty},\\[0.25em]
        \max_{\X\in \K}|\partial_{s}z_h - \widehat{\partial_{s}z_h}| &\lesssim
        p^du \kappa(\partial_{s}\bm{\Phi})\kappa(\partial_{s}V_{\K})\lVert
        \bm{z} \rVert_{\infty}.
    \end{align}
\end{lemma}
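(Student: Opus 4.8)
The plan is to obtain all four inequalities by substituting the specific structural data of degree-$p$ Lagrange bases on simplices and boxed cells into the three general results of this appendix, Lemmas~\ref{lemma:lagrange_poly_eval}, \ref{lemma:polyval_derivatives}, and \ref{lemma:FEM_function_eval}. Those results depend on only two parameters: the number $m$ of first-degree monomial factors in the product representation~\eqref{eq:polyval_product_repr} of a basis function, and the parameter $r$ controlling the relative accuracy $\hat\ell_i(\X)=\ell_i(\X)(1+\theta_r)$ of a single factor evaluation. So the first task is to pin down $m$ and $r$ for each cell type, and then everything reduces to bounding the resulting $\gamma_n$ terms via $\gamma_n\lesssim nu$ (Lemma~\ref{lemma:higham_theta_gamma}).

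First I would establish the product representation. For a boxed cell, the tensor-product basis function $\phi_{(j_1,\dots,j_d)}(\X)=\prod_{k=1}^{d}\ell_{j_k}(X_k)$ has each univariate factor $\ell_{j_k}$ a product of $p$ degree-one monomials in the single variable $X_k$; collecting the (exactly pre-computable) denominators into the weight $w_\phi$ puts $\phi_i$ into the form~\eqref{eq:polyval_product_repr} with $m=dp$ and each $\ell_i$ a plain difference of two floating-point numbers, hence evaluated with $r=O(1)$. For a simplex I would invoke the product formula of \cite{nicolaides1972class}: the degree-$p$ nodal function indexed by $\alpha$ with $|\alpha|=p$ equals $w_\phi\prod_{i=0}^{d}\prod_{j=0}^{\alpha_i-1}(p\lambda_i(\X)-j)$ with the known weight $w_\phi=\prod 1/(\alpha_i-j)$; this is~\eqref{eq:polyval_product_repr} with $m=p$ and each factor affine in the $d$ coordinates, hence $r=O(d)$. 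In either case $m(r+1)=O(dp)$ up to a $d$-dependent constant, and the number of local degrees of freedom satisfies $n_\phi\le c(d)p^d$, so that $n_\phi+m(r+1)+1\le c(d)p^d$ because $dp\le d\,p^d$.

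The remainder is substitution. Plugging $m(r+1)+1=O(dp)$ into Lemma~\ref{lemma:lagrange_poly_eval} and using $\gamma_n\lesssim nu$ for $u$ small gives $|\phi_i(\X)-\hat\phi_i(\X)|\le\gamma_{m(r+1)+1}|\phi_i(\X)|\lesssim dpu\,|\phi_i(\X)|$, the first bound. The derivative bound follows identically from Lemma~\ref{lemma:polyval_derivatives}, whose prefactor $\gamma_{(m-1)(r+1)}\le\gamma_{m(r+1)}$ is again $\lesssim dpu$, leaving $\kappa(\partial_s\phi_i)$ untouched. For the FE-function bounds I would substitute into Lemma~\ref{lemma:FEM_function_eval}: since $n_\phi+m(r+1)+1\le c(d)p^d$, the prefactors $\gamma_{n_\phi+m(r+1)+1}$ and $\gamma_{n_\phi+(m-1)(r+1)}$ are both $\lesssim p^d u$, which yields $\max_{\X\in\K}|z_h-\hat z_h|\lesssim p^d u\,\kappa(V_\K)\lVert\bm z\rVert_\infty$ and the analogous gradient bound with $\kappa(\partial_s\bm\Phi)\kappa(\partial_s V_\K)$. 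The hypothesis ``$u$ sufficiently small'' is exactly what is needed both to apply Lemma~\ref{lemma:higham_theta_gamma} at $n=O(p^d)$ and to meet the smallness conditions of the three input lemmas.

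The main obstacle I anticipate is not the arithmetic but the modelling step: verifying the hypothesis $\hat\ell_i(\X)=\ell_i(\X)(1+\theta_r)$ of Lemma~\ref{lemma:lagrange_poly_eval} with an $r$ that does not grow with $p$. For boxed cells this is immediate, since each $\ell_i$ is a single subtraction and the standard model~\eqref{eq:std_floating_point_error_model} gives $r=1$. For simplices the factors $p\lambda_i(\X)-j$ pass through a barycentric-coordinate evaluation, and one must check that this contributes only $O(d)$ roundings so that the polynomial degree never enters $r$; one must also justify that rolling the rational denominators $1/(\alpha_i-j)$ into $w_\phi$ is admissible under the ``known or pre-computed in exact arithmetic'' clause of Lemma~\ref{lemma:lagrange_poly_eval}. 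Once this point is settled, the four bounds follow by the direct substitution above.
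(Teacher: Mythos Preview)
Your proposal is correct and follows essentially the same approach as the paper: verify that Lagrange bases on boxed cells and simplices fit the product representation~\eqref{eq:polyval_product_repr} with $m=dp$, $r=O(1)$ and $m=p$, $r=O(d)$ respectively, note $n_\phi=O(p^d)$, and then substitute into Lemmas~\ref{lemma:lagrange_poly_eval}, \ref{lemma:polyval_derivatives}, and \ref{lemma:FEM_function_eval} using $\gamma_n\lesssim nu$. Your discussion of the product formulas and the modelling caveat on $r$ for simplices is more explicit than the paper's, but the argument is the same.
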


\begin{proof}
    Nodal Lagrange finite elements satisfy the assumptions of Lemmas
    \ref{lemma:lagrange_poly_eval}, \ref{lemma:polyval_derivatives}, and
    \ref{lemma:FEM_function_eval}, see, e.g., \cite[Section~2]{ciarlet2002finite}
    and \cite{nicolaides1972class}. The value of
    $n_{\phi}$ for these elements is well known: we have $n_{\phi}=(p+1)^d$ if
    $\K$ is a boxed cell and $n_{\phi}=\frac{1}{d!}\prod_{j=1}^d(p+j)$ if $\K$
    is a simplex. In both cases, $n_{\phi}=O(p^d)$. Similarly, it is well known
    that $m=pd$ for boxed cells and $m=p$ for simplices. For boxed cells we have
    $r=O(1)$ since each monomial is in the form $\ell_i(\X) =
    \check{x}_j-\check{y}_j$ for some $j\in\{1,\dots,d\}$ and some
    $\check{\bm{y}}\in \K$, and therefore no more than $O(1)$ roundoffs affect
    each monomial evaluation. For simplicies monomials include coordinates from
    all dimensions (cf.~\cite{nicolaides1972class}) so we have $r=O(d)$.
    Therefore, for both simplices and boxed cells we have $O(mr)=O(pd)$. We can
    consequently replace all $\gamma_{O(mr)}$ terms with $O(pdu)$ and all
    $\gamma_{O(n_{\phi}+mr)}$ terms with $O(p^du)$ and the corollary is proved.
\end{proof}

\section{Implementation details.}
\label{appendix_sec:implementation_details}

We found the overall bf16 intrinsics and compiler support to be
limited. To circumvent these limitations we made some implementation choices
which we highlight here for the sake of reproducibility:

\begin{itemize}
    \item We implement mixed-precision matrix and dot products by employing
        Intel intrinsics \cite{intel-intrinsics}. Indeed, compilers\footnote{Not
            just \texttt{g++} (GCC version 14.2.0). The same applies to all
            other compilers we tried: the \texttt{clang++} compiler (LLVM
            version 18.1.8) and the Intel \texttt{icpx} compiler (Intel OneAPI
        version 2024.2.0).} are unable to automatically use the mixed-precision
        fp32/bf16 AVX and AMX accelerators available in the processor.

    \item The processor and the Intel intrinsics \cite{intel-intrinsics} have
        limited vectorization support for bf16: the only operations available
        are casting to and from single precision and an fp32/bf16
        mixed-precision entrywise dot product. When a pure bf16 operation is
        needed we perform it by casting to and from fp16 which has full
        vectorization support.  The fp16 format has a higher precision, but a
        narrower range, therefore this casting increases the chances of
        underflow/overflow. We did not find this to be problematic in our
        experiments. 

    \item We use bit manipulations to cast fp16 to bf16 (with full subnormal
        support) and we employ Intel AVX512 intrinsics for casting from fp32 to
        bf16 \cite{intel-intrinsics}. The reason is that we found the compiled
        version of these operations to be either unnecessarily slow or bugged.

    \item In the AMX-bf16 and AVX512-bf16 kernels we use mixed-precision
        accelerators for the accumulation of matrix-matrix products into the
        local tensors. However, only the AMX kernels employ these accelerators
        for the FE function evaluations as well. In the AVX512-bf16 kernels
        these evaluation are performed entirely in (vectorized) single
        precision. The reason is that while AMX-bf16 accelerators are much
        faster than vectorized computations in any precision, the evaluations
        performed with AVX512-bf16 accelerators are sometimes slower than their
        fully single precision equivalents (for results and a discussion about
        this issue see Section \ref{subsec:MP_kernels_speedups}).

\end{itemize}

\end{document}